\documentclass{amsart}
\usepackage[utf8]{inputenc}
\usepackage{amssymb}
\usepackage{hyperref}
\usepackage[final]{showkeys}

\input xy
\xyoption{all}

%% Theorem styles
\theoremstyle{definition}
\newtheorem{mydef}{Definition}[section]
\newtheorem{lem}[mydef]{Lemma}
\newtheorem{thm}[mydef]{Theorem}
\newtheorem{cor}[mydef]{Corollary}

\newtheorem{question}[mydef]{Question}
\newtheorem{hypothesis}[mydef]{Hypothesis}

\newtheorem{defin}[mydef]{Definition}
\newtheorem{example}[mydef]{Example}
\newtheorem{remark}[mydef]{Remark}
\newtheorem{notation}[mydef]{Notation}
\newtheorem{fact}[mydef]{Fact}

%% For things like functions from alpha to C: \fct{\alpha}{\mathfrak{C}}

%% From Will
%The following commands allow for easy turning off of internal (i.e. to each other) footnotes.  The first displays them and the second hides them

%\newcommand{\footnotei}[1]{}

\def\x{\mathbf{x}}
\def\y{\mathbf{y}}

\newcommand{\sq}[2]{\sideset{^{#1}}{}{\operatorname{#2}}}
\newcommand{\ba}{\bar{a}}

\newcommand{\bx}{\bar{x}}
\newcommand{\by}{\bar{y}}

%% %%%%For
%% notational ease, I've screwed up the \bigg command for parantenses, so be careful

%%%%
\newcommand{\sea}{\mathfrak{C}}

\newcommand{\seq}[1]{\langle #1 \rangle}
\newcommand{\rest}{\upharpoonright}

\newcommand{\is}{\mathfrak{i}}

\newcommand{\insp}[1]{\is_{#1\text{-ns}}}

 \newcommand{\LS}{\operatorname{LS}}
\newcommand{\D}{\operatorname{D}}
%% \newcommand{\Ded}{\operatorname{Ded}}
%% \newcommand{\Def}{\operatorname{Def}}
%% \newcommand{\Deg}{\operatorname{Deg}}
%% \newcommand{\Dom}{\operatorname{Dom}}
%% \newcommand{\Rang}{\operatorname{Rang}}
%% \newcommand{\Range}{\operatorname{Range}}
%% \newcommand{\rang}{\operatorname{range}}
%% \newcommand{\Min}{\operatorname{Min}}
%% %\newcommand{\Max}{\operatorname{Max}}
%% \newcommand{\RM}{\operatorname{RM}}
%% \newcommand{\Mult}{\operatorname{Mult}}
%% \newcommand{\level}{\operatorname{level}}
%% \renewcommand{\P}{\mathcal{P}}
%% \newcommand{\Pm}{\mathcal{P}^{-}}
%% \newcommand{\tp}{\operatorname{tp}}
%% \newcommand{\atp}{\operatorname{atp}}
%% \newcommand{\gatp}{\operatorname{ga-tp}}
%% \newcommand{\Hanf}{\operatorname{Hanf}}
%% %\newcommand{\s}{\operatorname{S}}
%% \newcommand{\sent}{\operatorname{Sent}}
%% \newcommand{\gaS}{\operatorname{ga-S}}
%% \newcommand{\stp}{\operatorname{stp}}
%% \newcommand{\lstp}{\operatorname{lstp}}
%% \newcommand{\tps}{\operatorname{TPS}}
%% \newcommand{\orbit}{\operatorname{orbit}}
%% \newcommand{\otp}{\operatorname{otp}}
%% \newcommand{\ord}{\operatorname{ord}}
%% \newcommand{\ind}{\operatorname{ind}}
%% %\newcommand{\id}{\operatorname{id}}
 \newcommand{\Mod}{\operatorname{Mod}}
%% \newcommand{\Max}{\operatorname{Max}}
%% \newcommand{\mo}{\operatorname{Mod}}
%% \newcommand{\qf}{\operatorname{qf}}
%% %\newcommand{\l}{\operatorname{Mod}}
%% \newcommand{\conc}{\Hat{\ }}
%% \newcommand{\Aut}{\operatorname{Aut}}
%% \newcommand{\FV}{\operatorname{FV}}
%% \newcommand{\Gal}{\operatorname{Gal}}
\newcommand{\EM}{\operatorname{EM}}
%% \newcommand{\II}{\operatorname{I}}
%% %\newcommand{\IE}{\operatorname{IE}}
%% \newcommand{\Term}{\operatorname{Term}}
%% \newcommand{\Fml}{\operatorname{Fml}}
%% \newcommand{\Val}{\operatorname{Val}}
%% \newcommand{\AFml}{\operatorname{AFml}}
%% \newcommand{\UL}{\operatorname{UL}}
%% \newcommand{\VA}{\operatorname{V}}
%% \newcommand{\ID}{\operatorname{I}} 
%% \newcommand{\Sent}{\operatorname{Sent}}
%% \newcommand{\aut}[1]{\operatorname{Aut}_{#1}(\mathfrak{C})}
%% \newcommand{\autf}[1]{\operatorname{Autf}_{#1}(\mathfrak{C})}
%% \newcommand{\saut}[1]{\operatorname{SAut}_{#1}(\mathfrak{C})}
%% \newcommand{\sq}[2]{\sideset{^{#1}}{}{\operatorname{#2}}}
%% \newcommand{\av}{\operatorname{Av}}
\newcommand{\Av}{\operatorname{Av}}
\newcommand{\K}{\mathbf{K}}

 \renewcommand{\phi}{\varphi}
\newcommand{\initial}\lessdot
 \newcommand{\infinity}{\infty}
\def\?{?\vadjust
{\vbox to 0pt{\vskip-7pt\hbox to 1.1\hsize{\hfill\huge ?!}}}}

%%%%%%%%%%%%%%%%%%%%%%%%%%%%%%%%%%
%definition of the forking symbol%
%%%%%%%%%%%%%%%%%%%%%%%%%%%%%%%%%%

\newbox\noforkbox \newdimen\forklinewidth
\forklinewidth=0.3pt
\setbox0\hbox{$\textstyle\smile$}
\setbox1\hbox to \wd0{\hfil\vrule width \forklinewidth depth-2pt
 height 10pt \hfil}
\wd1=0 cm
\setbox\noforkbox\hbox{\lower 2pt\box1\lower 2pt\box0\relax}
\def\unionstick{\mathop{\copy\noforkbox}\limits}

\def\nonfork_#1{\unionstick_{\textstyle #1}}

\setbox0\hbox{$\textstyle\smile$}
\setbox1\hbox to \wd0{\hfil{\sl /\/}\hfil}
\setbox2\hbox to \wd0{\hfil\vrule height 10pt depth -2pt width
               \forklinewidth\hfil}
\wd1=0 cm
\wd2=0 cm
\newbox\doesforkbox
\setbox\doesforkbox\hbox{\lower 2pt\box1 \lower 2pt\box2\lower2pt\box0\relax}
\def\nunionstick{\mathop{\copy\doesforkbox}\limits}

\def\fork_#1{\nunionstick_{\textstyle #1}}

%% Orderings

%% AEC less than. Depending on journal conventions, coauthors, etc. might change.

\newcommand{\leap}[1]{\le_{#1}}
\newcommand{\ltap}[1]{<_{#1}}

\newcommand{\geap}[1]{\ge_{#1}}

\newcommand{\lta}{\ltap{\K}}
\newcommand{\lea}{\leap{\K}}

\newcommand{\gea}{\geap{\K}}

%% Universal extension

%% Limit extension: first parameter is the size of the models, second is the length of the chain

%% ``Abstract universal (previously good) ordering''

%% AEC less than minus (for embedded sets)

%% Elementary substructure

\def\lee{\preceq}

%% Nonforking symbol.
\newbox\noforkbox \newdimen\forklinewidth
\forklinewidth=0.3pt \setbox0\hbox{$\textstyle\smile$}
\setbox1\hbox to \wd0{\hfil\vrule width \forklinewidth depth-2pt
 height 10pt \hfil}
\wd1=0 cm \setbox\noforkbox\hbox{\lower 2pt\box1\lower
2pt\box0\relax}
\def\unionstick{\mathop{\copy\noforkbox}\limits}

\setbox0\hbox{$\textstyle\smile$}
%the negation is box 1; the original def had "\lower 2pt\box1 " in \doesforkbox
\setbox1\hbox to \wd0{\hfil{\sl /\/}\hfil} \setbox2\hbox to
\wd0{\hfil\vrule height 10pt depth -2pt width
               \forklinewidth\hfil}
\wd1=0 cm \wd2=0 cm
\newbox\doesforkbox
\setbox\doesforkbox\hbox{\lower 0pt\box1 \lower
2pt\box2\lower2pt\box0\relax}

\def\nunionstick{\mathop{\copy\doesforkbox}\limits}

%\newcommand{\fork}{\nunionstick}

%% Usage: \nfs{base}{left}{right}{ambiant model}

%% Natural closure of coheir.

%% Non splitting/ Non explicit splitting

%% Closure

%% Independence relation number 1 and 2
\def\1nf{\unionstick^{(1)}}
\def\2nf{\unionstick^{(2)}}

%% Types
%\newcommand{\tp}{\text{tp}}
\newcommand{\gtp}{\text{gtp}}

\newcommand{\gS}{\text{gS}}

% Boldface I
\newcommand{\BI}{\mathbf{I}}
%% Boldface J
\newcommand{\BJ}{\mathbf{J}}
%% Boldface I and J hat

%\newcommand{\Av}{\text{Av}}

%% Set local character (``kappa bar'')

%% Chain local character
\newcommand{\clc}[1]{\kappa_{#1}}

%% Chain local character star

%% Continuity cardinal

%% nf chain cardinal

%% Hanf number

\newcommand{\hanfe}[1]{\beth_{\left(2^{#1}\right)^+}}

%% Various

\newcommand{\Ll}{\mathbb{L}}

\newcommand{\Ksatp}[1]{\K^{#1\text{-sat}}}
\newcommand{\Ksatpnobf}[1]{K^{#1\text{-sat}}}

\newcommand{\Kmhp}[1]{\K^{#1\text{-mh}}}
\newcommand{\Kmh}[1]{\Kmhp{\lambda}}

%\newcommand{\EM}{\text{EM}}
%\newcommand{\Mod}{\text{Mod}}

%% K^up

%% Good minus

%% Superstable^n, superstable^n.

%% Superstable^+

\title[Definitions of superstability in tame AECs]{Equivalent definitions of superstability in tame abstract elementary classes}
%% No indentation at the start of each paragraph
\parindent 0pt
%% Gap between paragraphs
\parskip 5pt

%% Include only the sections and not the subsections in the table of content
\setcounter{tocdepth}{1}

\author{Rami Grossberg}
\email{rami@cmu.edu}
\urladdr{http://math.cmu.edu/\textasciitilde rami}
\address{Department of Mathematical Sciences, Carnegie Mellon University, Pittsburgh, Pennsylvania, USA}

\author{Sebastien Vasey}
\thanks{This material is based upon work done while the second author was supported by the Swiss National Science Foundation under Grant No.\ 155136.}
\email{sebv@cmu.edu}
\urladdr{http://math.cmu.edu/\textasciitilde svasey/}
\address{Department of Mathematical Sciences, Carnegie Mellon University, Pittsburgh, Pennsylvania, USA}

\date{\today \\
AMS 2010 Subject Classification: Primary 03C48. Secondary: 03C45, 03C52, 03C55, 03C75, 03E55.}
\keywords{abstract elementary classes; superstability; tameness; independence; classification theory; superlimit; saturated; solvability; good frame; limit model}

\begin{document}

\begin{abstract}
  In the context of abstract elementary classes (AECs) with a monster model, several possible definitions of superstability have appeared in the literature. Among them are no long splitting chains, uniqueness of limit models, and solvability. Under the assumption that the class is tame and stable, we show that (asymptotically) no long splitting chains implies solvability and uniqueness of limit models implies no long splitting chains. Using known implications, we can then conclude that all the previously-mentioned definitions (and more) are equivalent:  

\begin{cor}\label{abstract-thm}
  Let $\K$ be a tame AEC with a monster model. Assume that $\K$ is stable in a proper class of cardinals. The following are equivalent:

  \begin{enumerate}
    \item\label{abstract-1} For all high-enough $\lambda$, $\K$ has no long splitting chains.
    \item\label{abstract-2} For all high-enough $\lambda$, there exists a good $\lambda$-frame on a skeleton of $\K_\lambda$.
    \item\label{abstract-3} For all high-enough $\lambda$, $\K$ has a unique limit model of cardinality $\lambda$.
    \item\label{abstract-4} For all high-enough $\lambda$, $\K$ has a superlimit model of cardinality $\lambda$.
    \item\label{abstract-5} For all high-enough $\lambda$, the union of any increasing chain of $\lambda$-saturated models is $\lambda$-saturated.
    \item\label{abstract-6} There exists $\mu$ such that for all high-enough $\lambda$, $\K$ is $(\lambda, \mu)$-solvable.
  \end{enumerate}
\end{cor}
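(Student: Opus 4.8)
The plan is to assemble a cycle of implications, most of which are already available in the literature, and to use the two substantive theorems proved in this paper to close the remaining gaps. Throughout, the hypothesis that $\K$ is tame and stable in a proper class of cardinals guarantees (via tameness) that $\K$ is stable on a tail of cardinals, so that each of the relevant prior results applies for all high-enough $\lambda$. The two genuinely new inputs are the implication (\ref{abstract-1}) $\Rightarrow$ (\ref{abstract-6}), that (asymptotic) no long splitting chains yields solvability, and the implication (\ref{abstract-3}) $\Rightarrow$ (\ref{abstract-1}), that uniqueness of limit models yields no long splitting chains; everything else is bookkeeping with known results.

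First I would record the known implications and close the central loop. Solvability implies no long splitting chains, giving (\ref{abstract-6}) $\Rightarrow$ (\ref{abstract-1}); combined with the new implication (\ref{abstract-1}) $\Rightarrow$ (\ref{abstract-6}), this already yields (\ref{abstract-1}) $\Leftrightarrow$ (\ref{abstract-6}). Under tameness, no long splitting chains produces a good $\lambda$-frame on a skeleton of $\K_\lambda$ for all high-enough $\lambda$, giving (\ref{abstract-1}) $\Rightarrow$ (\ref{abstract-2}); and the existence of such a frame yields uniqueness of limit models of cardinality $\lambda$ via symmetry, giving (\ref{abstract-2}) $\Rightarrow$ (\ref{abstract-3}). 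The new theorem (\ref{abstract-3}) $\Rightarrow$ (\ref{abstract-1}) then closes the loop, establishing (\ref{abstract-1}) $\Leftrightarrow$ (\ref{abstract-2}) $\Leftrightarrow$ (\ref{abstract-3}) $\Leftrightarrow$ (\ref{abstract-6}).

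Next I would fold in conditions (\ref{abstract-4}) and (\ref{abstract-5}). No long splitting chains implies that the union of any increasing chain of $\lambda$-saturated models is $\lambda$-saturated, giving (\ref{abstract-1}) $\Rightarrow$ (\ref{abstract-5}); this chain condition, together with stability, makes the saturated model of cardinality $\lambda$ a superlimit, giving (\ref{abstract-5}) $\Rightarrow$ (\ref{abstract-4}); and the existence of a superlimit forces all limit models of cardinality $\lambda$ to be isomorphic to it, hence to each other, giving (\ref{abstract-4}) $\Rightarrow$ (\ref{abstract-3}). Since (\ref{abstract-3}) already lies in the established equivalence class, all six conditions coincide.

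The routine obstacle in the corollary itself is calibrating the asymptotic thresholds: each cited result holds only above some cardinal bound (Hanf numbers and local stability cardinals), so I must verify that the various ``for all high-enough $\lambda$'' clauses, together with the single existential $\mu$ appearing in (\ref{abstract-6}), can be chosen coherently enough that the implications compose into one cycle. The genuine difficulty lies entirely in the two new theorems; once they are in hand, the corollary reduces to arranging the implications as above.
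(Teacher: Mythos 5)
There is a genuine gap, and it is precisely the trap the paper warns about in its introduction (``one has to be careful about where the class is stable''). Your opening claim --- that tameness plus stability in a proper class of cardinals guarantees stability on a \emph{tail} of cardinals --- is false. Under tameness, stability in one cardinal gives stability in every $\lambda$ with $\lambda = \lambda^{\mu}$ for a fixed $\mu$ (Fact \ref{stab-spectrum}); this is an unbounded class of cardinals, not a tail. Any strictly stable first-order theory is a counterexample: it is stable in a proper class of cardinals, yet unstable in arbitrarily large cardinals of countable cofinality. Indeed, ``stable on a tail'' is essentially condition (\ref{sc0-7}) of Corollary \ref{main-cor}, and whether that condition implies the remaining ones was open when the paper was written (Remark \ref{stab-ss-rmk}); assuming it follows from the ambient hypotheses begs the question.

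This false claim is load-bearing in your second block. The implication (\ref{abstract-5}) $\Rightarrow$ (\ref{abstract-4}) needs a saturated model of cardinality $\lambda$, hence stability in $\lambda$; the implication (\ref{abstract-4}) $\Rightarrow$ (\ref{abstract-3}) invokes the local implications of Fact \ref{local-implications}, every clause of which carries the hypothesis that $\K$ is stable in $\lambda$ (the paper even records in Question \ref{superlimit-question} that it is open whether a superlimit \emph{without} stability suffices). Under (\ref{abstract-5}) or (\ref{abstract-4}) alone you control stability only at the unboundedly many cardinals supplied by Fact \ref{stab-spectrum}, so your tail-to-tail implications cannot be run as written: (\ref{abstract-5}) yields superlimits only at high-enough \emph{stability} cardinals, not at all high-enough cardinals. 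The repair is the paper's route: downgrade to single-cardinal statements, pick one high-enough regular stability cardinal $\lambda$, show that (\ref{abstract-3}), (\ref{abstract-4}), or (\ref{abstract-5}) yields at $\lambda$ the auxiliary condition $(\ref{sc0-3})^\ast$ (a saturated model of cardinality $\lambda$ exists and every limit model of cardinality $\lambda$ is sufficiently saturated), apply Theorem \ref{ss-from-chainsat} to get no long splitting chains at that single $\lambda$, and only then propagate to a tail via the upward transfer of Fact \ref{ss-stable} --- which needs both stability \emph{and} no long splitting chains at the starting cardinal, exactly what your hypotheses alone do not supply. With that restructuring (the $(\ell)^-$ conditions of Corollary \ref{main-cor-unbounded}), your cycle of implications coincides with the paper's; the architecture is right, but the stability bookkeeping you dismissed as routine is where the actual work lies.
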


This gives evidence that there is a clear notion of superstability in the framework of tame AECs with a monster model.  

\end{abstract}

\maketitle

\tableofcontents

\section{Introduction}

In the context of classification theory for abstract elementary classes (AECs), a notion analogous to the first-order notion of \emph{stability} exists: let us say that an AEC $\K$ is \emph{stable in $\lambda$} if $\K$ has at most $\lambda$-many Galois types over every model of cardinality $\lambda$ (a justification for this definition is Fact \ref{stab-spectrum}, showing that it is equivalent, under tameness, to failure of the order property). However it has been unclear what a parallel notion to superstability might be.  Recall that for first-order theories we have:

\begin{fact}\label{fo-superstab} Let $T$ be a first-order complete theory. The following are equivalent:
\begin{enumerate}

\item $T$ is stable in every cardinal $\lambda \ge 2^{|T|}$.

\item For all infinite cardinals $\lambda$, the union of an increasing chain of $\lambda$-saturated models is $\lambda$-saturated.

\item $\kappa(T)=\aleph_0$ and $T$ is stable.

\item $T$ has a saturated model of cardinality $\lambda$ for every $\lambda\geq 2^{|T|}$.

\item $T$ is stable and $\D^n[\bx=\bx,L (T),\infinity]<\infinity$.

\item There does not exists a set of formulas $\Phi=\{\phi_n(\bx;\by_n)\mid n<\omega\}$ such that $\Phi$ can be used to code the structure $(\omega^{\leq\omega},<,<_{lex})$

%\item
\end{enumerate}

\end{fact}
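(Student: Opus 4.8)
The plan is to treat all six conditions as characterizations of a single invariant, the least cardinal $\kappa(T)$ for which forking has $\kappa(T)$-local character; concretely, I would show that for stable $T$ each condition is equivalent to $\kappa(T) = \aleph_0$, which is the content of condition (3). The backbone is the stability spectrum theorem: a stable $T$ is stable in $\lambda$ exactly when $\lambda \ge 2^{|T|}$ and $\lambda^{<\kappa(T)} = \lambda$. Granting this, (1) $\Leftrightarrow$ (3) is immediate, since $\kappa(T) = \aleph_0$ forces $\lambda^{<\aleph_0} = \lambda$ for every infinite $\lambda$, while $\kappa(T) > \aleph_0$ lets one pick $\lambda \ge 2^{|T|}$ with $\cf{\lambda} < \kappa(T)$ (say a suitable $\beth_\delta$) so that $\lambda^{<\kappa(T)} > \lambda$ and stability fails at $\lambda$.

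I would next dispose of the two syntactic reformulations. For (3) $\Leftrightarrow$ (5), the point is that $\D^n[\bx = \bx, L(T), \infinity]$ is an ordinal (i.e. $< \infinity$) precisely when there is no infinite forking chain: an evaluation of the rank as $\infinity$ produces, by repeated splitting, forking chains of every finite length and hence a failure of $\aleph_0$-local character, and conversely an infinite forking chain drives the rank up to $\infinity$. For (3) $\Leftrightarrow$ (6), I would unwind an infinite forking chain (witnessing $\kappa(T) > \aleph_0$) into a system of formulas $\{\phi_n(\bx; \by_n) \mid n < \omega\}$ together with parameters indexed by $\omega^{<\omega}$ whose incidence and lexicographic comparisons are defined by the $\phi_n$, thereby coding $(\omega^{\le\omega}, <, <_{lex})$; conversely, such a coding produces, over a suitable parameter set of size $\lambda$ with $\cf{\lambda} = \aleph_0$, strictly more than $\lambda$ complete types, contradicting the spectrum used in (1). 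The fiddly part here is arranging the forking tree into a faithful interpretation respecting both the tree order and $<_{lex}$ simultaneously.

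Finally I would link the two model-theoretic conditions to (3). For (3) $\Rightarrow$ (2), given an increasing chain $\langle M_i : i < \delta\rangle$ of $\lambda$-saturated models with union $M$ and a type $p$ over $A \subseteq M$ of size $< \lambda$, I would use $\kappa(T) = \aleph_0$ to base $p$ on a finite $B \subseteq A$ over which it does not fork; when $\cf{\delta} \ge \lambda$ the set $A$ already lies in a single $M_i$ and saturation finishes the job, and the genuine work is the case $\cf{\delta} < \lambda$, where $B$ still lies in some $M_{i_0}$ and one must manufacture a realization inside the union by exploiting stationarity of nonforking extensions over the ($a$-saturated) $M_{i_0}$. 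Condition (2) then powers the chain construction for (3) $\Rightarrow$ (4): stability in $\lambda$ together with closure of $\lambda$-saturation under unions lets one build a $\lambda$-saturated model of size $\lambda$ — hence, by the type count, a saturated one — for every $\lambda \ge 2^{|T|}$. To return to (3) I would argue contrapositively: when $\kappa(T) > \aleph_0$ one works at a cardinal $\lambda$ of cofinality $\aleph_0$ where stability already fails, arranging an increasing chain of $\lambda$-saturated models whose union omits a type (defeating (2)) and precluding any saturated model of size $\lambda$ (defeating (4)). I expect the principal obstacle to be exactly the small-cofinality half of (2): converting the hypothesis $\kappa(T) > \aleph_0$ into an explicit type omitted at the union of a cleverly chosen chain, which is the heart of why superstability is what governs closure of saturation under unions.
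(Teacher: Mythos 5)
Note first that the paper does not actually prove this Fact: it is quoted as classical, with $(1)\implies(2)$ and $(1)\iff(\ell)$ for $\ell\in\{3,4,5,6\}$ attributed to Shelah's book \cite{shelahfobook} and $(2)\implies(6)$ to Albert--Grossberg \cite{agchains}. Your overall architecture --- reduce everything to $\kappa(T)=\aleph_0$ via the stability spectrum theorem --- is exactly the architecture of those classical proofs, so the approach is not wrong. The problem is that your proposal defers precisely the steps that are the substance of the cited theorems, so as a proof it has genuine gaps. The clearest one concerns condition (2). In the direction $(3)\Rightarrow(2)$, for the case $\cf{\delta}<\lambda$, ``exploiting stationarity of nonforking extensions over the a-saturated $M_{i_0}$'' does not by itself produce a realization: the parameter set $A$ is spread cofinally through the chain, so it lies in no single $M_i$, and $\lambda$-saturation of an individual $M_i$ only realizes types over subsets \emph{of $M_i$}. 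What you must manufacture is $c\in\bigcup_{i}M_i$ with the correct strong type over the finite base $B$ \emph{and} with $\operatorname{tp}(c/A)$ nonforking over $B$; stationarity alone does not give this, and the classical arguments (Shelah III.3.11, Harnik) need an additional device --- the finite equivalence relation theorem plus a second application of local character along the chain, or isolation/average-type machinery. (A further wrinkle: for $\lambda=\aleph_0$ the models are only $\aleph_0$-saturated, which does not imply a-saturation, so even the tool you name is not available there.) In the converse direction $\neg(3)\Rightarrow\neg(2)$ you explicitly flag the construction of a chain of $\lambda$-saturated models with non-saturated union as an unresolved obstacle; that construction (realize an $\omega$-dividing chain, build $\lambda$-saturated $M_n$ independent from the full parameter set over the $n$-th level, and show the limit type is omitted in the union) is the heart of the equivalence, not a detail to be filled in later.

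The second gap is the unstable case. Your plan is stated ``for stable $T$,'' but conditions (2), (4) and (6) do not presuppose stability, so the equivalence additionally requires showing that every \emph{unstable} theory fails each of them, and this does not follow from your $\kappa(T)$ dichotomy. For unstable $T$ the nonforking calculus (existence, symmetry) that a bad-chain construction would invoke is unavailable; defeating (4) requires the classical fact that a saturated model of power $\lambda$ forces $T$ to be stable in $\lambda$ or $\lambda^{<\lambda}=\lambda$ (after which one chooses $\lambda\ge 2^{|T|}$ of countable cofinality); and the coding of $(\omega^{\leq\omega},<,<_{lex})$ for unstable $T$ comes from the order property rather than from an unwound forking chain. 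These are genuinely different arguments from their stable-theory counterparts, and your proposal as written does not supply them.
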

$(1) \implies (2)$ and $(1) \iff (\ell)$ for $\ell \in \{3, 4, 5, 6\}$ all appear in Shelah's book \cite{shelahfobook}. Albert and Grossberg \cite[13.2]{agchains} established $(2)\implies (6)$.

In the last 30 years, in the context of classification theory for non elementary classes,  several notions that generalize that of first-order superstablity have been considered.  See papers by Grossberg, Shelah, VanDieren, Vasey and Villaveces:  \cite{grsh238,gr88}, \cite{sh394}, \cite{shvi635}, \cite{vandierennomax, nomaxerrata}, \cite{gvv-mlq}, \cite{ss-tame-jsl, indep-aec-apal}. Reasons for developping a superstability theory in the non-elementary setup include the aesthetic appeal (guided by motivation from the first-order case) and recent applications such as Shelah's eventual categoricity conjecture in universal classes \cite{ap-universal-v10, categ-universal-2-v3-toappear} or the fact that (in an AEC with a monster model) the model in a categoricity cardinal is saturated \cite{categ-saturated-v2}.

In \cite[p.~267]{sh394} Shelah states that part of the program of classification theory for AECs is to show that all the various notions of first-order saturation (limit, superlimit, or model-homogeneous, see Section \ref{sat-def-subsec}) are equivalent under the assumption of  superstablity. A possible definition of superstability is \emph{solvability} (see Definition \ref{solvability-def}), which appears in the introduction to \cite{shelahaecbook} and is hailed as a true counterpart to first-order superstability. Full justification is delayed to \cite{sh842} but \cite[Chapter IV]{shelahaecbook} already uses it. Other definitions of superstability analogous to the ones in Fact \ref{fo-superstab} can also be formulated. The main result of this paper is to show that, at least in tame AECs with a monster model, several definitions of superstability that previously appeared in the literature are equivalent (see the preliminaries for precise definitions of some of the concepts appearing below). Many of the implications have already been proven in earlier papers, but here we complete the loop by proving two theorems. Before stating them, some notation will be helpful:

\begin{notation}[4.24(5) in \cite{baldwinbook09}]\label{hanf-notation}
  Given a fixed AEC $\K$, set $H_1 := \hanfe{\LS (\K)}$.
\end{notation}

\textbf{Theorem \ref{ss-from-chainsat}.}
Let $\K$ be an $\LS (\K)$-tame AEC with a monster model. There exists $\chi < H_1$ such that for any $\mu \ge \chi$, if $\K$ is stable in $\mu$, there is a saturated model of cardinality $\mu$, and every limit model of cardinality $\mu$ is $\chi$-saturated, then $\K$ has no long splitting chains in $\mu$.

\textbf{Theorem \ref{strong-solvable-thm}.}
Let $\K$ be an $\LS (\K)$-tame AEC with a monster model. There exists $\chi < H_1$ such that for any $\mu \ge \chi$, if $\K$ is stable in $\mu$ and has no long splitting chains in $\mu$ then $\K$ is uniformly $(\mu', \mu')$-solvable, where $\mu' := \left(\beth_{\omega + 2} (\mu)\right)^+$.

These two theorems prove (\ref{sc0-3}) implies (\ref{sc0-1}) and (\ref{sc0-1}) implies (\ref{sc0-6}) of our main corollary, proven in detail after the proof of Corollary \ref{main-cor-unbounded}.

\begin{cor}[Main Corollary]\label{main-cor}
  Let $\K$ be a $\LS (\K)$-tame AEC with a monster model. Assume that $\K$ is stable in some cardinal greater than or equal to $\LS (\K)$. The following are equivalent:

  \begin{enumerate}
    \item\label{sc0-1} There exists $\mu_1 < H_1$ such that for every $\lambda\geq\mu_1$, $\K$ has no long splitting chains in $\lambda$.
%    \item\label{sc0-15} For unboundedly many $\kappa$, there exists a $\lambda$ such that the $U$-rank induced by $(<\kappa)$-satisfiability (see \cite[Definition 7.2]{bg-v7}) is bounded on types over $\lambda$-saturated models.
    \item\label{sc0-2} There exists $\mu_2 < H_1$ such that for every $\lambda\geq\mu_2$, there is a good $\lambda$-frame on a skeleton of $\K_\lambda$ (see Section \ref{skeleton-sec}).
    \item\label{sc0-3} There exists $\mu_3 < H_1$ such that for every $\lambda\geq\mu_3$, $\K$ has a unique limit model of cardinality $\lambda$.
    \item\label{sc0-4} There exists $\mu_4 < H_1$ such that for every $\lambda\geq\mu_4$, $\K$ has a superlimit model of cardinality $\lambda$.
    \item\label{sc0-5} There exists $\mu_5 < H_1$ such that for every $\lambda\geq\mu_5$, the union of any increasing chain of $\lambda$-saturated models is $\lambda$-saturated.
    \item\label{sc0-6} There exists $\mu_6 < H_1$ such that for every $\lambda \geq \mu_6$, $\K$ is $(\lambda, \mu_6)$-solvable.
  \end{enumerate}

  Moreover any of the above conditions also imply:

  \begin{enumerate}
    \setcounter{enumi}{6}
    \item\label{sc0-7} There exists $\mu_7 < H_1$ such that for every $\lambda \geq \mu_7$, $\K$ is stable in $\lambda$.
  \end{enumerate}
\end{cor}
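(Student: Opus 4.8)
The plan is to obtain the corollary by closing a cycle of implications among (\ref{sc0-1})--(\ref{sc0-6}), in which the two links (\ref{sc0-3}) $\Rightarrow$ (\ref{sc0-1}) and (\ref{sc0-1}) $\Rightarrow$ (\ref{sc0-6}) come from Theorems \ref{ss-from-chainsat} and \ref{strong-solvable-thm}, and the remaining links are quoted from the literature. Before running the cycle I would first use the stability spectrum for tame AECs (Fact \ref{stab-spectrum}) to record that the standing hypothesis of stability in one cardinal $\geq \LS(\K)$ already pins down a first stability cardinal below $H_1$; I would also note that each of (\ref{sc0-3}), (\ref{sc0-4}), (\ref{sc0-5}) is non-vacuous only when $\K$ is stable in the cardinals concerned (limit, superlimit and saturated models of size $\lambda$ exist only under stability in $\lambda$), while (\ref{sc0-1}), (\ref{sc0-2}) and (\ref{sc0-6}) each yield stability on a tail by known results. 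Consequently stability is available at every cardinal where I need to invoke one of the two theorems, and the implication to (\ref{sc0-7}) falls out at the same time, since superstability sharpens the spectrum to stability in all $\lambda$ above a threshold $< H_1$.

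For the core block I would argue (\ref{sc0-1}) $\Rightarrow$ (\ref{sc0-2}) $\Rightarrow$ (\ref{sc0-3}) $\Rightarrow$ (\ref{sc0-1}). The first implication is the construction of a good $\lambda$-frame on a skeleton of $\K_\lambda$ out of no long splitting chains (\cite{indep-aec-apal, ss-tame-jsl}); the second uses that a good frame carries symmetry and hence forces uniqueness of limit models (\cite{vandierennomax}). The return implication (\ref{sc0-3}) $\Rightarrow$ (\ref{sc0-1}) is Theorem \ref{ss-from-chainsat}, but to apply it at a given $\mu$ I must first upgrade ``unique limit model'' to ``every limit model of size $\mu$ is $\chi$-saturated''. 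I would get this from the known equivalence between uniqueness of limit models and symmetry for non-splitting (VanDieren \cite{vandierennomax, nomaxerrata}), since symmetry makes limit models saturated; the two remaining hypotheses of the theorem, stability in $\mu$ and existence of a saturated model of size $\mu$, are supplied by the spectrum discussion above (the unique limit model being itself the saturated model). Folding in (\ref{sc0-4}) and (\ref{sc0-5}) is then light: no long splitting chains gives that the union of an increasing chain of saturated models is saturated (see, e.g., \cite{ss-tame-jsl}), so (\ref{sc0-1}) $\Rightarrow$ (\ref{sc0-5}); and given existence of a saturated model of size $\lambda$, the condition (\ref{sc0-5}) is exactly what makes that model superlimit and makes every limit model saturated, yielding (\ref{sc0-5}) $\Leftrightarrow$ (\ref{sc0-4}) and (\ref{sc0-5}) $\Rightarrow$ (\ref{sc0-3}).

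It remains to attach (\ref{sc0-6}). The implication (\ref{sc0-1}) $\Rightarrow$ (\ref{sc0-6}) is Theorem \ref{strong-solvable-thm}: applying it once at the base cardinal $\chi$ produces uniform $(\mu', \mu')$-solvability with $\mu' = (\beth_{\omega+2}(\chi))^+ < H_1$, and since the same witnessing template remains proper for all larger linear orders this repackages as a single $\mu_6 := \mu'$ with $\K$ being $(\lambda, \mu_6)$-solvable for every $\lambda \geq \mu_6$. The converse (\ref{sc0-6}) $\Rightarrow$ (\ref{sc0-1}), that solvability implies no long splitting chains, is already known (\cite{shelahaecbook, sh842}). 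The step I expect to demand the most care is not any single implication but the uniform bookkeeping of thresholds: every quoted result and both theorems are stated at a single cardinal (or with their own $\chi < H_1$), so I must verify that the thresholds compose to one $\mu_i < H_1$ in each clause, that stability genuinely persists on a tail so as to feed Theorem \ref{ss-from-chainsat} cofinally, and---most delicately---that the $\chi$-saturation hypothesis of that theorem really is extractable from (\ref{sc0-3}) alone rather than from the stronger chain condition (\ref{sc0-5}).
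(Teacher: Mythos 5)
Your overall architecture matches the paper's: close a cycle in which Theorems \ref{ss-from-chainsat} and \ref{strong-solvable-thm} supply $(\ref{sc0-3}) \Rightarrow (\ref{sc0-1})$ and $(\ref{sc0-1}) \Rightarrow (\ref{sc0-6})$, the ZFC Shelah--Villaveces theorem (Fact \ref{ns-lc-fact}) supplies $(\ref{sc0-6}) \Rightarrow (\ref{sc0-1})$, and Facts \ref{good-frame-existence} and \ref{good-frame-uq-limit} handle $(\ref{sc0-1}) \Rightarrow (\ref{sc0-2}) \Rightarrow (\ref{sc0-3})$. (The paper actually proves a stronger single-cardinal version, Corollary \ref{main-cor-unbounded}, and then derives the tail version; you work with the tail version directly, which is fine.) However, the step you yourself flag as the most delicate --- extracting the $\chi$-saturation hypothesis of Theorem \ref{ss-from-chainsat} from $(\ref{sc0-3})$ --- is justified circularly in your proposal. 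VanDieren's equivalence between uniqueness of limit models and symmetry for non-splitting is proved under the standing hypothesis of $\mu$-superstability, i.e.\ stability \emph{plus no long splitting chains} in the relevant cardinal; that is exactly condition $(\ref{sc0-1})$, which is what this leg of the cycle is supposed to establish. (The works you cite, \cite{vandierennomax, nomaxerrata}, are moreover the no-maximal-models papers, not the symmetry papers.) No symmetry is needed: uniqueness of limit models of cardinality $\lambda$ implies every limit model of cardinality $\lambda$ is saturated by an elementary argument --- for each regular $\chi \in (\LS(\K), \lambda]$ the $(\lambda,\chi)$-limit model is $\chi$-saturated, and all limit models of cardinality $\lambda$ are isomorphic --- which is Fact \ref{local-implications}(\ref{local-3}) and is precisely how the paper proceeds (its Subclaim 3). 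Stability and the existence of a saturated model then come for free, since the existence of a limit model in $\lambda$ implies stability in $\lambda$ and the (unique) limit model is itself saturated.

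A second, smaller soft spot: your claim that $(\ref{sc0-5})$ is ``non-vacuous only when $\K$ is stable in the cardinals concerned'' is backwards. Condition $(\ref{sc0-5})$ is a conditional statement that holds vacuously when there are no $\lambda$-saturated models, so it does \emph{not} yield stability; this is exactly why the paper introduces the auxiliary condition $(\ref{sc0-3})^\ast$ and evaluates matters at a \emph{regular stability cardinal} $\lambda \ge \mu_5$ (available below $H_1$ by Fact \ref{stab-spectrum}) before building limit chains of saturated models. Your claim $(\ref{sc0-5}) \Leftrightarrow (\ref{sc0-4})$ also glosses over the mismatch between Fact \ref{local-implications}(\ref{local-5}), which concerns chains of saturated models inside $\K_\lambda$, and condition $(\ref{sc0-5})$, which concerns $\lambda$-saturated models of arbitrary cardinality; the paper avoids ever needing $(\ref{sc0-4}) \Rightarrow (\ref{sc0-5})$ by routing $(\ref{sc0-4}) \Rightarrow (\ref{sc0-3})$ through Fact \ref{local-implications}(\ref{local-4}). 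Both issues are repairable using the paper's elementary facts, but as written the central leg of your cycle assumes what it is meant to prove.
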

\begin{remark}
  The main corollary has a global assumption of stability (in some cardinal). While stability is implied by some of the equivalent conditions (e.g.\ by (\ref{sc0-2}) or (\ref{sc0-6})) other conditions may be vacuously true if stability fails (e.g.\ (\ref{sc0-1})). Thus in order to simplify the exposition we just require stability outright.
\end{remark}
\begin{remark}
  In the context of the main corollary, if $\mu_1 \ge \LS (\K)$ is such that $\K$ is stable in $\mu_1$ and has no long splitting chains in $\mu_1$, then for any $\lambda \ge \mu_1$, $\K$ is stable in $\lambda$ and has no long splitting chains in $\lambda$ (see Fact \ref{ss-stable}). In other words, superstability defined in terms of no long splitting chains transfers up.
\end{remark}
\begin{remark}
  In (\ref{sc0-3}), one can also require the following strong version of uniqueness of limit models:  if $M_0, M_1, M_2 \in \K_{\lambda}$ and both $M_1$ and $M_2$ are limit over $M_0$, then $M_1 \cong_{M_0} M_2$ (i.e.\ the isomorphism fixes the base). This is implied by (\ref{sc0-2}): see Fact \ref{good-frame-uq-limit}.
\end{remark}
\begin{remark}\label{stab-ss-rmk}
  At the time this paper was first circulated (July 2015), we did not know whether (\ref{sc0-7}) implied the other conditions. This has recently been proven by the second author \cite{stab-spec-v4}.
\end{remark}

Note that in Corollary \ref{main-cor}, we can let $\mu$ be the maximum of the $\mu_\ell$'s and then each property will hold above $\mu$. Interestingly however, the proof of Corollary \ref{main-cor} does \emph{not} tell us that the \emph{least} cardinals $\mu_\ell$ where the corresponding properties holds are all equal. In fact, it uses tameness heavily to move from one cardinal to the next and uses e.g.\ that one equivalent definition holds below $\lambda$ to prove that another definition holds at $\lambda$. Showing equivalence of these definitions cardinal by cardinals, or even just showing that the least cardinals where the properties hold are all equal seems much harder. We also show that we can ask only for each property to hold in \emph{a single high-enough} cardinals below $H_1$ (but again the least such cardinal may not be the same for each property, see Corollary \ref{main-cor-unbounded}). In general, we suspect that the problem of computing the minimal value of the cardinals $\mu_\ell$ could play a role similar to the computation of the first stability cardinal for a first-order theory (which led to the development of forking, see e.g.\ the introduction of \cite{primer}).

We discuss earlier work. Shelah \cite[Chapter II]{shelahaecbook} introduced good $\lambda$-frames (a local axiomatization of first-order forking in a superstable theory, see more in Section \ref{good-frame-subsec}) and attempts to develop a theory of superstability in this context. He proves for example the uniqueness of limit models (see Fact \ref{good-frame-uq-limit}, so (\ref{sc0-2}) implies (\ref{sc0-3}) in the main theorem is due to Shelah) and (with strong assumptions, see below) the fact that the union of a chain (of length strictly less than $\lambda^{++}$) of saturated models of cardinality $\lambda^+$ is saturated \cite[II.8]{shelahaecbook}. From this he deduces the existence of a good $\lambda^+$-frame on the class of $\lambda^+$-saturated models of $\K$ and goes on to develop a theory of prime models, regular types, independent sequences, etc.\ in \cite[Chapter III]{shelahaecbook}. The main issue with Shelah's work is that it does not make any global model-theoretic hypotheses (such as tameness or even just amalgamation) and hence often relies on set-theoretic assumptions as well as strong local model-theoretic hypotheses (few models in several cardinals). For example, Shelah's construction of a good frame in the local setup \cite[II.3.7]{shelahaecbook} uses categoricity in two successive cardinals, few models in the next, as well as several diamond-like principles.

By making more global hypotheses, building a good frame becomes easier and can be done in ZFC (see \cite{ss-tame-jsl} or \cite[Chapter IV]{shelahaecbook}). Recently, assuming a monster model and tameness (a locality property of types introduced by VanDieren and the first author, see Definition \ref{shortness-def}), progress have been made in the study of superstability defined in terms of no long splitting chains. Specifically, \cite[5.6]{ss-tame-jsl} proved (\ref{sc0-1}) implies (\ref{sc0-7}). Partial progress in showing (\ref{sc0-1}) implies (\ref{sc0-2}) is made in \cite{ss-tame-jsl} and \cite{indep-aec-apal} but the missing piece of the puzzle, that (\ref{sc0-1}) implies (\ref{sc0-5}), is proven in \cite{bv-sat-v3}. From these results, it can be deduced that \emph{(\ref{sc0-1}) implies (\ref{sc0-2})-(\ref{sc0-5})} (see \cite[7.1]{bv-sat-v3}). Shelah has shown that (\ref{sc0-2}) implies (\ref{sc0-3}), see Fact \ref{good-frame-uq-limit}. Some implications between variants of (\ref{sc0-3}), (\ref{sc0-4}) and (\ref{sc0-5}) are also straightforward (see Fact \ref{local-implications}), though one has to be careful about where the class is stable (the existence of a limit model of cardinality $\lambda$ implies stability in $\lambda$, but not the fact that the union of a chain of $\lambda$-saturated models is $\lambda$-saturated). In the proof of Corollary \ref{main-cor-unbounded}, we end up using a single technical condition, ($\ref{sc0-3}^\ast$), asserting that limit models have a certain degree of saturation. It is quite easy to see that (\ref{sc0-3}), (\ref{sc0-4}), and (\ref{sc0-5}) all imply ($\ref{sc0-3}^\ast$). Finally, (\ref{sc0-6}) directly implies (\ref{sc0-4}) from its definition (see Section \ref{solvability-subsec}).

Thus as noted before the main contributions of this paper are (\ref{sc0-3}) (or really ($\ref{sc0-3}^\ast$)) implies (\ref{sc0-1}) and (\ref{sc0-1}) implies (\ref{sc0-6}). In Theorem \ref{solv-transfer} it is shown that, assuming a monster model and tameness, solvability in \emph{some} high-enough cardinal implies solvability in \emph{all} high-enough cardinals. Note that Shelah asks (inspired by the analogous question for categoricity) in \cite[Question N.4.4]{shelahaecbook} what the solvability spectrum can be (in an arbitrary AEC). Theorem \ref{solv-transfer} provides a partial answer under the additional assumptions of a monster model and tameness. The proof notices that a powerful results of Shelah and Villaveces \cite{shvi635} (deriving no long splitting chains from categoricity) can be adapted to our setup (see Fact \ref{ns-lc-fact} and Corollary \ref{ns-lc-cor}). Shelah also asks \cite[Question N.4.5]{shelahaecbook} about the superlimit spectrum. In our context, we can show that if there is a high-enough \emph{stability} cardinal $\lambda$ with a superlimit model, then $\K$ has a superlimit on a tail of cardinals (see Corollary \ref{main-cor-unbounded}). We do not know if the hypothesis that $\lambda$ is a stability cardinal is needed (see Question \ref{superlimit-question}).

Since this paper was first circulated (July 2015), several related results have been proven. VanDieren \cite{vandieren-symmetry-apal, vandieren-chainsat-apal} gives some relationships between versions of (\ref{sc0-3}) and (\ref{sc0-5}) in a single cardinal (with (\ref{sc0-1}) as a background assumption). This is done without assuming tameness, using very different technologies than in this paper. This work is applied to the tame context in \cite{vv-symmetry-transfer-v3}, showing for example that (\ref{sc0-1}) implies (\ref{sc0-3}) holds cardinal by cardinal. A recent preprint of the second author \cite{stab-spec-v4} studies the model theory of strictly stable tame AECs, establishing in particular that stability on a tail implies no long splitting chains (see Remark \ref{stab-ss-rmk}).

We do not know how to prove analogs to the last two properties of Fact \ref{fo-superstab}. Note also that, while the analogous result is known for stability (see Fact \ref{stab-spectrum}), we do not know whether no long splitting chains should hold below the Hanf number:

\begin{question}
  Let $\K$ be a $\LS (\K)$-tame AEC with a monster model. Assume that there exists $\mu \ge \LS (\K)$ such that $\K$ is stable in $\mu$ and has no long splitting chains in $\mu$. Is the least such $\mu$ below $H_1$?
\end{question}

The background required to read this paper is a solid knowledge of AECs (for example Chapters 4-12 of Baldwin's book \cite{baldwinbook09} or the upcoming \cite{grossbergbook}). We rely on the first ten sections of \cite{indep-aec-apal}, as well as on the material in \cite{sv-infinitary-stability-afml, bv-sat-v3}.

At the beginning of Sections \ref{forking-sec} and \ref{solvability-sec}, we make \emph{global} hypotheses that hold until the end of the section (unless said otherwise). This is to make the statements of several technical lemmas more readable. We will repeat the global hypotheses in the statement of major theorems.

This paper was written while the second author was working on a Ph.D.\ thesis under the direction of the first author at Carnegie Mellon University. He would like to thank Professor Grossberg for his guidance and assistance in his research in general and in this work specifically. We also thank Will Boney and a referee for feedback that helped improve the presentation of the paper.

\section{Preliminaries}

We assume familiarity with a basic text on AECs such as \cite{baldwinbook09} or \cite{grossbergbook} and refer the reader to the preliminaries of \cite{sv-infinitary-stability-afml} for more details and motivations on the concepts used in this paper.

We use $\K$ (boldface) to denote a class of models together with an ordering (written $\lea$). We will often abuse notation and write for example $M \in \K$. When it becomes necessary to consider only a class of models without an ordering, we will write $K$ (no boldface).

Throughout all this paper, $\K$ is a fixed AEC. Most of the time, $\K$ will have amalgamation, joint embedding, and arbitrarily large models. In this case we say that \emph{$\K$ has a monster model}.

The notion of tameness was introduced by Grossberg and VanDieren \cite{tamenessone} as a useful assumption to prove upward stability results. In \cite{tamenesstwo, tamenessthree}, several cases of Shelah's eventual categoricity conjecture were established in tame AECs. Boney \cite{tamelc-jsl} derived from the existence of a class of strongly compact cardinals that all AECs are tame. In a forthcoming paper Boney and Unger \cite{lc-tame-v3-toappear} establish that if every AEC is tame then a proper class of large cardinals exists.  Thus tameness (for all AECs) is a large cardinal axioms. We believe that this is evidence for the assertion that tameness is a new interesting model-theoretic property, a new dichotomy\footnote{Consider, for example, the statement that in a monster model for a first-order theory $T$, for every sufficiently long sequence $\BI$ there exists a subsequence $\BJ \subseteq \BI$ such that $\BJ$ is indiscernible. In general, this is a large cardinal axiom, but it is known to be true when $T$ is on the good side of a dividing line (in this case stability). We believe that the situation for tameness is similar.}, that should follow (see \cite[Conjecture 1.5]{tamenessthree}) from categoricity in a ``high-enough'' cardinal.

\begin{defin}[3.2 in \cite{tamenessone}]\label{shortness-def}
  Let $\chi \ge \LS (\K)$ be a cardinal. $\K$ is \emph{$\chi$-tame} if for any $M \in \K_{\ge \chi}$ and any $p \neq q$ in $\gS (M)$, there exists $M_0 \in \K_{\chi}$ such that $p \rest M_0 \neq q \rest M_0$. We similarly define $(<\chi)$-tame (when $\chi > \LS (\K)$).
  
  We say that \emph{$\K$ is tame} provided there exists a cardinal $\chi$ such that\footnote{As opposed to \cite[3.3]{tamenessone}, we do \emph{not} require that $\chi < H_1$.} $\K$ is $\chi$-tame.
\end{defin}
\begin{remark}
  If $\K$ is $\chi$-tame for $\chi > \LS (\K)$, the class $\K' := \K_{\ge \chi}$ will be an $\LS (\K')$-tame AEC. Hence we will usually directly assume that $\K$ is $\LS (\K)$-tame.
\end{remark}

We will use the equivalence between stability and the order property under tameness \cite[4.13]{sv-infinitary-stability-afml}:

\begin{fact}\label{stab-spectrum}
  Assume that $\K$ is $\LS (\K)$-tame and has a monster model. The following are equivalent:

  \begin{enumerate}
  \item $\K$ is stable in some cardinal greater than or equal to $\LS (\K)$.
  \item There exists $\mu < H_1$ such that $\K$ is stable in all $\lambda \ge \LS (\K)$ such that $\lambda = \lambda^{\mu}$.
  \item $\K$ does not have the $\LS (\K)$-order property.
  \end{enumerate}
\end{fact}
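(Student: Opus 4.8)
The plan is to prove the cycle $(1) \Rightarrow (3) \Rightarrow (2) \Rightarrow (1)$, where the two substantial implications are $(1) \Rightarrow (3)$ and $(3) \Rightarrow (2)$. The implication $(2) \Rightarrow (1)$ is immediate: there is always a cardinal $\lambda \ge \LS(\K)$ with $\lambda = \lambda^\mu$ (e.g.\ $\lambda = 2^\mu$), so $(2)$ directly hands us a stability cardinal. The central tool for the two hard directions is the $(<\kappa)$-Galois Morleyization: fixing $\kappa := \LS(\K)^+$, one expands the language by a relation symbol for each Galois type of a tuple of length $\le \LS(\K)$ over the empty set, obtaining an expanded class whose language has size at most $2^{\LS(\K)} < H_1$ (since there are at most $2^{\LS(\K)}$ such types). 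The role of tameness is that in this expansion the syntactic type of a tuple over a model determines, and is determined by, its Galois type; this reduces the counting of Galois types to the classical counting of syntactic types, where the order property and stability interact in the familiar first-order manner.

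For $(1) \Rightarrow (3)$ I would argue by contraposition and show that the $\LS(\K)$-order property forces instability in every cardinal $\ge \LS(\K)$. Given the order property, a standard Ehrenfeucht--Mostowski construction (using that $\K$ has arbitrarily large models) produces, for each linear order $I$, a sequence $\langle \ba_i : i \in I \rangle$ witnessing it; taking $I$ of size $\lambda$ with more than $\lambda$ cuts yields more than $\lambda$ distinct types in $\gS(M)$ for a suitable $M$ of size $\lambda$. Since having the $\LS(\K)$-order property means having it at unbounded (equivalently, via a Hanf-number argument through Ehrenfeucht--Mostowski models, at all) lengths, this contradiction occurs at every $\lambda \ge \LS(\K)$. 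Hence the order property implies instability in every cardinal, and $(1)$ (stability somewhere) implies $(3)$.

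For $(3) \Rightarrow (2)$ — the heart of the matter — I would set $\mu := 2^{\LS(\K)} < H_1$ and use the Morleyization above. First transfer the hypothesis: absence of the $\LS(\K)$-order property in $\K$ translates into stability of the associated expanded class in the syntactic sense. Then, for $\lambda$ with $\lambda = \lambda^\mu$, one counts syntactic types over a model $M$ of size $\lambda$: a stable theory in a language of size $\mu$ whose relevant formulas involve fewer than $\kappa$ variables has at most $\lambda^{<\kappa} \le \lambda^\mu = \lambda$ such types. By tameness these syntactic types coincide with the members of $\gS(M)$, so $|\gS(M)| \le \lambda$ and $\K$ is stable in $\lambda$. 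As $\mu < H_1$, this is precisely $(2)$.

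The main obstacle, and the technical core, is establishing the correspondence between Galois types and syntactic types in the Morleyization under tameness, together with the faithful transfer of the order property between $\K$ and its Morleyization. Getting the arity bookkeeping right — so that $\LS(\K)$-tameness matches exactly the use of $(<\kappa)$-ary formulas, and the type count closes under $\lambda = \lambda^\mu$ — is where the care lies; once this dictionary is in place, both substantial implications reduce to classical stability-theoretic counting and Ehrenfeucht--Mostowski arguments.
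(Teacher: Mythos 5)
Your proposal is correct and is essentially the proof behind the paper's own treatment of this statement: the paper does not prove it but quotes it from \cite[4.13]{sv-infinitary-stability-afml}, and that source argues exactly as you do, working in the Galois Morleyization at $\kappa = \LS(\K)^+$, where $\LS(\K)$-tameness makes Galois types over models interdefinable with quantifier-free syntactic types in a vocabulary of size at most $2^{\LS(\K)} < H_1$. The two substantial steps are likewise the same there: an Ehrenfeucht--Mostowski/Hanf-number argument showing that the $\LS(\K)$-order property forces instability in every cardinal $\ge \LS(\K)$, and Shelah's syntactic stability spectrum for $(<\kappa)$-ary formulas (the ``classical counting'' you invoke, which is the result quoted later in this paper as Fact \ref{stab-spectrum-2}) giving stability whenever $\lambda = \lambda^{\mu}$.
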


\subsection{Superstability and no long splitting chains}

A definition of superstability analogous to $\kappa (T) = \aleph_0$ in first-order model theory has been studied in AECs (see \cite{shvi635, gvv-mlq, vandierennomax, nomaxerrata, ss-tame-jsl, indep-aec-apal}). Since it is not immediately obvious what forking should be in that framework, the more rudimentary independence relation of $\lambda$-splitting is used in the definition. Since in AECs, types over models are much better behaved than types over sets, it does not make sense in general to ask for every type to not split over a finite set\footnote{But see \cite[C.14]{ap-universal-v10} where a notion of forking over set is constructed from categoricity in a universal class.}. Thus we require that every type over the union of a chain does not split over a model in the chain. For technical reasons (it is possible to prove that the condition follows from categoricity), we require the chain to be increasing with respect to universal extension. Definition \ref{loc-card-def} rephrases (\ref{sc0-1}) in Corollary \ref{main-cor}:

\begin{defin}\label{loc-card-def} \
Let $\lambda \ge \LS (K)$. We say $\K$ has \emph{no long splitting chains in $\lambda$} if for any limit $\delta < \lambda^+$, any increasing $\seq{M_i : i < \delta}$ in $\K_\lambda$ with $M_{i + 1}$ universal over $M_i$ for all $i < \delta$, any $p \in \gS (\bigcup_{i < \delta} M_i)$, there exists $i < \delta$ such that $p$ does not $\lambda$-split over $M_i$.
\end{defin}

\begin{remark}
  The condition in Definition \ref{loc-card-def} first appears in \cite[Question 6.1]{sh394}. In \cite[15.1]{baldwinbook09}, it is written as\footnote{Of course, the $\kappa$ notation has a long history, appearing first in \cite{sh3}.} $\kappa (\K, \lambda) = \aleph_0$. We do not adopt this notation, since it blurs out the distinction between forking and splitting, and does not mention that only a certain type of chains are considered. A similar notation is in \cite[3.16]{indep-aec-apal}: $\K$ has no long splitting chains in $\lambda$ if and only if $\clc{1} (\insp{\lambda} (\K_\lambda), <_{\text{univ}}) = \aleph_0$.
\end{remark}

In tame AECs with a monster model, no long splitting chains transfers upward:

\begin{fact}[10.10 in \cite{indep-aec-apal}]\label{ss-stable}
  Let $\K$ be an AEC with a monster model and let $\LS (\K) \le \lambda \le \mu$. If $\K$ is stable in $\lambda$ and has no long splitting chains in $\lambda$, then $\K$ is stable in $\mu$ and has no long splitting chains in $\mu$.
\end{fact}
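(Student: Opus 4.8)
The plan is to prove the two conclusions in order: first stability in $\mu$, then no long splitting chains in $\mu$, proceeding by induction on $\mu \ge \lambda$ and using tameness to transfer information between cardinals. Throughout I would rely on the standard properties of $\lambda$-splitting: monotonicity in the base and in the domain, existence of non-splitting extensions to universal extensions (available under stability in $\lambda$), and uniqueness of the non-splitting extension over a model that is universal over the base. I would also use the ``locality of splitting'' principle available under tameness: if $M_0 \lea M \lea N$ with $|M_0| = \lambda$, $M$ universal over $M_0$, and a type's restriction to $M$ does not $\lambda$-split over $M_0$, then its unique extension to $N$ does not split over $M_0$ either. This is what lets me pass between $\lambda$-splitting and $\mu$-splitting.

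\textbf{Stability transfer.} I would show by induction on $\mu \ge \lambda$ that $\K$ is stable in $\mu$, the base case $\mu = \lambda$ being the hypothesis. Fix $M \in \K_\mu$ and a continuous filtration $\seq{M_i : i < \mu}$ of $M$ with each $M_i$ of size $< \mu$ and each $M_{i+1}$ universal over $M_i$; such a filtration exists because, by the induction hypothesis, $\K$ is stable in every cardinal in $[\lambda, \mu)$, so universal extensions of models of size $< \mu$ are available. The crucial claim is that every $p \in \gS(M)$ does not $\lambda$-split over some $M_i$: this I would extract from no long splitting chains in $\lambda$ by selecting inside the $M_i$'s a cofinal $\lambda$-sized chain with universal steps, applying Definition \ref{loc-card-def} in $\lambda$, and transferring the conclusion up via tameness and locality of splitting. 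Since $M$ is universal over each $M_i$, the non-splitting extension of $p \rest M_i$ to $M$ is unique, so $p$ is determined by the pair $(i, p \rest M_i)$. Hence $|\gS(M)| \le \sum_{i < \mu} |\gS(M_i)| \le \sum_{i<\mu} |M_i| \le \mu$, giving stability in $\mu$ (for singular $\mu$ one replaces the last estimate by the obvious refinement). The point is that the base of non-splitting ranges over the \emph{fixed} filtration rather than over all $\mu^\lambda$ submodels of size $\lambda$, which is what makes the count succeed even when $\mu^\lambda > \mu$, where Fact \ref{stab-spectrum} alone would not deliver stability.

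\textbf{No long splitting chains transfer.} Now assuming stability in every cardinal $\ge \lambda$, I would fix a limit $\delta < \mu^+$, an increasing $\seq{N_i : i < \delta}$ in $\K_\mu$ with $N_{i+1}$ universal over $N_i$, and $p \in \gS(\bigcup_{i<\delta} N_i)$, and show $p$ does not $\mu$-split over some $N_i$. Inside the chain I would build an interleaved increasing chain $\seq{M_j : j < \delta'}$ of models of size $\lambda$ with universal steps, cofinal in $\bigcup_i N_i$ in the sense that each $M_j$ lies below some $N_{i(j)}$ and each $N_i$ below some $M_j$. Applying Definition \ref{loc-card-def} in $\lambda$ to $p$ restricted along the $M_j$'s yields some $j_0$ with $p \rest \bigcup_j M_j$ not $\lambda$-splitting over $M_{j_0}$; tameness shows this non-splitting is witnessed cofinally, and locality of splitting upgrades it to the statement that $p$ does not $\mu$-split over the $N_i$ that contains $M_{j_0}$.

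\textbf{Main obstacle.} The delicate point, common to both parts, is the transfer claim: non-$\lambda$-splitting of a restriction over a $\lambda$-sized base yields non-$\mu$-splitting of the full type over a $\mu$-sized model that is universal over that base. This is exactly where tameness is indispensable and where the argument is most technical, since one must check that witnesses to splitting at cardinality $\mu$ can always be pulled down to cardinality $\lambda$, and conversely that a small non-splitting base controls the type globally through uniqueness of non-splitting extensions over universal models. The remaining work—arranging the interleaved $\lambda$-sized chains so that Definition \ref{loc-card-def} applies cleanly, and handling singular $\mu$ in the stability count—is routine but requires care.
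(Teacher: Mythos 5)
The paper does not actually prove this statement: it is quoted as a black box from \cite[10.10]{indep-aec-apal}, so your proposal has to stand on its own. Its skeleton (upward induction, universal-step filtrations, a counting argument via uniqueness of non-splitting extensions, then a chain argument for local character) does match the strategy of the cited literature, but the principle on which both halves of your argument pivot is false, and it is false exactly at the step you yourself identify as the crux.

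Your ``locality of splitting'' principle asserts: if $M_0 \lea M \lea N$, $\|M_0\| = \lambda$, $M$ is universal over $M_0$, $p \in \gS (N)$, and $p \rest M$ does not $\lambda$-split over $M_0$, then $p$ does not split over $M_0$. Here is a counterexample in an $\omega$-stable first-order class, so superstability is not the obstruction: let $T$ be the theory of an equivalence relation $E$ with infinitely many infinite classes, and $\K$ its models ordered by elementary substructure. Pick $M_0 \lea M$ with $M$ universal over $M_0$, let $N \gea M$ realize two $E$-classes $C, D$ disjoint from $M$, and let $p := \gtp (a/N)$ where $a \notin N$ and $a \mathbin{E} c$ for some $c \in C \cap N$. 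Then $p \rest M$ is the type of an element of a class not represented in $M$, and it does not $\lambda$-split over $M_0$; but $p$ does $\lambda$-split over $M_0$: take $N_1 := M_0 \cup (C \cap N)$, $N_2 := M_0 \cup (D \cap N)$ and $f : N_1 \cong_{M_0} N_2$, so that $f (p \rest N_1)$ says $x \mathbin{E} f(c)$ while $p \rest N_2$ says the opposite. What is true is that $p \rest M$ has a \emph{unique non-splitting} extension to $\gS (N)$ (this is the extension/uniqueness calculus you cite); but $p$ is merely \emph{some} extension of $p \rest M$, and nothing forces it to be the non-splitting one --- your phrase ``its unique extension to $N$'' conflates these two things. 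Since the stability count closes with ``transferring the conclusion up via tameness and locality of splitting,'' and the local-character transfer closes with ``locality of splitting upgrades it,'' both halves have a hole in the same place.

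What a correct proof needs, in addition to non-splitting of the restrictions along the chain, is a non-splitting base for the \emph{full} type $p$: by stability in $\lambda$ (essentially \cite[3.3]{sh394}) there is a $\lambda$-sized $M_\ast \lea N$ over which $p$ does not $\lambda$-split, and by tameness this upgrades to non-$\mu$-splitting for every $\mu$ (this part of your plan is correct and is indeed where tameness enters). The real work is reconciling $M_\ast$ with the chain. When $\cf{\delta} > \lambda$, $M_\ast$ sits inside some $N_i$ and base monotonicity finishes the proof --- note that no long splitting chains is not even used in this case. When $\cf{\delta} \le \lambda$, the only case where the hypothesis matters, $M_\ast$ may be spread cofinally across the chain, contained in no $N_i$ and in no member of your interleaved $\lambda$-sized chain, and my counterexample is exactly a picture of what goes wrong then: the restriction of $p$ to the interleaved chain can be non-splitting ``by accident'' while $p$ itself splits over every candidate base. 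The cited proof resolves this with genuinely more machinery (a nonforking relation in which universality over the base is built into the definition, together with its extension, uniqueness and transitivity calculus); this paper's own analogue of the step, Lemma \ref{forking-chain-lc}, resolves it instead with averages of long Morley sequences, where the sheer length of the sequence forces a large segment of it into a single $M_i$. Two smaller slips: ``each $N_i$ below some $M_j$'' is impossible, since $\|N_i\| = \mu > \lambda = \|M_j\|$; and ``$p$ does not $\lambda$-split over $M_i$'' is vacuous once $\|M_i\| > \lambda$, because $\lambda$-splitting witnesses must contain the base, so the claim must be phrased via a $\lambda$-sized submodel of $M_i$ or via $\|M_i\|$-splitting.
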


\subsection{Definitions of saturated}\label{sat-def-subsec}

The search for a good definition of ``saturated'' in AECs is central. We quickly review various possible notions and cite some basic facts about them, including basic implications.

Implicit in the definition of no long splitting chains is the notion of a \emph{limit model}. It plays a central role in the study of AECs that do not necessarily have amalgamation \cite{shvi635} (their study in this context was continued in \cite{vandierennomax, nomaxerrata}). We use the notation and basic definitions from \cite{gvv-mlq}. The two basic facts about limit models (in an AEC with a monster model) are:

\begin{enumerate}
\item Existence: If $\K$ is stable in $\lambda$, then for every $M$ and every limit $\delta < \lambda^+$ there is a $(\lambda, \delta)$-limit over $M$.
\item Uniqueness: Any two limit models of the same length are isomorphic.
\end{enumerate}

Uniqueness of limit models that are \emph{not} of the same cofinality is a key concept which is equivalent to superstability in first-order model theory.

A second notion of saturation can be defined using Galois types (when $\K$ has a monster model): for $\lambda > \LS (\K)$, say $M$ is \emph{$\lambda$-saturated} if every type over a $\lea$-substructure of $M$ of size less than $\lambda$ is realized inside $M$. We will write $\Ksatp{\lambda}$ for the class of $\lambda$-saturated models in $\K$. 

A third notion of saturation appears in \cite[3.1(1)]{sh88}\footnote{We use the definition in \cite[N.2.4(4)]{shelahaecbook} which requires in addition that the model be universal.}. The idea is to encode a generalization of the fact that a union of saturated models should be saturated.

\begin{defin}\label{sl-def}
  Let $M \in \K$ and let $\lambda \ge \LS (\K)$. $M$ is called \emph{superlimit in $\lambda$ if:}

  \begin{enumerate}
    \item $M \in \K_\lambda$.
    \item $M$ is ``properly universal'': For any $N \in \K_\lambda$, there exists $f: N \rightarrow M$ such that $f[N] \lta M$.
    \item Whenever $\seq{M_i : i < \delta}$ is an increasing chain in $\K_\lambda$, $\delta < \lambda^+$ and $M_i \cong M$ for all $i < \delta$, then $\bigcup_{i < \delta} M_i \cong M$.
  \end{enumerate}
\end{defin}

The following local implications between the three definitions are known:

\begin{fact}[Local implications]\label{local-implications}
  Assume that $\K$ has a monster model. Let $\lambda \ge \LS (\K)$ be such that $\K$ is stable in $\lambda$.

  \begin{enumerate}
  \item\label{local-1} If $\chi \in [\LS (\K)^+, \lambda]$ is regular, then any $(\lambda, \chi)$-limit model is $\chi$-saturated.
  \item\label{local-2} If $\lambda > \LS (\K)$ and $\lambda$ is regular, then $M \in \K_{\lambda}$ is saturated if and only if $M$ is $(\lambda, \lambda)$-limit.
  \item\label{local-3} If $\lambda > \LS (\K)$, then any two limit models of size $\lambda$ are isomorphic if and only if every limit model of size $\lambda$ is saturated.
  \item\label{local-4} If $M \in \K_{\lambda}$ is superlimit, then for any limit $\delta < \lambda^+$, $M$ is $(\lambda, \delta)$-limit and (if $\lambda > \LS (\K)$) saturated.
  \item\label{local-5} Assume that $\lambda > \LS (\K)$ and there exists a saturated model $M$ of size $\lambda$. Then $M$ is superlimit if and only if in $\K_\lambda$, the union of any increasing chain (of length strictly less than $\lambda^+$) of saturated models is saturated.
  \end{enumerate}
\end{fact}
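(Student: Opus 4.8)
The plan is to treat part (1) as the engine and to derive the others from it together with the two standard facts about limit models already recorded (existence of a $(\lambda,\delta)$-limit under stability in $\lambda$, and uniqueness of limit models of the same length) and the standard background facts that a saturated model of size $\lambda$ is universal in $\K_\lambda$ and that saturated models of size $\lambda$ are unique up to isomorphism (model-homogeneity, using amalgamation and joint embedding). For part (1), let $M = \bigcup_{i<\chi} M_i$ be a $(\lambda,\chi)$-limit with $\chi$ regular, and let $p \in \gS (N)$ with $N \lea M$ and $|N| < \chi$. Since $\chi$ is regular and $|N| < \chi$, the elements of $N$ appear below a single stage, so $N \subseteq M_i$ for some $i < \chi$, and coherence gives $N \lea M_i$. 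Using the monster model, pick a realization $a$ of $p$ and a model $N^\ast \in \K_\lambda$ with $M_i \lea N^\ast$ and $a \in N^\ast$. Because $M_{i+1}$ is universal over $M_i$, there is an embedding $f : N^\ast \to M_{i+1}$ fixing $M_i$, hence fixing $N$; extending $f$ to an automorphism of the monster fixing $N$ shows $\gtp(f(a)/N) = p$, and $f(a) \in M_{i+1} \subseteq M$ realizes $p$. Thus $M$ is $\chi$-saturated.

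Parts (2) and (3) are then short. In (2), the direction ``$(\lambda,\lambda)$-limit implies saturated'' is part (1) with $\chi = \lambda$ (legitimate since $\lambda > \LS (\K)$ forces $\lambda \ge \LS (\K)^+$, and $\lambda$ is assumed regular); conversely, stability yields a $(\lambda,\lambda)$-limit $M'$, which is saturated by the first direction, so by uniqueness of saturated models any saturated $M$ of size $\lambda$ is isomorphic to $M'$ and hence is itself $(\lambda,\lambda)$-limit. For (3), the backward direction is immediate, as two saturated models of size $\lambda$ are isomorphic. For the forward direction, take any limit model $N$ of size $\lambda$ and any regular $\chi \in [\LS (\K)^+, \lambda]$: existence gives a $(\lambda,\chi)$-limit $M_\chi$, which is $\chi$-saturated by (1), and the hypothesis $N \cong M_\chi$ makes $N$ $\chi$-saturated; letting $\chi$ range over all such regulars shows $N$ is $\lambda$-saturated, i.e.\ saturated.

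The step I expect to be the main obstacle is (4). Here I would first prove a sub-lemma: \emph{if $N \cong M$ then there is $N' \cong M$ with $N \lea N'$ and $N'$ universal over $N$}. I obtain this by interleaving two kinds of extensions along an $\omega$-chain $\seq{R_n : n < \omega}$ with $R_0 = N$: at odd steps take $R_{2n+1}$ universal over $R_{2n}$ (exists by stability in $\lambda$), and at even steps take $R_{2n+2} \cong M$ with $R_{2n+1} \lea R_{2n+2}$ (exists since $M$ is universal). Then $R_\omega := \bigcup_n R_n$ is universal over $N$, because universality over $N$ is inherited by $\lea$-extensions from $R_1$; and $R_\omega \cong M$, because $\seq{R_{2n} : n < \omega}$ is an increasing chain of copies of $M$, so clause (3) of the definition of superlimit forces its union to be isomorphic to $M$. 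Iterating the sub-lemma along $\delta$, taking unions at limit stages (where clause (3) again keeps each stage isomorphic to $M$, since the length is $< \lambda^+$), produces a chain $\seq{M_i : i < \delta}$ of copies of $M$ with $M_{i+1}$ universal over $M_i$; its union is by construction a $(\lambda,\delta)$-limit and is isomorphic to $M$ by clause (3), so $M$ is itself $(\lambda,\delta)$-limit. Taking $\delta = \chi$ for each regular $\chi \in [\LS (\K)^+, \lambda]$ and applying (1) then yields saturation when $\lambda > \LS (\K)$. The delicate points are exactly checking that universality over $N$ transfers up the chain and that clause (3) is applicable at every limit stage of length $< \lambda^+$.

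Finally, for (5), the key observation is that, since saturated models of size $\lambda$ are unique up to isomorphism, an increasing chain of saturated models of size $\lambda$ is the same thing as an increasing chain of copies of $M$. For the forward direction, if $M$ is superlimit then clause (3) gives that the union of such a chain (of length $< \lambda^+$) is isomorphic to $M$, hence saturated. For the backward direction, I verify the three clauses of superlimit for the saturated model $M$: membership in $\K_\lambda$ is given; proper universality is the standard universality of saturated models in $\K_\lambda$; and clause (3) holds because any increasing chain of copies of $M$ is a chain of saturated models, whose union is saturated by hypothesis and therefore isomorphic to $M$ by uniqueness. This completes the loop of equivalences.
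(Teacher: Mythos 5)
Your proof is correct and takes essentially the same route as the paper: your arguments for (1)--(3) and (5) are precisely the ``straightforward'' ones the paper alludes to (basic facts about limit models plus uniqueness of saturated models), and your interleaving-chain proof of (4) is a self-contained rendering of the standard argument that the paper simply cites from Drueck's thesis \cite[2.3.10]{drueckthesis}. One small point to make explicit: in the backward direction of (5), universality of the saturated model $M$ by itself only yields an embedding $f \colon N \to M$, not the \emph{properness} $f[N] \lta M$ required by the superlimit definition; this is repaired by embedding a proper extension $N \lta N'$ of size $\lambda$ (which exists since a monster model has no maximal models) and restricting to $N$, so that $f[N] \subsetneq f[N'] \lea M$.
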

\begin{proof}
  (\ref{local-1}), (\ref{local-2}), and (\ref{local-3}) are straightforward from the basic facts about limit models and the uniqueness of saturated models. (\ref{local-4}) is by \cite[2.3.10]{drueckthesis} and the previous parts. (\ref{local-5}) then follows.
\end{proof}
\begin{remark}
  (\ref{local-3}) is stated for $\lambda$ regular in \cite[2.3.12]{drueckthesis} but the argument above shows that it holds for any $\lambda$.
\end{remark}

\subsection{Skeletons}\label{skeleton-sec}

The notion of a skeleton was introduced in \cite[Section 5]{indep-aec-apal} and is meant to be an axiomatization of a subclass of saturated models of an AEC. It is mentioned in (\ref{sc0-2}) of the main corollary.

Recall the definition of an abstract class, due to the first author \cite{grossbergbook} (or see \cite[2.7]{sv-infinitary-stability-afml}): it is a pair $\K' = (K', \leap{\K'})$ so that $K'$ is a class of $\tau$-structures in a fixed vocabulary $\tau = \tau (\K')$, closed under isomorphisms, and $\leap{\K'}$ is a partial order on $K'$ which respects isomorphisms and extends the $\tau$-substructure relation. 

\begin{defin}[5.3 in \cite{indep-aec-apal}]\label{skel-def}
  A \emph{skeleton} of an abstract class $\K^\ast$ is an abstract class $\K'$ such that:

  \begin{enumerate}
  \item $K' \subseteq K^\ast$ and for $M, N \in \K'$, $M \leap{\K'} N$ implies $M \leap{\K^\ast} N$.
  \item $\K'$ is dense in $\K^\ast$: For any $M \in \K^\ast$, there exists $M' \in \K'$ such that $M \leap{\K^\ast} M'$.
  \item If $\alpha$ is a (not necessarily limit) ordinal and $\seq{M_i : i < \alpha}$ is a strictly $\leap{\K^\ast}$-increasing chain in $\K'$, then there exists $N \in \K'$ such that $M_i \leap{\K'} N$ and\footnote{Note that if $\alpha$ is limit this follows.} $M_i \neq N$ for all $i < \alpha$.
  \end{enumerate}
\end{defin}

\begin{example}
  Let $\lambda \ge \LS (\K)$. Assume that $\K$ is stable in $\lambda$, has amalgamation and no maximal models in $\lambda$. Let $K'$ be the class of limit models of size $\lambda$ in $\K$. Then $(K', \lea)$ (or even $K'$ ordered with ``being equal or universal over'') is a skeleton of $\K_\lambda$.
\end{example}
\begin{remark}
  If $\K'$ is a skeleton of $\K_\lambda$ and $\K'$ itself generates an AEC, then $M \leap{\K'} N$ if and only if $M, N \in \K'$ and $M \lea N$. This is because of the third clause in the definition of a skeleton (used with $\alpha = 2$) and the coherence axiom.
\end{remark}

We can define notions such as amalgamation and Galois types for any abstract class (see the preliminaries of \cite{sv-infinitary-stability-afml}). The properties of a skeleton often correspond to properties of the original AEC:

\begin{fact}\label{skeleton-facts}
  Let $\lambda \ge \LS (\K)$ and assume that $\K$ has amalgamation in $\lambda$. Let $\K'$ be a skeleton of $\K_\lambda$.

  \begin{enumerate}
  \item\label{skel-1} For $P$ standing for having no maximal models in $\lambda$, being stable in $\lambda$, or having joint embedding in $\lambda$, $\K$ has $P$ if and only if $\K'$ has $P$.
  \item\label{skel-2} Assume that $\K$ has joint embedding in $\lambda$ and for every limit $\delta < \lambda^+$ and every $N \in  \K'$ there exists $N' \in \K'$ which is $(\lambda, \delta)$-limit over $N$ (in the sense of $\K'$).
    \begin{enumerate}
    \item\label{skel-2a} Let $M, M_0 \in \K'$ and let $\delta < \lambda^+$ be a limit ordinal. Then $M$ is $(\lambda, \delta)$-limit over $M_0$ in the sense of $\K'$ if and only $M$ is $(\lambda, \delta)$-limit over $M_0$ in the sense of $\K$.
    \item\label{skel-2b} $\K'$ has no long splitting chains in $\lambda$ if and only if $\K$ has no long splitting chains in $\lambda$.
    \end{enumerate}
  \end{enumerate}
\end{fact}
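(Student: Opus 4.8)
The plan is to isolate one lifting lemma that compensates for the fact that the skeleton ordering $\leap{\K'}$ may be strictly stronger than $\lea$ on $\K'$, and then derive every clause from it together with the density clause and amalgamation in $\K$. The obstacle pervading the whole argument is exactly this mismatch: for $M, N \in \K'$, knowing $M \lea N$ does \emph{not} give $M \leap{\K'} N$, and the only mechanism in Definition \ref{skel-def} producing $\leap{\K'}$-relations is the closure clause. I would first prove the \emph{Lifting Lemma}: if $M, N \in \K'$ and $M \lea N$, then there is $R \in \K'$ with $M \leap{\K'} R$ and $N \leap{\K'} R$. This is immediate from the closure clause applied to the chain $\seq{M, N}$ (and the case $\alpha = 1$ shows $\K'$ has no maximal models). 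Since this yields only a common \emph{extension} rather than a direct comparison, each later step takes the shape of a two-step zig-zag: lift one model, note the second now sits $\lea$-below the lift, lift again, and conclude by transitivity of $\leap{\K'}$.

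For part (\ref{skel-1}): no maximal models holds in $\K'$ by the closure clause and transfers to $\K$ via density and the Lifting Lemma, the converse being automatic. For joint embedding, and for \emph{amalgamation} in $\K'$ (which I would establish first, since Galois types in $\K'$ require it to be well defined), I would combine the two given $\K'$-models in $\K$, use density to re-enter $\K'$, and then apply the Lifting Lemma twice to manufacture a genuine $\K'$-amalgam, using $\leap{\K'} \subseteq \lea$ so that all the $\K$-embeddings survive. For stability I would show that for each $M \in \K'$ the natural map $\gS_{\K'}(M) \to \gS(M)$ (same representatives) is a bijection: well-definedness and surjectivity use density plus the Lifting Lemma, and injectivity uses the zig-zag to upgrade a $\K$-amalgam of two type representatives into a $\K'$-amalgam. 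Then $\K$ stable in $\lambda$ gives $\K'$ stable directly, while conversely stability of $\K'$ bounds $|\gS(M')|$ for $M' \in \K'$, which bounds $|\gS(M)|$ for any $M \in \K_\lambda$ by pushing types up to a density-extension $M' \gea M$ using amalgamation in $\K$.

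For part (\ref{skel-2a}) the key is that $\K'$-universality of $N'$ over $N$ implies $\K$-universality: given $P \gea N$ in $\K_\lambda$, use density and the Lifting Lemma to embed $P$ into a $\K'$-model $\leap{\K'}$-above $N$, then apply $\K'$-universality and compose. The forward direction of (\ref{skel-2a}) then follows because a witnessing $\K'$-chain is automatically $\lea$-increasing and continuous with $\K$-universal successors. For the converse I would invoke the existence hypothesis of (\ref{skel-2}) to produce some $M^\ast \in \K'$ that is $(\lambda,\delta)$-limit over $M_0$ in $\K'$; by the forward direction $M^\ast$ is also $(\lambda,\delta)$-limit over $M_0$ in $\K$, so by uniqueness of limit models of the same length over a fixed base in $\K$ (amalgamation is available) there is an isomorphism $M^\ast \cong_{M_0} M$, along which I transport the witnessing $\K'$-chain — using that $\K'$, $\leap{\K'}$, and $\K'$-universality all respect isomorphisms — to conclude that $M$ is $(\lambda,\delta)$-limit over $M_0$ in $\K'$.

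For part (\ref{skel-2b}) I would combine the Galois-type bijection from part (\ref{skel-1}), which one checks commutes with restrictions and with the action of isomorphisms, with the fact that the union of a chain with universal successors is a $(\lambda,\delta)$-limit model and hence (by (\ref{skel-2a})) lies in both classes; thus types over these unions and their restrictions agree in $\K$ and in $\K'$. The delicate point, which I expect to be the main obstacle of the whole Fact, is matching the \emph{splitting witnesses}: a $\K'$-splitting witness is trivially a $\K$-splitting witness (since $\leap{\K'} \subseteq \lea$ and the bijection respects isomorphisms), but to go the other way one must replace witnesses $N_1, N_2 \lea M_\delta$ living in $\K_\lambda$ by witnesses inside $\K'$ sitting $\leap{\K'}$-below the limit model $M_\delta$, without destroying the inequality of the relevant type-restrictions. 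Here I would exploit the high universality of the limit model $M_\delta$ together with the Lifting Lemma to relocate the witnesses into $\K'$ below $M_\delta$. With splitting, universality, and the relevant unions all matched, the quantified statement defining no long splitting chains in $\lambda$ reads identically in $\K$ and in $\K'$; the remaining bookkeeping (e.g.\ ensuring the union of the relevant $\K'$-chain is itself handled inside $\K'$) is exactly the kind of technical detail supplied by the skeleton machinery of \cite{indep-aec-apal}.
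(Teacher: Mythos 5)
Your proposal is correct and follows essentially the same route as the paper: for part (\ref{skel-2a}) the paper's argument is exactly yours (forward direction by transferring the witnessing chain, using that $\K'$-universality implies $\K$-universality; converse by combining the existence hypothesis with uniqueness of limit models of the same length over a fixed base and transporting the witnessing chain along the resulting isomorphism). The pieces you reconstruct from the closure clause via your Lifting Lemma --- all of part (\ref{skel-1}), the type bijection, and the witness-matching in (\ref{skel-2b}) --- are precisely the parts the paper outsources to \cite[5.8]{indep-aec-apal} and \cite[6.7]{indep-aec-apal}, and your zig-zag mechanism is the same one underlying those cited results.
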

\begin{proof}
  (\ref{skel-1}) is by \cite[5.8]{indep-aec-apal}. As for (\ref{skel-2a}), (\ref{skel-2b}), note first that the hypotheses of (\ref{skel-2}) imply (by (\ref{skel-1})) that $\K$ is stable in $\lambda$ and has no maximal models in $\lambda$. In particular, limit models of size $\lambda$ exist in $\K$.

  Let us prove (\ref{skel-2a}). If $M$ is $(\lambda, \delta)$-limit over $M_0$ in the sense of $\K'$, then it is straightforward to check that the chain witnessing it will also witness that $M$ is $(\lambda, \delta)$-limit over $M_0$ in the sense of $\K$. For the converse, observe that by assumption there exists a $(\lambda, \delta)$-limit $M'$ over $M_0$ in the sense of $\K'$. Furthermore, by what has just been observed $M'$ is also limit in the sense of $\K$, hence by uniqueness of limit models of the same length, $M' \cong_{M_0} M$. Therefore $M$ is also $(\lambda, \delta)$-limit over $M_0$ in the sense of $\K'$. The proof of (\ref{skel-2b}) is similar, see \cite[6.7]{indep-aec-apal}.
\end{proof}

\subsection{Good frames}\label{good-frame-subsec}

Good frames are a local axiomatization of forking in a first-order superstable theories. They are introduced in \cite[Chapter II]{shelahaecbook}. We will use the definition from \cite[8.1]{indep-aec-apal} which is weaker and more general than Shelah's, as it does not require the existence of a superlimit (as in \cite{jrsh875}). As opposed to \cite{indep-aec-apal}, we allow good frames that are \emph{not} type-full: we only require the existence of a set of well-behaved basic types satisfying some density property (see \cite[Chapter II]{shelahaecbook} for more). Note however that Remark \ref{type-full-rmk} says that in the context of the main theorem the existence of a good frame implies the existence of a \emph{type-full} good frame (possibly over a different class).

In \cite[8.1]{indep-aec-apal}, the underlying class of the good frame consists only of models of size $\lambda$. Thus when we say that there is a good $\lambda$-frame on a class $\K'$, we mean the underlying class of the good frame is $\K'$, and the axioms of good frames will require that $\K'$ generates a non-empty AEC with amalgamation in $\lambda$, joint embedding in $\lambda$, no maximal models in $\lambda$, and stability in $\lambda$.

The only facts that we will use about good frames are:

\begin{fact}\label{good-frame-uq-limit}
  Let $\lambda \ge \LS (\K)$. If there is a good $\lambda$-frame on a skeleton of $\K_{\lambda}$, then $\K$ has a unique limit model of size $\lambda$. Moreover, for any $M_0, M_1, M_2 \in \K_{\lambda}$, if both $M_1$ and $M_2$ are limit over $M_0$, then $M_1 \cong_{M_0} M_2$ (i.e.\ the isomorphism fixes $M_0$).
\end{fact}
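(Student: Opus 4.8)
The plan is to move the problem into the good $\lambda$-frame, where a full nonforking calculus is available, prove uniqueness of limit models there by means of symmetry, and then transport the conclusion back to $\K$ using the skeleton facts.

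Let $\K'$ be the skeleton of $\K_\lambda$ carrying the good frame. By the good frame axioms $\K'$ generates a nonempty AEC that, in $\lambda$, has amalgamation, joint embedding, no maximal models, and stability; by Fact \ref{skeleton-facts}(\ref{skel-1}) all four transfer to $\K_\lambda$, so $\K$ is stable in $\lambda$ and limit models of size $\lambda$ exist. Stability together with no maximal models supplies the existence hypothesis of Fact \ref{skeleton-facts}(\ref{skel-2}), so by (\ref{skel-2a}) the predicate ``$(\lambda,\delta)$-limit over $M_0$'' computed in $\K'$ agrees with the one computed in $\K$ whenever $M_0 \in \K'$. It therefore suffices to prove, inside $\K'$, that any two limit models over a common base $M_0 \in \K'$ are isomorphic over $M_0$: the unqualified uniqueness of limit models of size $\lambda$ then follows by transporting both models, via base-free same-length uniqueness, onto a single base in $\K'$ and applying the over-base statement, while the general case $M_0 \in \K_\lambda$ is reduced to a base in $\K'$ using the density of the skeleton and the basic facts about limit models. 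From here I use freely the frame's nonforking relation — invariance, monotonicity, local character, transitivity, uniqueness, extension, continuity and, crucially, symmetry.

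I first reduce to two distinct cofinalities. A $(\lambda,\delta)$-limit over $M_0$ depends up to isomorphism over $M_0$ only on $\cf{\delta}$, and same-length limits over $M_0$ are isomorphic over $M_0$; so, taking $M_1$ a $(\lambda,\theta_1)$-limit and $M_2$ a $(\lambda,\theta_2)$-limit over $M_0$ with $\theta_1 < \theta_2$ regular, the whole statement comes down to producing an isomorphism $M_1 \cong_{M_0} M_2$ fixing $M_0$. The heart is Shelah's tower argument (\cite[II.\S4]{shelahaecbook}; compare \cite{gvv-mlq}): one works with towers $(\seq{M_i : i < \theta}, \seq{a_i}, \seq{N_i})$ — a $\lea$-increasing continuous chain based at $M_0$ with $M_{i+1}$ universal over $M_i$, together with $N_i \lea M_i$ and elements $a_i$ such that $\gtp(a_i / M_i)$ does not fork over $N_i$ in the frame — ordered by a suitable extension relation, and isolates the \emph{reduced} towers. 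The decisive lemma is that every reduced tower is \emph{continuous}; fed into a chain-building and density argument, this lets one match a $(\lambda,\theta_2)$-tower against a $(\lambda,\theta_1)$-tower over the same base and extract the required base-fixing isomorphism.

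I expect the continuity of reduced towers to be the main obstacle. It is the single step that absorbs the mismatch between the two cofinalities, and it is exactly where the symmetry axiom of the good frame is consumed; the surrounding bookkeeping — keeping the witnesses $N_i$ coherent at limit stages and ensuring the constructed chains retain the universal-over-previous property — is delicate but formal. By contrast, the skeleton transfer, the reduction to regular cofinalities, and the closing appeals to same-length uniqueness are routine once the frame calculus and Fact \ref{skeleton-facts} are in hand.
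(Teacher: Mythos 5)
Your proposal is correct and follows the paper's proof exactly in structure: the paper also takes $\K'$ to be the skeleton underlying the frame, quotes the in-frame uniqueness of limit models over a common base (\cite[II.4.8]{shelahaecbook}, with the detailed proof in \cite[9.2]{ext-frame-jml} --- the reduced-tower/symmetry argument you sketch is precisely the content of that citation), and then transfers the conclusion, including the moreover part, to $\K$ via Fact~\ref{skeleton-facts}(\ref{skel-2a}). The only divergences are presentational: you outline the internals of the cited frame theorem instead of black-boxing it, and you attribute amalgamation transfer to Fact~\ref{skeleton-facts}(\ref{skel-1}), whereas amalgamation in $\lambda$ is a \emph{hypothesis} of that fact rather than a conclusion (harmless here, since the fact is only applied in a context with a monster model).
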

\begin{proof}
  Let $\K'$ be the skeleton of $\K_\lambda$ which is the underlying class of the good $\lambda$-frame. By \cite[II.4.8]{shelahaecbook} (see \cite[9.2]{ext-frame-jml} for a detailed proof), $\K'$ has a unique limit model of size $\lambda$ (and the moreover part holds for $\K'$). By Fact \ref{skeleton-facts}(\ref{skel-2a}), this must also be the unique limit model of size $\lambda$ in $\K$ (and the moreover part holds in $\K$ too).
\end{proof}

\begin{fact}\label{good-frame-existence}
  Assume that $\K$ has a monster model and is $\LS (\K)$-tame. If $\mu < H_1$ is such that $\K$ is stable in $\mu$ and has no long splitting chains in $\mu$, then there exists $\lambda_0 < H_1$ such that for all $\lambda \ge \lambda_0$, there is a good $\lambda$-frame on $\Ksatp{\lambda}_\lambda$. Moreover, $\Ksatp{\lambda}_\lambda$ is a skeleton of $\K_\lambda$, $\K$ is stable in $\lambda$, any $M \in \Ksatp{\lambda}_\lambda$ is superlimit, and the union of any increasing chain of $\lambda$-saturated models is $\lambda$-saturated. 
\end{fact}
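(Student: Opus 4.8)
The plan is to reduce the statement to a combination of the transfer result for no long splitting chains and the (harder) continuity results for saturated models established in the literature. First, since $\K$ is stable in $\mu$ and has no long splitting chains in $\mu$, I would invoke Fact \ref{ss-stable} to get that $\K$ is stable in $\lambda$ and has no long splitting chains in $\lambda$ for every $\lambda \ge \mu$. This already yields the ``$\K$ is stable in $\lambda$'' clause and lets me take $\lambda_0 \ge \mu$. It then remains to produce, for each such $\lambda$, a good $\lambda$-frame on $\Ksatp{\lambda}_\lambda$ together with the skeleton, superlimit, and union-of-saturated conclusions.

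The central ingredient, and what I expect to be the main obstacle, is showing that the union of any increasing chain of $\lambda$-saturated models is $\lambda$-saturated. This is exactly the ``missing piece'' isolated in the introduction and established (under a monster model, tameness, and no long splitting chains) in \cite{bv-sat-v3}; its proof is genuinely delicate because it is where symmetry of the underlying independence relation is really needed. I would invoke it directly rather than reprove it, and choose $\lambda_0 < H_1$ above whatever threshold that result and the frame construction below require. All such thresholds stay $< H_1$ since $\mu < H_1$.

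Granting the union property, the remaining conclusions assemble from known pieces. Stability in $\lambda$ implies that saturated models of size $\lambda$ exist and that every $M \in \K_\lambda$ has a saturated $\lea$-extension of size $\lambda$, so $\Ksatp{\lambda}_\lambda$ is dense in $\K_\lambda$; the union property gives closure of $\Ksatp{\lambda}_\lambda$ under increasing chains, which together with density yields the chain clause of Definition \ref{skel-def}, so $\Ksatp{\lambda}_\lambda$ is a skeleton of $\K_\lambda$. For the good frame, I would take as nonforking the relation derived from $\lambda$-nonsplitting over small models (roughly, $p \in \gS(M)$ does not fork over $M_0 \lea M$ iff $p$ does not $\lambda$-split over some $N \lea M_0$ of size $<\lambda$ over which $M_0$ is universal), restricted to the saturated models. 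Existence and local character come from no long splitting chains, uniqueness from tameness and stability, and the verification of all good-frame axioms on $\Ksatp{\lambda}_\lambda$ is carried out in \cite{ss-tame-jsl} and \cite{indep-aec-apal}; the symmetry and continuity axioms are precisely where the union property is used.

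Finally, the superlimit clause is immediate from Fact \ref{local-implications}(\ref{local-5}): at $\lambda > \LS(\K)$ a saturated model of size $\lambda$ exists and, by the union property just invoked, the union of any increasing chain of saturated models of size $\lambda$ is saturated, so that saturated model is superlimit. Since by uniqueness of saturated models every $M \in \Ksatp{\lambda}_\lambda$ is isomorphic to it, every member of $\Ksatp{\lambda}_\lambda$ is superlimit, which completes all the required conclusions.
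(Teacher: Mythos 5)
Your overall assembly follows the same route as the paper: both reduce everything to the chain-of-saturated-models theorem and good-frame construction of \cite{bv-sat-v3}, then read off the skeleton, stability, and superlimit clauses from Fact \ref{skeleton-facts} and Fact \ref{local-implications}(\ref{local-5}). (The paper cites the packaged statement \cite[7.1]{bv-sat-v3}, which already contains the good $\lambda$-frame on $\Ksatp{\lambda}_\lambda$, rather than re-deriving the frame from nonsplitting via \cite{ss-tame-jsl} and \cite{indep-aec-apal} as you sketch; that difference is cosmetic.) However, there is a genuine gap at the one point where the statement requires real care: the bound $\lambda_0 < H_1$. Your justification is that ``all such thresholds stay $< H_1$ since $\mu < H_1$,'' and that is false for the threshold the cited result actually produces. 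The packaged result \cite[7.1]{bv-sat-v3} gives $\lambda_0 < \beth_{\left(2^{\mu^+}\right)^+}$, and since $\mu \ge \LS (\K)$ we have $2^{\mu^+} \ge 2^{\LS (\K)^+} \ge \left(2^{\LS (\K)}\right)^+$, hence $\beth_{\left(2^{\mu^+}\right)^+} > \beth_{\left(2^{\LS (\K)}\right)^+} = H_1$: this threshold \emph{always} overshoots $H_1$, even in the best case $\mu = \LS (\K)$. Closure of $H_1$ under such beth-operations applied to arbitrary $\mu < H_1$ simply does not hold. The paper devotes the entire second half of its proof to exactly this point, invoking the more precise \cite[6.12]{bv-sat-v3}, which replaces $\beth_{\left(2^{\mu^+}\right)^+}$ by $H_1$ itself.

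There is a second, related issue you pass over by saying you would ``invoke it directly rather than reprove it'': the results of \cite{bv-sat-v3} are stated under $(<\LS (\K))$-tameness (defined via Galois types over sets), which is a stronger hypothesis than the $\LS (\K)$-tameness assumed in the Fact. The paper must argue this away: the only use of $(<\LS (\K))$-tameness there is to obtain some $\chi < H_1$ such that $\K$ lacks an order property of length $\chi$, via an older version of \cite{sv-infinitary-stability-afml}, and the current version (underlying Fact \ref{stab-spectrum}) obtains this from $\LS (\K)$-tameness alone. Without both points — the refined threshold and the weakening of the tameness hypothesis — your citations do not yield the statement as claimed. The rest of your argument (upward transfer via Fact \ref{ss-stable}, the skeleton verification, and the superlimit clause via Fact \ref{local-implications}(\ref{local-5})) is correct and matches the paper.
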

\begin{proof}
  First assume that $\K$ has no long splitting chains in $\LS (\K)$ and is stable in $\LS (\K)$. By \cite[7.1]{bv-sat-v3}, there exists $\lambda_0 < \beth_{\left(2^{\mu^+}\right)^+}$ such that for any $\lambda \ge \lambda_0$, any increasing chain of $\lambda$-saturated models is $\lambda$-saturated and there is a good $\lambda$-frame on $\Ksatp{\lambda}_\lambda$. That any $M \in \Ksatp{\lambda}_\lambda$ is a superlimit (Fact \ref{local-implications}(\ref{local-5})) and $\Ksatp{\lambda}_\lambda$ is a skeleton of $\K_\lambda$ easily follows, and stability in $\lambda$ is given (for example) by Fact \ref{skeleton-facts}(\ref{skel-1}).

  Now by \cite[6.12]{bv-sat-v3}, we more precisely have that if $\K$ has no long splitting chains in $\mu$ and is stable in $\mu$ (with $\mu \ge \LS (\K)$) and $(<\LS (\K))$-tame (tameness being defined using types over sets), then the same conclusion holds with $\beth_{\left(2^{\mu^+}\right)^+}$ replaced by $H_1$. Now the use of $(<\LS (\K))$-tameness is to derive that there exists $\chi < H_1$ so that $\K$ does not have a certain order property of length $\chi$, but \cite{bv-sat-v3} relies on an older version of \cite{sv-infinitary-stability-afml} which proves Fact \ref{stab-spectrum} assuming $(<\LS (\K))$-tameness instead of $\LS (\K)$-tameness. In the current version of \cite{sv-infinitary-stability-afml}, it is shown that $\LS (\K)$-tameness suffices, thus the arguments of \cite{bv-sat-v3} go through assuming $\LS (\K)$-tameness instead of $(<\LS (\K))$-tameness.
\end{proof}

\subsection{Solvability}\label{solvability-subsec}

Solvability appears as a possible definition of superstability for AECs in \cite[Chapter IV]{shelahaecbook}. The definition uses Ehrenfeucht-Mostowski models and we assume the reader has some familiarity with them, see for example \cite[Section 6.2]{baldwinbook09} or \cite[IV.0.8]{shelahaecbook}.

\begin{defin}\label{em-def} \
  \begin{enumerate}
    \item A countable set $\Phi = \{p_n : n < \omega\}$ is \emph{proper for linear orders} if the $p_n$'s are an increasing sequence of $n$-variable quantifier-free types in a fixed vocabulary $\tau (\Phi)$ which are satisfied by a sequence of indiscernibles. As usual, such a set $\Phi$ determines an EM-functor from linear orders into $\tau (\Phi)$-structures, mapping a linear order $I$ to $\EM (I, \Phi)$ and taking suborders to substructures.
    \item \cite[IV.0.8]{shelahaecbook} For $\mu \ge \LS (\K)$, let $\Upsilon_{\mu}[\K]$ be the set of $\Phi$ proper for linear orders with $\tau (\K) \subseteq \tau (\Phi)$, $|\tau (\Phi)| \le \mu$, and such that the $\tau (\K)$-reduct $\EM_{\tau (\K)} (I, \Phi)$ is a functor from linear orders into members of $\K$ of cardinality at most $|I| + \mu$. Such a $\Phi$ is called an \emph{EM blueprint for $\K$}.
  \end{enumerate}
\end{defin}
\begin{defin}\label{solvability-def}
  Let $\LS (\K) \le \mu \le \lambda$.
  
  \begin{enumerate}
    \item \cite[IV.1.4(1)]{shelahaecbook} We say that \emph{$\Phi$ witnesses $(\lambda, \mu)$-solvability} if:
  \begin{enumerate}
  \item $\Phi \in \Upsilon_{\mu}[\K]$.
  \item If $I$ is a linear order of size $\lambda$, then $\EM_{\tau (\K)} (I, \Phi)$ is superlimit in $\lambda$ for $\K$, see Definition \ref{sl-def}.
  \end{enumerate}
  
  $\K$ is \emph{$(\lambda, \mu)$-solvable} if there exists $\Phi$ witnessing $(\lambda, \mu)$-solvability.
\item $\K$ is \emph{uniformly $(\lambda, \mu)$-solvable} if there exists $\Phi$ such that for all $\lambda' \ge \lambda$, $\Phi$ witnesses $(\lambda', \mu)$-solvability.
  \end{enumerate}
\end{defin}
\begin{fact}[IV.0.9 in \cite{shelahaecbook}]\label{EM-existence}
  Let $\K$ be an AEC and let $\mu \ge \LS (\K)$. Then $\K$ has arbitrarily large models if and only if $\Upsilon_\mu[\K] \neq \emptyset$.
\end{fact}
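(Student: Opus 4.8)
The plan is to prove the reverse implication straight from the definitions and to derive the forward implication from Shelah's presentation theorem together with the classical Morley method for building Ehrenfeucht--Mostowski models that omit a prescribed family of types. The direction $(\Leftarrow)$ is immediate: fix $\Phi \in \Upsilon_\mu[\K]$ (Definition \ref{em-def}) and, given any infinite cardinal $\lambda \ge \mu$, apply the functor to a linear order $I$ with $|I| = \lambda$. Since $\Phi$ is proper for linear orders its generators are distinct, so $\EM_{\tau (\K)} (I, \Phi)$ has cardinality between $|I|$ and $|I| + \mu = \lambda$; as it is by definition a member of $\K$, this exhibits a model of $\K$ of size $\lambda$. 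Letting $\lambda$ range over all cardinals $\ge \mu$ shows $\K$ has arbitrarily large models.

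For $(\Rightarrow)$, first apply Shelah's presentation theorem to represent $\K$ as a $\mathrm{PC}$-class: there are a vocabulary $\tau_1 \supseteq \tau (\K)$ with $|\tau_1| \le \LS (\K) \le \mu$, a first-order $\tau_1$-theory $T_1$, and a set $\Gamma$ of $\tau_1$-types such that $K$ is exactly the class of $\tau (\K)$-reducts of models of $T_1$ omitting every type in $\Gamma$ (with $\leap{\K}$ captured by $\tau_1$-substructure). Skolemizing $\tau_1$ to $\tau_1^{\mathrm{sk}}$ and $T_1$ to $T_1^{\mathrm{sk}}$ leaves the vocabulary of size $\le \LS (\K) \le \mu$; set $\tau (\Phi) := \tau_1^{\mathrm{sk}}$. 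The hypothesis that $\K$ has arbitrarily large models supplies a member of $\K$, hence a model of $T_1^{\mathrm{sk}}$ omitting $\Gamma$, of cardinality at least the Hanf number $H_1 = \hanfe{\LS (\K)}$ for omitting types in a vocabulary of size $\le \LS (\K)$. From such a model I would extract, by Morley's method (the Erd\H{o}s--Rado theorem), a sequence of order-indiscernibles and let $\Phi$ be the set of quantifier-free $\tau_1^{\mathrm{sk}}$-types they realize. Then $\Phi$ is proper for linear orders, $\tau (\K) \subseteq \tau (\Phi)$, $|\tau (\Phi)| \le \mu$, and for every linear order $I$ the model $\EM (I, \Phi)$ satisfies $T_1^{\mathrm{sk}}$; its $\tau (\K)$-reduct has size $\le |I| + \mu$ and omits $\Gamma$, so lies in $\K$. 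Hence $\Phi \in \Upsilon_\mu[\K]$.

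The step I expect to be the crux is checking that $\EM_{\tau (\K)} (I, \Phi)$ omits every type of $\Gamma$, which is exactly the content of Morley's omitting-types theorem. By indiscernibility, whether a term-tuple $\sigma (\ba)$ realizes a fixed $q \in \Gamma$ depends only on the order-type of the finitely many indices occurring in $\ba$, so omission over all of $I$ reduces to finitely many such patterns for each $q$; the Hanf-number bound $H_1$ is precisely what lets one choose the indiscernibles inside the large model so that no pattern realizes any $q \in \Gamma$. Granting this, modelhood of $T_1$ and the cardinality bound $|I| + \mu$ are routine consequences of the EM construction.
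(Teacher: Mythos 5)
Your proof is correct, and it is essentially the proof of the cited result: the paper itself gives no argument for this Fact (it is quoted directly from \cite[IV.0.9]{shelahaecbook}), and Shelah's proof there is exactly your route --- the easy direction from the definition of $\Upsilon_\mu[\K]$ (using that the skeleton of an EM model is a sequence of \emph{distinct} generators, so $\|\EM_{\tau(\K)}(I,\Phi)\| \ge |I|$), and for the converse the presentation theorem to write $\K$ as a PC-class over a vocabulary of size $\le \LS(\K)$, followed by Morley's omitting-types method (Erd\H{o}s--Rado applied inside a model of size $\ge \hanfe{\LS(\K)}$) to extract a blueprint whose EM models omit $\Gamma$.
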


We give some more manageable definitions of solvability ((\ref{equiv-cond-3}) is the one we will use). Shelah already mentions one of them on \cite[p.~61]{shelahaecbook} (but does not prove it is equivalent).

\begin{lem}\label{solvability-equiv}
  Let $\LS (\K) \le \mu \le \lambda$. The following are equivalent.

  \begin{enumerate}
    \item\label{equiv-cond-1} $\K$ is [uniformly] $(\lambda, \mu)$-solvable.
    \item\label{equiv-cond-2} There exists $\tau' \supseteq \tau (\K)$ with $|\tau'| \le \mu$ and $\psi \in \Ll_{\mu^+, \omega} (\tau')$ such that:

      \begin{enumerate}
        \item $\psi$ has arbitrarily large models.
        \item {[}For all $\lambda' \ge \lambda$], if $M \models \psi$ and $\|M\| = \lambda$ [$\|M\| = \lambda'$], then $M \rest \tau (\K)$ is in $\K$ and is superlimit.
      \end{enumerate}
    \item\label{equiv-cond-3} There exists $\tau' \supseteq \tau (\K)$ and an AEC $\K'$ with $\tau(\K') = \tau'$, $\LS (\K') \le \mu$ such that:
      \begin{enumerate}
        \item $\K'$ has arbitrarily large models.
        \item {[}For all $\lambda' \ge \lambda$], if $M \in \K'$ and $\|M\| = \lambda$ [$\|M\| = \lambda'$], then $M \rest \tau (\K)$ is in $\K$ and is superlimit.
      \end{enumerate}
  \end{enumerate}
\end{lem}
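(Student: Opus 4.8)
The plan is to prove the three conditions equivalent cyclically, via (\ref{equiv-cond-1}) $\implies$ (\ref{equiv-cond-2}) $\implies$ (\ref{equiv-cond-3}) $\implies$ (\ref{equiv-cond-1}). In each implication the bracketed ``uniform'' version (hypotheses and conclusions indexed by ``for all $\lambda' \ge \lambda$'') will be handled by exactly the same argument as the plain version, since nothing below is sensitive to whether one controls a single cardinal $\lambda$ or a tail of them; I will carry both in parallel. Throughout I will freely use that $\EM_{\tau (\K)} (I, \Phi) = \EM_{\tau (\K')} (I, \Phi) \rest \tau (\K)$ and that the EM-functor sends suborders to substructures.

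For (\ref{equiv-cond-1}) $\implies$ (\ref{equiv-cond-2}), I would start from $\Phi$ witnessing $(\lambda, \mu)$-solvability and axiomatize the class of EM models of $\Phi$ in infinitary logic. Set $\tau' := \tau (\Phi) \cup \{P, <\}$ with $P$ a new unary predicate and $<$ a new binary one, and let $\psi \in \Ll_{\mu^+, \omega} (\tau')$ assert: $(P, <)$ is a linear order; every element equals $t (\bar y)$ for some $\tau (\Phi)$-term $t$ and some $<$-increasing tuple $\bar y$ from $P$; and every $<$-increasing $n$-tuple from $P$ realizes the quantifier-free type $p_n$ of $\Phi$. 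Because $|\tau (\Phi)| \le \mu$ there are at most $\mu$ terms and at most $\mu$ quantifier-free formulas in each arity, so the relevant disjunction and conjunctions have size $\le \mu$ and $\psi$ indeed lies in $\Ll_{\mu^+, \omega} (\tau')$. The point is that, since each $p_n$ is a \emph{complete} quantifier-free type, it determines every identification among terms built from an increasing tuple; hence any $M \models \psi$ has $M \rest \tau (\Phi) \cong \EM_{\tau (\Phi)} (I, \Phi)$ for $I := (P^M, <^M)$. Expanding a genuine EM model by its skeleton and the induced order shows $\psi$ has arbitrarily large models, and for $\|M\| = \lambda$ (so $|I| = \lambda$, using $\lambda \ge \mu$) the reduct $M \rest \tau (\K) = \EM_{\tau (\K)} (I, \Phi)$ is superlimit by the choice of $\Phi$. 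The care here is purely in the exactness of the axiomatization and the cardinal bookkeeping $\|\EM (I, \Phi)\| = |I| + \mu$.

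The implication (\ref{equiv-cond-2}) $\implies$ (\ref{equiv-cond-3}) is the routine one: fixing a fragment $\F \subseteq \Ll_{\mu^+, \omega} (\tau')$ of size $\le \mu$ with $\psi \in \F$, the class $\K' := (\Mod (\psi), \prec_{\F})$ is an AEC with $\tau (\K') = \tau'$ and $\LS (\K') \le \mu$ by the standard analysis of $\Ll_{\mu^+, \omega}$-axiomatizable classes. Since passing to $\prec_{\F}$ changes only the ordering and not the underlying class or the cardinalities of its members, the two requirements of (\ref{equiv-cond-3}) (arbitrarily large models, and superlimit $\tau (\K)$-reducts at size $\lambda$) transfer verbatim from (\ref{equiv-cond-2}).

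For (\ref{equiv-cond-3}) $\implies$ (\ref{equiv-cond-1}), I would apply Fact \ref{EM-existence} to $\K'$ (legitimate since $\mu \ge \LS (\K')$ and $\K'$ has arbitrarily large models) to obtain $\Phi \in \Upsilon_\mu [\K']$. Then $\tau (\K) \subseteq \tau (\K') \subseteq \tau (\Phi)$, $|\tau (\Phi)| \le \mu$, and for $|I| = \lambda$ the model $\EM_{\tau (\K')} (I, \Phi) \in \K'$ has size $\lambda$, so its $\tau (\K)$-reduct $\EM_{\tau (\K)} (I, \Phi)$ is superlimit by (\ref{equiv-cond-3}); this gives clause (b) of solvability. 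The genuine obstacle is clause (a), namely $\Phi \in \Upsilon_\mu [\K]$, which demands that $\EM_{\tau (\K)} (I, \Phi)$ be a member of $\K$ \emph{for every} linear order $I$ and that the functor land in $(\K, \lea)$ rather than merely in the $\tau (\K)$-substructure relation, whereas (\ref{equiv-cond-3}) only constrains reducts of size $\lambda$. I would resolve this by carrying out the construction inside presentation vocabularies of both $\K$ and $\K'$ (each of size $\le \mu$), so that $\leap{\K'}$- and $\lea$-relations are read off as substructure in the large vocabulary and the reduct of a $\leap{\K'}$-embedding becomes automatically a $\lea$-embedding; membership at sizes below $\lambda$ then follows by embedding $I$ into a linear order of size $\lambda$, applying the reduct condition to the larger EM model, and invoking the coherence axiom together with closure of $\K$ under directed unions. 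This matching of the two orderings under reduct is the main difficulty of the whole lemma.
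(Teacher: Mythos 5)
Your arguments for (\ref{equiv-cond-1}) $\Rightarrow$ (\ref{equiv-cond-2}) and (\ref{equiv-cond-2}) $\Rightarrow$ (\ref{equiv-cond-3}) are exactly the paper's: the same infinitary axiomatization of the EM models and the same passage to $\Mod(\psi)$ ordered by a fragment. The divergence is in (\ref{equiv-cond-3}) $\Rightarrow$ (\ref{equiv-cond-1}), which the paper dismisses with the single sentence ``directly from Fact \ref{EM-existence}.'' You are right that this is not direct: applying Fact \ref{EM-existence} to $\K'$ only yields $\Phi \in \Upsilon_\mu[\K']$, while membership in $\Upsilon_\mu[\K]$ further requires that $\EM_{\tau(\K)}(I,\Phi) \in \K$ for \emph{every} linear order $I$ and that order embeddings induce $\lea$-embeddings; condition (\ref{equiv-cond-3}) only controls $\K'$-models of size $\lambda$ (resp.\ $\ge \lambda$) and posits no compatibility between $\leap{\K'}$ and $\lea$. (In the paper's actual application, Theorem \ref{strong-solvable}, this is harmless, because there $\leap{\K'}$ is \emph{defined} so as to imply $\lea$ on the $\tau(\K)$-reducts and every member of $\K'$ has its reduct in $\K$; for the lemma as stated, your objection is genuine.) So on this direction you are doing more, and more carefully, than the paper.

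That said, your repair has two concrete soft spots. First, you cannot patch the $\Phi$ already extracted from $\K'$: one must rebuild the blueprint by running Morley's method on the combined pseudo-elementary class (models of $T_1 \cup T_1'$ omitting $\Gamma \cup \Gamma'$, where $T_1, \Gamma$ and $T_1', \Gamma'$ come from Shelah's presentation theorem applied to $\K$ and to $\K'$), and this requires that combined class to have arbitrarily large models. Verifying this is precisely where the reduct clause of (\ref{equiv-cond-3}) enters, and it is where the uniform and non-uniform versions part ways: uniformly, every $\K'$-model of size $\ge \lambda$ admits both expansions simultaneously, so all is well; non-uniformly one only gets models of size exactly $\lambda$, so Morley's method needs $\lambda \ge \beth_{(2^\mu)^+}$, which is not assumed. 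Your opening claim that the two versions are handled ``in parallel'' therefore hides a real issue (one which the paper's one-line proof hides as well). Second, your justification of membership at sizes below $\lambda$ --- embed $I$ into an order of size $\lambda$ and invoke coherence plus closure under directed unions --- does not work as stated: substructures of members of $\K$ need not be in $\K$, and coherence presupposes the very $\lea$-relations you are trying to produce. What actually gives membership at all sizes in your own setup is simply that every EM model of the combined blueprint has a $\tau_1$-reduct modeling $T_1$ and omitting $\Gamma$; no embedding into a large model is needed. A cleaner packaging that makes the paper's ``directly'' honest in the uniform case: apply Fact \ref{EM-existence} to the class $\K''$ of those $M \in \K'$ with $M \rest \tau(\K) \in \K$, ordered by $M \leap{\K''} N$ iff $M \leap{\K'} N$ and $M \rest \tau(\K) \lea N \rest \tau(\K)$; one checks that $\K''$ is an AEC with $\LS(\K'') \le \mu$, that condition (\ref{equiv-cond-3}) (uniform version) gives it arbitrarily large models, and that any $\Phi \in \Upsilon_\mu[\K'']$ witnesses uniform $(\lambda,\mu)$-solvability of $\K$.
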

\begin{proof} \
  \begin{itemize}
    \item \underline{(\ref{equiv-cond-1}) implies (\ref{equiv-cond-2})}:
      Let $\Phi$ witness $(\lambda, \mu)$-solvability and write $\Phi = \{p_n \mid n < \omega\}$. Let $\tau' := \tau (\Phi) \cup \{P, <\}$, where $P$, $<$ are symbols for a unary predicate and a binary relation respectively. Let $\psi \in \Ll_{\mu^+, \omega} (\tau')$ say:
      \begin{enumerate}
        \item $(P, <)$ is a linear order.
        \item For all $n < \omega$ and all $x_0 < \cdots < x_{n - 1}$ in $P$, $x_0 \ldots x_{n - 1}$ realizes $p_n$.
        \item For all $y$, there exists $n < \omega$, $x_0 < \cdots < x_{n - 1}$ in $P$, and $\rho$ an $n$-ary term of $\tau (\Phi)$ such that $y = \rho (x_0, \ldots, x_{n - 1})$.
      \end{enumerate}

       Then if $M \models \psi$, $M \rest \tau = \EM_{\tau (\K)} (P^M, \Phi)$ (and by solvability if $\|M\| = \lambda$ then $M$ is superlimit in $\K$). Conversely, if $M = \EM_{\tau (\K)} (I, \Phi)$, we can expand $M$ to a $\tau'$-structure $M'$ by letting $(P^{M'}, <^{M'}) := (I, <)$. Thus $\psi$ is as desired.
    \item \underline{(\ref{equiv-cond-2}) implies (\ref{equiv-cond-3})}: Given $\tau'$ and $\psi$ as given by (\ref{equiv-cond-2}), Let $\Psi$  be a fragment of $\Ll_{\mu^+, \omega} (\tau')$ containing $\psi$ of size $\mu$ and let $\K'$ be $\Mod (\psi)$ ordered by $\lee_{\Psi}$. Then $\K'$ is as desired for (\ref{equiv-cond-3}).
    \item \underline{(\ref{equiv-cond-3}) implies (\ref{equiv-cond-1})}: Directly from Fact \ref{EM-existence}.
  \end{itemize}
\end{proof}

\section{Forking and averages in stable AECs}\label{forking-sec}

In the introduction to his book \cite[p.~61]{shelahaecbook}, Shelah asserts (without proof) that in the first-order context solvability (see Section \ref{solvability-subsec}) is equivalent to superstability. We aim to give a proof (see Corollary \ref{solvability-fo}) and actually show (assuming amalgamation, stability, and tameness) that solvability is equivalent to any of the definitions in the main theorem. First of all, if there exists $\mu$ such that $\K$ is $(\lambda, \mu)$-solvable for all high-enough $\lambda$, then in particular $\K$ has a superlimit in all high-enough $\lambda$, so we obtain (\ref{sc0-4}) in the main corollary. We work toward a converse. The proof is similar to that in \cite{fcp-saturation}: we aim to code saturated models using their characterization with average of sequences (the crucial result for this is Lemma \ref{saturation-charact}). In this section, we  use the theory of averages in AECs (as developed by Shelah in \cite[Chapter V.A]{shelahaecbook2} and by Boney and the second author in \cite{bv-sat-v3}) to give a new characterization of forking (Lemma \ref{forking-charact}). We also prove the key result for (\ref{sc0-5}) implies (\ref{sc0-1}) in the main corollary (Theorem \ref{ss-from-chainsat}). All throughout, we assume:

\begin{hypothesis}\label{global-nf-hyp} \
  \begin{enumerate}
  \item $\K$ has a monster model $\sea$ (we work inside it).
  \item $\K$ is $\LS (\K)$-tame.
  \item $\K$ is stable in some cardinal greater than or equal to $\LS (\K)$.
  \end{enumerate}
\end{hypothesis}

  We set $\kappa := \LS (\K)^+$ and work in the setup of \cite[Section 5]{bv-sat-v3}. In particular we think of Galois types of size $\LS (\K)$ as formulas and think of bigger Galois types as set of such formulas. That is, we work inside the Galois Morleyization of $\K$ (see \cite[3.3, 3.16]{sv-infinitary-stability-afml}). We encourage the reader to have a copy of both \cite{sv-infinitary-stability-afml} and \cite{bv-sat-v3} open, since we will cite from there freely and use basic notation and terminology ($\chi$-convergent, $\chi$-based, $(\chi_0, \chi_1, \chi_2)$-Morley, $\Av_\chi (\BI / A)$ etc.) often without even an explicit citation. We will say that $p \in \gS^{<\kappa} (M)$ \emph{does not syntactically split} over $M_0 \lea M$ if it does not split in the syntactic sense of \cite[5.7]{bv-sat-v3} (that is, it does not split in the usual first-order sense when we think of Galois types of size $\LS (\K)$ as formulas). Note that several results from \cite{bv-sat-v3} that we quote assume $(<\LS (\K))$-tameness (defined in terms of Galois types over sets). However, as argued in the proof of Fact \ref{good-frame-existence}, $\LS (\K)$-tameness suffices.

  We will define several other cardinals $\chi_0 < \chi_0' < \chi_1 < \chi_1' < \chi_2$ (see Notation \ref{notation-1}, \ref{notation-2}, and \ref{notation-3}). The reader can simply see them as ``high-enough'' cardinals with reasonable closure properties. If $\chi_0$ is chosen reasonably, we will have $\chi_2 < H_1$.

The letters $\BI$, $\BJ$ will denote sequences of tuples of length strictly less than $\kappa$. We will use the same conventions as in \cite[Section 5]{bv-sat-v3}. Note that the sequences \emph{may be indexed by arbitrary linear orders}. 

By Facts \ref{stab-spectrum} and \cite[I.4.5(3)]{sh394} (recalling that there is a global assumption of stability in this section), we have:

\begin{fact}\label{stab-op}
  There exists $\chi_0 < H_1$ such that $\K$ does not have the $\LS (\K)$-order property of length $\chi_0$.
\end{fact}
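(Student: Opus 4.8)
The plan is to feed the global stability hypothesis into the stability/order-property equivalence already recorded as Fact \ref{stab-spectrum}, and then to localize the resulting failure of the order property below the Hanf number $H_1$ via the standard Ehrenfeucht--Mostowski stretching argument, which is the content of \cite[I.4.5(3)]{sh394}.

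First I would note that Hypothesis \ref{global-nf-hyp}(3) supplies stability in some cardinal $\ge \LS(\K)$. By the equivalence of (1) and (3) in Fact \ref{stab-spectrum}, this is the same as saying that $\K$ does \emph{not} have the $\LS(\K)$-order property, i.e.\ there is no family of witnesses of \emph{all} lengths. Since an order-property witness of length $\alpha$ restricts to one of any length $\beta \le \alpha$, the failure of the (unbounded) order property is equivalent to the existence of \emph{some} ordinal $\chi_0$ at which no witness exists. The whole substance of the statement is therefore that this $\chi_0$ may be taken below $H_1$.

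The key step, and the one I expect to be the main obstacle, is exactly this localization: showing that if $\K$ had the $\LS(\K)$-order property of length $\chi$ for \emph{every} $\chi < H_1$, then it would have it of all lengths. This is a Hanf-number dichotomy, proved by coding. One regards an order-property witness of length $\chi$ as (the $\tau(\K)$-reduct of) a structure in an auxiliary class obtained by expanding $\sea$ with names for the witnessing tuples together with a linear order on their indices; because the defining condition only refers to Galois types of tuples of length $\le \LS(\K)$, it is expressible in the associated syntactic (Galois-Morleyization) setting with parameters of size $\LS(\K)$. The number of such types is bounded by $2^{\LS(\K)}$, so the relevant Hanf number for the existence of arbitrarily long such structures is precisely $H_1 = \hanfe{\LS(\K)}$. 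Hence witnesses of length $\chi$ for all $\chi < H_1$ produce, by Morley's method, an EM blueprint generating order-indiscernible witnesses, which can be stretched to any length, giving the $\LS(\K)$-order property in full.

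Finally I would combine the two halves by contraposition. The stretching argument shows that ``order property of length $\chi$ for all $\chi < H_1$'' implies the unbounded order property; the first step showed the latter fails; therefore there is some $\chi_0 < H_1$ with no witness of length $\chi_0$, which is exactly the assertion. Given Fact \ref{stab-spectrum} and the cited Hanf-number result \cite[I.4.5(3)]{sh394}, everything beyond the coding bookkeeping is routine.
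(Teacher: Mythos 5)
Your proof is correct and matches the paper's own argument exactly: the paper derives this fact by combining Fact \ref{stab-spectrum} (stability in some cardinal is equivalent to failure of the $\LS(\K)$-order property) with the Hanf-number localization \cite[I.4.5(3)]{sh394}, which is precisely your two-step contrapositive. The EM/stretching sketch you give is the content of the cited Shelah result, so beyond that citation nothing further is needed.
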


Another property of $\chi_0$ is the following more precise version of Fact \ref{stab-spectrum} (see \cite{sv-infinitary-stability-afml} on how to translate Shelah's syntactic version to AECs):

\begin{fact}[Theorem V.A.1.19 in \cite{shelahaecbook2}]\label{stab-spectrum-2}
  If $\lambda = \lambda^{\chi_0}$, then $\K$ is stable in $\lambda$. In particular, $\K$ is stable in $\chi_0'$.
\end{fact}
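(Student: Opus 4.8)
The plan is to reduce Galois stability to a syntactic type-counting problem inside the Galois Morleyization $\bigL$ of $\K$ and then run Shelah's local stable-formula counting argument there; this is exactly the translation alluded to in the parenthetical after the statement. First I would recall that, working in the Galois Morleyization, a Galois type over $M$ of length $\le \LS (\K)$ is coded by a set of formulas $\phi (\bx; \by)$ of $\bigL$ with $|\by| \le \LS (\K)$, each such formula being itself (the realization predicate of) a Galois type over an $\le \LS (\K)$-sized set. By $\LS (\K)$-tameness, two distinct Galois types over $M \in \K_\lambda$ disagree on the restriction to some $\le \LS (\K)$-sized submodel, hence on some such formula with parameters in $M$; thus the map sending a Galois type over $M$ to the corresponding complete syntactic $\bigL$-type over $M$ is injective. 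It therefore suffices to bound the number of complete syntactic types over $M$, and since a complete type is the union of its $\phi$-restrictions, it is enough to bound, for each fixed $\phi$, the number of $\phi$-types $S_\phi (M)$.

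The crux is this per-formula bound. Fix $\phi (\bx; \by)$. By Fact \ref{stab-op} the hypothesis is that $\K$ has no $\LS (\K)$-order property of length $\chi_0$, which in $\bigL$ says that no formula has the (syntactic) order property of length $\chi_0$. The step I expect to be the main obstacle is the local stability-counting inequality $|S_\phi (M)| \le \lambda^{<\chi_0}$ for $\|M\| = \lambda$: if there were more than $\lambda^{<\chi_0}$ many $\phi$-types over $M$, a ranking together with an Erd\H{o}s--Rado extraction would produce a $\phi$-indexed sequence of length $\chi_0$ witnessing the order property, contradicting the hypothesis. This is precisely Shelah's unstable-formula-theorem counting bound, carried out in the infinitary syntax; checking that the extraction still goes through for formulas whose parameter tuples $\by$ have length up to $\LS (\K)$ (rather than finite) is where the genuine work lies.

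Finally I would assemble the global bound. Since failing to have the order property of length $\chi_0$ entails failing to have it at every length $\ge \chi_0$ (restrict a long witnessing sequence to its first $\chi_0$ elements), I may freely enlarge $\chi_0$ below $H_1$; in particular I arrange $\chi_0 \ge |\bigL|$, which is legitimate because $|\bigL| \le 2^{\LS (\K)} < H_1$. From the hypothesis $\lambda = \lambda^{\chi_0}$ we get $\lambda^{<\chi_0} = \lambda$, so each $|S_\phi (M)| \le \lambda$, and hence
\[
  |\gS (M)| \le \prod_{\phi \in \bigL} |S_\phi (M)| \le \lambda^{|\bigL|} \le \lambda^{\chi_0} = \lambda .
\]
Therefore $\K$ is stable in $\lambda$. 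For the ``in particular'' clause, $\chi_0'$ is chosen so that $(\chi_0')^{\chi_0} = \chi_0'$ (a suitable fixed point of $\mu \mapsto \mu^{\chi_0}$ below $H_1$, which exists), and then stability in $\chi_0'$ is the special case $\lambda = \chi_0'$.
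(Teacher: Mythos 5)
The paper does not prove this Fact at all: it is quoted directly from Shelah (Theorem V.A.1.19 of \cite{shelahaecbook2}), with the translation to AECs delegated to the Galois Morleyization machinery of \cite{sv-infinitary-stability-afml}. Your proposal reconstructs precisely that route --- tameness makes the map from Galois types over $M$ to complete syntactic types of the Morleyization injective, failure of the order property bounds each $S_\phi (M)$, and taking the product over the at most $2^{\LS (\K)}$ formulas yields stability --- so in approach you agree with the paper's (cited) proof, and you correctly isolate the per-formula counting bound as the part that genuinely requires Shelah's argument.

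Two repairs are needed, though. First, the move ``I may freely enlarge $\chi_0$'' is backwards: replacing $\chi_0$ by a larger cardinal \emph{strengthens} the hypothesis $\lambda = \lambda^{\chi_0}$, so you would be proving a weaker statement than the one asserted, not the same one. Fortunately the enlargement is unnecessary: Notation \ref{notation-1} requires $\chi_0 \ge 2^{\LS (\K)}$, and the Morleyization has at most $2^{\LS (\K)}$ formulas, so $|\bigL| \le \chi_0$ holds outright. Second, $\chi_0'$ is not ``chosen'' to be a fixed point of $\mu \mapsto \mu^{\chi_0}$; it is defined explicitly as $\chi_0' = \left(2^{2^{\chi_0}}\right)^+$, so the ``in particular'' clause requires you to verify $(\chi_0')^{\chi_0} = \chi_0'$. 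This is a short computation via Hausdorff's formula: $\left(\left(2^{2^{\chi_0}}\right)^+\right)^{\chi_0} = \left(2^{2^{\chi_0}}\right)^{\chi_0} \cdot \left(2^{2^{\chi_0}}\right)^+ = 2^{2^{\chi_0}} \cdot \left(2^{2^{\chi_0}}\right)^+ = \chi_0'$, but it must be said, since the statement is about this specific cardinal.
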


The following notation will be convenient:

\begin{notation}\label{notation-1}
  Let $\chi_0$ be any regular cardinal such that $\chi_0 \ge 2^{\LS (\K)}$ and $\K$ does not have the $\LS (\K)$-order property of length $\chi_0^+$. For a cardinal $\lambda$, let $\gamma (\lambda) := (2^{2^{\lambda}})^+$. We write $\chi_0' := \gamma (\chi_0)$.
\end{notation}
\begin{remark}\label{rmk-hanf}
  By Fact \ref{stab-op}, one can take $\chi_0 < H_1$. In that case also $\chi_0' < H_1$. For the sake of generality, we do \emph{not} require that $\chi_0$ be least with the property above.
\end{remark}

Recall \cite[5.21]{bv-sat-v3} that if $\BI$ is a $(\chi_0^+, \chi_0^+, \gamma (\chi_0))$-Morley sequence, then $\BI$ is $\chi$-convergent. We want to use this to relate average and forking:

\begin{defin}
  Let $M_0, M \in \Ksatp{(\chi_0')^+}$ be such that $M_0 \lea M$. Let $p \in \gS (M)$. We say that \emph{$p$ does not fork over $M_0$} if there exists $M_0' \in \K_{\chi_0'}$ such that $M_0' \lea M_0$ and $p$ does not $\chi_0'$-split over $M_0'$.
\end{defin}

We will use without comments:

\begin{fact}\label{forking-props}
  Forking has the following properties:

  \begin{enumerate}
    \item Invariance under isomorphisms and monotonicity: if $M_0 \lea M_1 \lea M_2$ are all $(\chi_0')^+$-saturated and $p \in \gS (M_2)$ does not fork over $M_0$, then $p \rest M_1$ does not fork over $M_0$ and $p$ does not fork over $M_1$.
    \item\label{forking-props-2} Set local character: if $M \in \Ksatp{(\chi_0')^+}$ and $p \in \gS (M)$, there exists $M_0 \in \Ksatp{(\chi_0')^+}$ of size $(\chi_0')^+$ such that $M_0 \lea M$ and $p$ does not fork over $M_0$.
    \item Transitivity: Assume $M_0 \lea M_1 \lea M_2$ are all $(\chi_0')^+$-saturated and $p \in \gS (M_2)$. If $p$ does not fork over $M_1$ and $p \rest M_1$ does not fork over $M_0$, then $p$ does not fork over $M_0$.
    \item\label{forking-props-4} Uniqueness: If $M_0 \lea M$ are all $(\chi_0')^+$-saturated and $p, q \in \gS (M)$ do not fork over $M_0$, then $p \rest M_0 = q \rest M_0$ implies $p = q$. Moreover $p$ does not $\lambda$-split over $M_0$ for any $\lambda \ge \left(\chi_0'\right)^+$.
    \item\label{forking-props-5} Local extension over saturated models: If $M_0 \lea M$ are both saturated, $\|M_0\| = \|M\| \ge (\chi_0')^+$, $p \in \gS (M_0)$, there exists $q \in \gS (M)$ such that $q$ extends $p$ and does not fork over $M_0$.
  \end{enumerate}
\end{fact}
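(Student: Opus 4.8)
The plan is to reduce each clause to the corresponding property of $\chi_0'$-splitting, using that $\K$ is stable in $\chi_0'$ (Fact~\ref{stab-spectrum-2}) together with $\LS (\K)$-tameness; the splitting inputs I invoke are all available from the first ten sections of \cite{indep-aec-apal} and ultimately go back to Grossberg--VanDieren. Clause (1) is immediate: $\chi_0'$-splitting is invariant under isomorphisms, and if $p$ does not $\chi_0'$-split over $M_0' \lea M_0$ of size $\chi_0'$, then the same $M_0'$ witnesses both that $p \rest M_1$ does not fork over $M_0$ (restriction preserves non-splitting) and, since $M_0' \lea M_0 \lea M_1$, that $p$ does not fork over $M_1$.

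For set local character (\ref{forking-props-2}), I would first produce a small non-splitting base: since $M$ is $(\chi_0')^+$-saturated and $\K$ is stable in $\chi_0'$ (Fact~\ref{stab-spectrum-2}), the local character of $\chi_0'$-splitting yields some $M_0' \lea M$ of size $\chi_0'$ with $p$ not $\chi_0'$-splitting over $M_0'$. Using that $M$ is $(\chi_0')^+$-saturated I then interpolate a saturated model $M_0$ of size $(\chi_0')^+$ with $M_0' \lea M_0 \lea M$, and this $M_0$ is as required.

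The crux for the remaining clauses is an upgrade lemma: if $M_0$ is $(\chi_0')^+$-saturated and $p$ does not $\chi_0'$-split over some $M_0' \lea M_0$ of size $\chi_0'$, then $p$ does not $\lambda$-split over $M_0$ for every $\lambda \ge (\chi_0')^+$. This is exactly the ``moreover'' of (\ref{forking-props-4}), and it is where tameness carries the weight: one uses that $M_0$ is universal over $M_0'$ and that tameness lets any purported $\lambda$-splitting of $p$ over $M_0$ be detected on small submodels, contradicting non-$\chi_0'$-splitting over $M_0'$. I expect this upgrade, together with the bookkeeping between the \emph{small} splitting base $M_0'$ and the \emph{saturated} forking base $M_0$, to be the main obstacle; the rest is formal.

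Granting the upgrade, the other clauses follow from the standard splitting calculus over the saturated bases. For uniqueness (\ref{forking-props-4}), I upgrade both $p$ and $q$ to non-splitting over $M_0$; since $M$ is universal over $M_0$ (being saturated of size at least $\|M_0\|$), the uniqueness of non-splitting extensions to universal models yields $p = q$ from $p \rest M_0 = q \rest M_0$. For transitivity (3), I upgrade the two hypotheses to non-splitting of $p$ over $M_1$ and of $p \rest M_1$ over $M_0$, apply transitivity of splitting along the universal chain $M_0 \lea M_1 \lea M_2$ to obtain non-splitting of $p$ over $M_0$, and then read off non-forking over $M_0$. For local extension (\ref{forking-props-5}), I take by (\ref{forking-props-2}) a size-$\chi_0'$ base $M_0' \lea M_0$ over which $p$ does not $\chi_0'$-split and use the extension property of non-splitting (available from tameness) to find $q \in \gS (M)$ extending $p$ and still not $\chi_0'$-splitting over $M_0'$; then $q$ does not fork over $M_0$.
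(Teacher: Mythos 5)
Your clauses (1) and (2) are correct, and so is your ``upgrade lemma'': if $p$ does not $\chi_0'$-split over $M_0' \lea M_0$, then by $\LS(\K)$-tameness any witness $(N_1,N_2,f)$ to $\lambda$-splitting over $M_0$ can be shrunk --- since $f$ fixes $M_0 \supseteq M_0'$ pointwise, one takes a small $N_2^- \lea N_2$ containing $M_0'$ on which the two types differ and sets $N_1^- := f^{-1}[N_2^-]$ --- to a witness of $\chi_0'$-splitting over $M_0'$ itself; no universality is needed for this, and together with base monotonicity of non-splitting it gives the moreover part of (\ref{forking-props-4}). (For the record, the paper does not reprove any of this: it quotes \cite[7.5]{indep-aec-apal} for the main clauses and \cite[4.2, 3.12]{bgkv-apal} for the moreover part.) However, you have the difficulty exactly backwards: the upgrade is the easy step, while the clauses you dismiss as ``formal'' --- uniqueness, transitivity, and local extension --- are where the content lies, and each of your arguments for them invokes a fact that is not available in this setting.

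For uniqueness, weak uniqueness of non-splitting extensions requires the two types to agree on a model \emph{universal over the non-splitting base}; after upgrading $p,q$ to non-splitting over $M_0$ you only have agreement on $M_0$ itself, which does not suffice (non-splitting over $M_0$ plus agreement on $M_0$ has no uniqueness in general: think of the two edges of a cut in a dense linear order). Your patch, ``$M$ is universal over $M_0$ (being saturated of size at least $\|M_0\|$)'', is false when $\|M\|=\|M_0\|$, a case the Fact allows: saturated models are universal over submodels of \emph{strictly smaller} cardinality only. The correct argument keeps the small bases: merge them into one $M'' \lea M_0$ of size $\chi_0'$, use $(\chi_0')^+$-saturation of $M_0$ to find $M''' \lea M_0$ of size $\chi_0'$ universal over $M''$, note that agreement on $M_0$ restricts to agreement on $M'''$, and finish by weak uniqueness over small domains plus tameness. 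For transitivity, there is no ``transitivity of splitting along the universal chain $M_0 \lea M_1 \lea M_2$'': the $M_i$ are merely saturated (possibly all of the same cardinality), so no universality along the chain is given, and non-splitting has no unconditional transitivity; the proof instead embeds small submodels of $M_2$ into $M_1$ over a small universal pair inside $M_1$ (using saturation of $M_1$) and applies weak uniqueness twice. Worst of all, for (\ref{forking-props-5}) you invoke ``the extension property of non-splitting (available from tameness)'' --- this is precisely the property the paper warns need not hold without no long splitting chains (see the remark before Fact \ref{union-sat}); indeed your argument nowhere uses that $\|M_0\|=\|M\|$ and both are saturated, so if it worked it would prove full extension, which fails in this generality. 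The genuine proof of local extension transfers $p$ along an isomorphism $h : M \cong M_0$ fixing a small universal pair inside $M_0$ (uniqueness of same-size saturated models over a common small submodel) and then uses weak uniqueness plus tameness to check that $h^{-1}(p)$ really extends $p$. So clauses (3), (4) and (5) all require arguments substantially different from the ones you give.
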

\begin{proof}
  Use \cite[7.5]{indep-aec-apal}. The generator used is the one given by Proposition 7.4(2) there. For the moreover part of uniqueness, use \cite[4.2]{bgkv-apal} (and \cite[3.12]{bgkv-apal}).
\end{proof}

Note that the extension property need not hold in general. However if the class has no long splitting chains we have:

\begin{fact}\label{union-sat}
  If $\K$ has no long splitting chains in $\chi_0'$, then:

  \begin{enumerate}
  \item (\cite[8.9]{indep-aec-apal} or \cite[7.1]{ss-tame-jsl}) Forking has:
    \begin{enumerate}
      \item The extension property: If $M_0 \lea M$ are $(\chi_0')^+$-saturated and $p \in \gS (M_0)$, then there exists $q \in \gS (M)$ extending $p$ and not forking over $M_0$.
      \item The chain local character property: If $\seq{M_i : i < \delta}$ is an increasing chain of $(\chi_0')^+$-saturated models and $p \in \gS (\bigcup_{i < \delta} M_i)$, then there exists $i < \delta$ such that $p$ does not fork over $M_i$.
    \end{enumerate}
  \item \cite[6.9]{bv-sat-v3} For any $\lambda > (\chi_0')^+$, $\Ksatp{\lambda}$ is an AEC with $\LS (\Ksatp{\lambda}) = \lambda$.
  \end{enumerate}
\end{fact}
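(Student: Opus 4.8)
The plan is to treat Fact \ref{forking-props} as the engine and to supply only the two ingredients it lacks. That fact already provides invariance, monotonicity, set local character, transitivity, uniqueness, and \emph{local} extension (over saturated models of equal size), all from stability and tameness alone; the hypothesis of no long splitting chains should be used precisely to upgrade these to the full extension property and the chain local character of part (1), from which part (2) will then follow.

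I would establish the chain local character property first, since it is the most direct consequence of Definition \ref{loc-card-def}. Given an increasing chain $\seq{M_i : i < \delta}$ of $(\chi_0')^+$-saturated models with union $M_\delta$ and $p \in \gS(M_\delta)$, reduce to the case $\delta = \cf{\delta}$. Using that each $M_i$ is $(\chi_0')^+$-saturated, extract a $\lea$-increasing continuous chain $\seq{N_j : j < \delta}$ in $\K_{\chi_0'}$ with $N_j \lea M_j$ and each $N_{j+1}$ universal over $N_j$, arranged so that $\bigcup_j N_j$ is rich enough that any $\chi_0'$-splitting of $p$ over a size-$\chi_0'$ base is witnessed inside it (here tameness and the localization of forking to a size-$\chi_0'$ submodel in the definition of forking are what let us pass to small models). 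Applying no long splitting chains in $\chi_0'$ to this chain yields $j < \delta$ with the relevant restriction of $p$ not $\chi_0'$-splitting over $N_j$; since $N_j \lea M_j$, unwinding the definition gives that $p$ does not fork over $M_j$.

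For the extension property I would combine set local character with a non-splitting extension. Given $(\chi_0')^+$-saturated $M_0 \lea M$ and $p \in \gS(M_0)$, Fact \ref{forking-props}(\ref{forking-props-2}) places a small $M_0' \in \K_{\chi_0'}$ below $M_0$ over which $p$ does not $\chi_0'$-split; since $M_0$ is $(\chi_0')^+$-saturated it is universal over $M_0'$. One then wants a non-$\chi_0'$-splitting extension $q \in \gS(M)$ of $p$, which does not fork over $M_0$ by definition: uniqueness of such a $q$ is Fact \ref{forking-props}(\ref{forking-props-4}), while its existence, when $M$ is much larger than $M_0$, is obtained by extending $p$ step by step along a resolution of $M$ into saturated models, the limit stages being controlled by the chain local character just proved together with uniqueness. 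This is the step where no long splitting chains is genuinely needed, and where I expect the main difficulty to lie: one must verify that the coherent union of non-forking extensions along the resolution is again a non-forking extension, i.e.\ that non-forking survives the passage through unions.

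Finally, for part (2) the decisive axiom is that the union of a $\lea$-increasing chain of $\lambda$-saturated models is again $\lambda$-saturated; this is exactly the superstable content of no long splitting chains, carried out via the average-of-Morley-sequences characterization of saturation in \cite[6.9]{bv-sat-v3} (with forking, as above, controlling the small base on which each type is based). Granting it, the remaining AEC axioms are inherited from $\K$, and $\LS(\Ksatp{\lambda}) = \lambda$ follows from a routine downward Löwenheim--Skolem plus saturation argument: given $A$ of size $\le \lambda$ inside a $\lambda$-saturated model, iterate closing under realizations of types over size-$(<\lambda)$ submodels and under $\lea$-submodels of size $\lambda$, using stability below $\lambda$ (Fact \ref{stab-spectrum-2}) to keep the number of types bounded, producing a $\lambda$-saturated submodel of size $\lambda$ containing $A$. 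As in part (1), the one real obstacle is the union passage---showing saturation is preserved under increasing unions---and it is there that no long splitting chains, rather than mere stability, is indispensable.
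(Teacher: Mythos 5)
The paper itself contains \emph{no proof} of this statement: it is quoted as a Fact, with part (1) attributed to \cite[8.9]{indep-aec-apal} and \cite[7.1]{ss-tame-jsl} and part (2) to \cite[6.9]{bv-sat-v3}. So your proposal can only be measured against those external proofs. Your architecture is the right one and matches theirs: Fact \ref{forking-props} supplies everything that follows from stability and tameness alone, and no long splitting chains is needed precisely for extension, chain local character, and closure of $\Ksatp{\lambda}$ under unions. However, two of your steps have genuine gaps.

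First, in the chain local character argument, the clause ``arranged so that $\bigcup_j N_j$ is rich enough that any $\chi_0'$-splitting of $p$ over a size-$\chi_0'$ base is witnessed inside it'' is impossible as stated: witnesses to splitting are pairs of size-$\chi_0'$ models ranging over all of $M_\delta := \bigcup_{i<\delta} M_i$, which can have arbitrary cardinality, so no model of size $\chi_0'$ can capture them all. What can be done---and is the missing bookkeeping---is to catch \emph{one} witness pair for each of the $\delta$-many bases $N_j$ actually constructed (chosen at stage $j$, when $N_j$ is determined), distributing its elements over the later stages: the piece of the witness lying in $M_{j''}$ goes into $N_{j''+1} \lea M_{j''+1}$, so that at the end the whole witness sits inside $\bigcup_j N_j$ by coherence. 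This keeps sizes at $\chi_0'$ when $\cf{\delta} \le \chi_0'$, while the case $\cf{\delta} > \chi_0'$ follows from set local character (Fact \ref{forking-props}(\ref{forking-props-2})) alone. Second, and more seriously, your extension argument---``extending $p$ step by step along a resolution of $M$, the limit stages being controlled by the chain local character just proved together with uniqueness''---presupposes that at a limit stage the increasing chain of types $q_\alpha$ has \emph{some} upper bound in $\gS$ of the union. In an AEC this can fail outright (there is no compactness); chain local character and uniqueness only constrain a union type if it exists, they do not produce one. This existence problem is exactly the content of the cited results, and overcoming it requires a mechanism you do not supply: either carrying a coherent directed system of realizations and taking direct limits, or constructing the nonsplitting extension directly by embedding $M$ into a universal extension of $M_0$ over the small base and pulling the type back---and both devices need stability (hence universal or saturated extensions) in $\|M\|$, which is where no long splitting chains genuinely enters via Fact \ref{ss-stable}. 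Finally, in part (2) your L\"owenheim--Skolem argument for $\LS(\Ksatp{\lambda}) = \lambda$ is routine only for regular $\lambda$ (where a size-$<\lambda$ submodel of the union sits inside a member of the chain); for singular $\lambda$ it again needs the average/Morley-sequence machinery of \cite{bv-sat-v3} that you already invoke for the union axiom.
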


For notational convenience, we ``increase'' $\chi_0$:

\begin{notation}\label{notation-2}
  Let $\chi_1 := (\chi_0')^{++}$. Let $\chi_1' := \gamma (\chi_1)$.
\end{notation}

We obtain a characterization of forking that adds to those proven in \cite[Section 9]{indep-aec-apal}. A form of it already appears in \cite[IV.4.6]{shelahaecbook}. Again, we define more cardinal parameters:

\begin{notation}\label{notation-3}
  Let $\chi_2 := \beth_{\omega} (\chi_0)$.
\end{notation}
\begin{remark}\label{rmk-hanf-2}
  We have that $\chi_0 < \chi_0' < \chi_1 < \chi_1' < \chi_2$, and $\chi_2 < H_1$ if $\chi_0 < H_1$.
\end{remark}

\begin{lem}\label{forking-charact}
  Let $M_0, M$ be $\chi_2$-saturated with $M_0 \lea M$. Let $p \in \gS (M)$. The following are equivalent:

  \begin{enumerate}
    \item\label{forking-char-1} $p$ does not fork over $M_0$.
    \item\label{forking-char-2} $p \rest M_0$ has a nonforking extension to $\gS (M)$ and there exists $M_0' \lea M_0$ with $\|M_0'\| < \chi_2$ such that $p$ does not syntactically split over $M_0'$.
    \item\label{forking-char-3} $p \rest M_0$ has a nonforking extension to $\gS (M)$ and there exists $\mu \in [\chi_0^+, \chi_2)$ and $\BI$ a $(\mu, \mu, \gamma (\mu)^+)$-Morley sequence for $p$, with all the witnesses inside $M_0$, such that $\Av_{\gamma (\mu)} (\BI / M) = p$.
  \end{enumerate}
\end{lem}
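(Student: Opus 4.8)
The plan is to prove the cycle of implications $(\ref{forking-char-1}) \Rightarrow (\ref{forking-char-2}) \Rightarrow (\ref{forking-char-3}) \Rightarrow (\ref{forking-char-1})$. Before starting, I would point out that the clause ``$p \rest M_0$ has a nonforking extension to $\gS (M)$'' occurring in both (\ref{forking-char-2}) and (\ref{forking-char-3}) is automatic from (\ref{forking-char-1}): if $p$ does not fork over $M_0$ then, by monotonicity (Fact \ref{forking-props}), $p$ itself is a nonforking extension of $p \rest M_0$. This clause is genuinely needed in the other two conditions precisely because we do \emph{not} assume no long splitting chains here, so the extension property (Fact \ref{union-sat}) may fail; a non-splitting or average representation of $p$ alone will therefore not recover nonforking, and the existence of the nonforking extension is exactly the missing ingredient that upgrades ``$p$ is represented by something based in $M_0$'' to ``$p$ does not fork over $M_0$''.

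For $(\ref{forking-char-1}) \Rightarrow (\ref{forking-char-2})$, the clause holds via $p$ itself as just noted. For the syntactic non-splitting, I would unwind the definition of forking to obtain $M_0' \in \K_{\chi_0'}$ with $M_0' \lea M_0$ over which $p$ does not $\chi_0'$-split, and then translate this AEC non-splitting into syntactic non-splitting over the same model through the dictionary of the Galois Morleyization; this is where I invoke the comparison between the two splitting notions from \cite[Section 5]{bv-sat-v3}. Since $\|M_0'\| = \chi_0' < \chi_2$, taking $M_0'' := M_0'$ yields (\ref{forking-char-2}). For $(\ref{forking-char-2}) \Rightarrow (\ref{forking-char-3})$, I would carry the clause over unchanged and fix $\mu$ with $\max (\chi_0^+, \|M_0''\|) \le \mu < \chi_2$; since $\chi_2 = \beth_\omega (\chi_0)$ is a strong limit, also $\gamma (\mu)^+ < \chi_2$. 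Using that $M_0$ is $\chi_2$-saturated, I would build by a recursion of length $\mu$ a sequence $\BI$ inside $M_0$ whose elements successively realize the (syntactically) non-splitting extension of the previous data over $M_0''$; routine bookkeeping makes $\BI$ a $(\mu, \mu, \gamma (\mu)^+)$-Morley sequence with all witnesses in $M_0$, and by \cite[5.21]{bv-sat-v3} it is $\gamma (\mu)$-convergent. Finally, non-syntactic-splitting of $p$ over $M_0''$ forces the average to recompute $p$, giving $\Av_{\gamma (\mu)} (\BI / M) = p$.

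The hard part will be $(\ref{forking-char-3}) \Rightarrow (\ref{forking-char-1})$. Here I would let $q \in \gS (M)$ be the nonforking extension of $p \rest M_0$ supplied by the clause, and apply the \emph{already-proven} implication $(\ref{forking-char-1}) \Rightarrow (\ref{forking-char-3})$ to $q$ to obtain a convergent Morley sequence $\BJ$ with witnesses in $M_0$ and average equal to $q$. Now both $p$ and $q$ are averages of convergent Morley sequences whose witnesses lie in $M_0$; hence both fail to split over small submodels of $M_0$ and so are determined on all of $M$ by their restrictions to $M_0$. Since $p \rest M_0 = q \rest M_0$, this forces $\Av (\BI / M) = \Av (\BJ / M)$, that is $p = q$, and therefore $p$ does not fork over $M_0$. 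The delicate point—where I expect the main obstacle to lie—is the penultimate step, that an average based inside $M_0$ is determined on $M$ by its restriction to $M_0$. This rests on the ``based'' half of convergence together with uniqueness of non-splitting extensions (in the style of Fact \ref{forking-props}(\ref{forking-props-4})), and it is here that the standing stability hypothesis is used in full, through the convergence bounds of \cite{bv-sat-v3} governing the parameters $\chi_0 < \chi_0' < \chi_1 < \chi_1' < \chi_2$.
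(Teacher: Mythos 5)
Your overall architecture (the cycle $(\ref{forking-char-1}) \Rightarrow (\ref{forking-char-2}) \Rightarrow (\ref{forking-char-3}) \Rightarrow (\ref{forking-char-1})$, the observation that the nonforking-extension clause is automatic under $(\ref{forking-char-1})$, and the uniqueness argument in the last step) is sound and close to the paper's, whose last step likewise pits two representations of $p$ and of the nonforking extension $q$ against each other and concludes $p = q$. But there is a genuine gap in your proof of $(\ref{forking-char-1}) \Rightarrow (\ref{forking-char-2})$. You propose to unwind the definition of forking to get $M_0' \in \K_{\chi_0'}$ with $p$ not $\chi_0'$-splitting over $M_0'$, and then ``translate this AEC non-splitting into syntactic non-splitting over the same model'' via the Galois Morleyization. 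No such translation exists, and the comparison results available go in the \emph{opposite} direction only: syntactic non-splitting implies Galois non-splitting. The reason the converse fails is structural: syntactic splitting is witnessed by a pair of \emph{tuples} $\bb, \bc \in M$ realizing the same (Galois $=$ syntactic) type over $M_0'$, and the automorphism of the monster model moving $\bb$ to $\bc$ over $M_0'$ need not carry any $\lea$-substructure of $M$ containing $\bb$ onto a substructure of $M$; so from non-$\chi_0'$-splitting (which only quantifies over pairs of isomorphic \emph{models inside} $M$) you get no control over such tuple witnesses. As written, this step fails, and everything downstream (including your appeal to the ``already-proven'' $(\ref{forking-char-1}) \Rightarrow (\ref{forking-char-3})$ inside the last step) depends on it.

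The paper closes exactly this hole with a detour that never tries to convert Galois non-splitting into syntactic non-splitting. It first applies Fact \ref{ns-lc} (a stability consequence: every type over a $\chi_0^+$-saturated model does not syntactically split over some small submodel) to get $M_0' \lea M_0$ with $p \rest M_0$ syntactically non-splitting over $M_0'$; it then enlarges $M_0'$ to be $\chi_1$-saturated and, using set local character and transitivity of forking (Fact \ref{forking-props}), arranges that $p$ does not fork over $M_0'$. Next it builds a Morley sequence $\BI$ for $p$ over $M_0'$ inside $M_0$, which is convergent and based on $M_0'$ by \cite[5.20, 5.21]{bv-sat-v3}; since $p$ does not fork over $M_0'$, Fact \ref{average-splitting} gives $\Av (\BI / M) = p$, and basedness then yields that $p$ itself (not merely $p \rest M_0$) does not syntactically split over $M_0'$. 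Note that this is precisely the averages-plus-basedness machinery you invoke in your $(\ref{forking-char-3}) \Rightarrow (\ref{forking-char-1})$ step, so the repair is available to you; but the direct ``dictionary'' translation you rely on is not a theorem, and the first implication of your cycle does not go through without replacing it by an argument of this kind.
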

\begin{remark}
  When $\K$ has no long splitting chains in $\chi_0'$, forking has the extension property (Fact \ref{union-sat}) so the first part of (\ref{forking-char-2}) and (\ref{forking-char-3}) always hold. However in Theorem \ref{ss-from-chainsat} we apply Lemma \ref{forking-charact} in the strictly stable case (i.e.\ $\K$ may only be \emph{stable} in $\chi_0'$ and not have no long splitting chains there).
\end{remark}

We recall more definitions and facts before giving the proof of Lemma \ref{forking-charact}:

\begin{fact}[V.A.1.12 in \cite{shelahaecbook2}]\label{ns-lc}
  If $p \in \gS (M)$ and $M$ is $\chi_0^+$-saturated, there exists $M_0 \in \K_{\le \chi_0}$ with $M_0 \lea M$ such that $p$ does not syntactically split over $M_0$.
\end{fact}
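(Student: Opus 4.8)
The plan is to prove the statement by contradiction: assume $p$ syntactically splits over \emph{every} $M_0 \lea M$ with $M_0 \in \K_{\le \chi_0}$, and derive the $\LS (\K)$-order property of length $\chi_0^+$, contradicting the choice of $\chi_0$ (Notation \ref{notation-1}). First I would build, by recursion on $i < \chi_0^+$, an increasing continuous chain $\seq{M_i : i < \chi_0^+}$ of $\lea$-submodels of $M$, each of size $\le \chi_0$, together with witnessing tuples. At stage $i$, since $\|M_i\| \le \chi_0$ the splitting assumption yields tuples $\ba_i, \bb_i$ from $M$ and a Morleyization formula $\phi_i$ with $\gtp (\ba_i / M_i) = \gtp (\bb_i / M_i)$, $\phi_i (x; \ba_i) \in p$, and $\neg \phi_i (x; \bb_i) \in p$. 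I then choose $N_i \lea M$ of size $\le \chi_0$ containing $M_i$ and the tuples $\ba_i, \bb_i$, use the $\chi_0^+$-saturation of $M$ to realize $p \rest N_i$ by some $c_i \in M$ (legitimate since $\|N_i\| \le \chi_0 < \chi_0^+$), and set $M_{i + 1} \lea M$ of size $\le \chi_0$ to contain $N_i$ and $c_i$. The key design feature is that each realization $c_i$ is fed \emph{forward} into all later models of the chain.

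Two local computations then run, after a pigeonhole step. Since the number of Morleyization formulas of the relevant arity is at most $2^{\LS (\K)} \le \chi_0$, regularity of $\chi_0^+$ lets me thin the chain to a subsequence of length $\chi_0^+$ on which the $\phi_i$ are a single fixed $\phi$, and passing to a subsequence preserves the $\lea$-relations I need. Write $\theta (x; \by \by') := \phi (x; \by) \wedge \neg \phi (x; \by')$ and $\bd_i := \ba_i {}^\frown \bb_i$. For $j \le i$, the tuples $\ba_j, \bb_j$ lie in $N_i$ and $c_i \models p \rest N_i$, so $\phi (c_i; \ba_j)$ holds and $\phi (c_i; \bb_j)$ fails, giving $\models \theta (c_i; \bd_j)$. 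For $j > i$, the earlier realization $c_i$ lies in $M_{i + 1} \subseteq M_j$, while $\gtp (\ba_j / M_j) = \gtp (\bb_j / M_j)$; applying an automorphism in $\Aut (\sea / M_j)$ that fixes $c_i$ and sends $\ba_j$ to $\bb_j$ (together with the $\Aut (\sea)$-invariance of Morleyization formulas) shows $\phi (c_i; \ba_j) \leftrightarrow \phi (c_i; \bb_j)$, hence $\models \neg \theta (c_i; \bd_j)$.

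Putting $\bar{e}_i := c_i {}^\frown \bd_i$, these two computations say exactly that $\models \theta (c_i; \bd_j)$ if and only if $j \le i$, so reading $\theta$ as a relation on the $\bar{e}_i$ exhibits the $\LS (\K)$-order property of length $\chi_0^+$, the desired contradiction. I expect the main obstacle to be controlling the ``upper triangle'' $j > i$: naively, a type that splits over all small models only produces a half-pattern, and the uncontrolled values of $\phi (c_i; \ba_j)$ for $j > i$ would otherwise force an extra Erd\H{o}s--Rado homogenization on a chain too short to guarantee a long homogeneous set. The device that avoids this is precisely feeding each $c_i$ forward, so that the \emph{later} splitting witnesses become indiscernible over it; this is also where $\chi_0^+$-saturation of $M$ is essential, since it is what allows the realizations $c_i$ of $p \rest N_i$ to be found inside $M$ and hence inserted into the subsequent models of the chain while keeping every model of size $\le \chi_0$.
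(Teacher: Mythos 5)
Your proof is correct and coincides with the standard argument for the cited result: the paper gives no proof of Fact \ref{ns-lc}, quoting it from \cite{shelahaecbook2} (V.A.1.12), whose proof is precisely your scheme --- assume splitting over every small submodel, build the chain while feeding each realization $c_i$ forward into later models, homogenize the formula by pigeonhole (using $\chi_0 \ge 2^{\LS (\K)}$, which indeed bounds the number of Morleyization predicates, and regularity of $\chi_0^+$), and read off the $\LS (\K)$-order property of length $\chi_0^+$, contradicting the choice of $\chi_0$ in Notation \ref{notation-1}. One cosmetic remark: at the ``upper triangle'' step you invoke Galois-type equality of $\ba_j, \bb_j$ over $M_j$ plus an automorphism of $\sea$ over $M_j$, whereas the definition of syntactic splitting already hands you \emph{syntactic} type equality over $M_j$, from which $\phi (c_i; \ba_j) \leftrightarrow \phi (c_i; \bb_j)$ follows immediately since $c_i \in M_j$ (and under the standing tameness hypothesis the two formulations agree anyway, so this is harmless).
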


%% \begin{defin}[Definition 5.9 in \cite{bv-sat-v3}]
%%   A sequence $\BI$ is \emph{$\mu$-based on $M_0$} if for any $M$ with $M_0 \lea M$, $\Av_{\mu} (\BI / M)$ does not syntactically split over $M_0$ (when the average exists).
%% \end{defin}

%% \begin{fact}[Claim IV.1.23(2) in \cite{shelahaecbook} or see Lemma 5.10 in \cite{bv-sat-v3}]\label{based-fact-2}
%%   Let $\BI$ be a sequence and let $\BJ \subseteq \BI$ have size at least $\mu$. Then $\BI$ is $\mu$-based on any model $M_0 \lea \sea$ containing $\BJ$.
%% \end{fact}
%% \begin{fact}[Lemma 5.20 in \cite{bv-sat-v3}]\label{based-fact}
%%   Let $\BI$ be a $(\mu^+, \mu^+, \mu^+)$-Morley sequence over $M$ (for some type). If $\BI$ is $\mu$-convergent, then $\BI$ is $\mu$-based on $M$.
%% \end{fact}

\begin{fact}\label{average-splitting}
  Let $M_0 \lea M$ be both $(\chi_1')^+$-saturated. Let $\mu := \|M_0\|$. Let $p \in \gS (M)$ and let $\BI$ be a $(\mu^+, \mu^+, \gamma (\mu))$-Morley sequence for $p$ over $M_0$ with all the witnesses inside $M$. Then if $p$ does not syntactically split or does not fork over $M_0$, then $\Av_{\gamma (\mu)} (\BI / M) = p$.
\end{fact}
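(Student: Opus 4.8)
The plan is to show that the two types $p$ and $q := \Av_{\gamma(\mu)} (\BI / M)$ coincide by checking that they agree on a large enough submodel of $M$ and then invoking a uniqueness property of non-forking (resp.\ non-syntactic-splitting) extensions. First I would record that $q$ is a well-defined member of $\gS (M)$: since $\BI$ is a $(\mu^+, \mu^+, \gamma (\mu))$-Morley sequence, the analogue of \cite[5.21]{bv-sat-v3} shows that $\BI$ is $\gamma (\mu)$-convergent, so the average $\Av_{\gamma (\mu)} (\BI / M)$ is a complete type over $M$. Writing $\seq{M_i : i < \ell}$ for the witnessing increasing chain (all inside $M$, with unions universal over $M_0$) and $\seq{\bar a_i : i < \ell}$ for the realizations, the defining property of a Morley sequence for $p$ over $M_0$ gives that $\gtp (\bar a_i / M_i)$ is the non-forking (resp.\ non-syntactic-splitting) extension of $p \rest M_0$ to $M_i$. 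Under either hypothesis on $p$, the type $p \rest M_i$ is itself such an extension, so by uniqueness of these extensions (Fact \ref{forking-props}(\ref{forking-props-4}) in the forking case, using that $M_i$ is universal over $M_0$ in the splitting case) we get $\gtp (\bar a_i / M_i) = p \rest M_i$ for all $i$.

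Next, set $N := \bigcup_{i < \ell} M_i$, which is universal over $M_0$. I claim $q \rest N = p \rest N$. Fix a formula $\phi (x; \bar a)$ with $\bar a \in N$ of length $\le \LS (\K)$; since the chain is increasing continuous and $\ell$ is long enough, $\bar a$ lies in $M_{i_0}$ for some $i_0 < \gamma (\mu)$. For every $i \ge i_0$ we have $\bar a_i \models \phi (x; \bar a)$ if and only if $\phi (x; \bar a) \in p$, because $\gtp (\bar a_i / M_i) = p \rest M_i$ and $\bar a \in M_{i_0} \lea M_i$. Hence the set of indices disagreeing with $p$ has cardinality $\le |i_0| < \gamma (\mu)$, which is exactly the condition for $\phi (x; \bar a)$ to lie in $q = \Av_{\gamma (\mu)} (\BI / M)$ iff it lies in $p$. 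This proves $q \rest N = p \rest N$; in particular $q \rest M_0 = p \rest M_0$.

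Finally I would propagate the agreement from $N$ to all of $M$. For this it suffices to know that both $p$ and $q$ do not fork (resp.\ syntactically split) over $M_0$. In the forking case, $p$ does not fork over $M_0$ by hypothesis and $q$ does not fork over $M_0$ because it is the average of a Morley sequence based on $M_0$; applying the uniqueness of non-forking extensions (Fact \ref{forking-props}(\ref{forking-props-4})) to the two extensions of $p \rest M_0 = q \rest M_0$ yields $p = q$. In the syntactic-splitting case one argues identically, using that $N$ is universal over $M_0$ together with the uniqueness property for non-syntactic-splitting extensions from \cite{bv-sat-v3}.

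The step I expect to be most delicate is the cardinal bookkeeping in the second paragraph together with the based-ness of the average. One must verify that the length $\ell$ and the convergence cardinal $\gamma (\mu)$ are arranged so that every $\bar a \in N$ of length $\le \LS (\K)$ enters the chain before stage $\gamma (\mu)$, so that the ``bad'' index set genuinely has cardinality $< \gamma (\mu)$; and one must extract from the precise definition of a Morley sequence that $q$ does not split (resp.\ fork) over $M_0$. This last point, and the identification $\gtp (\bar a_i / M_i) = p \rest M_i$ (rather than merely a parallel non-splitting extension), is where the two separate hypotheses on $p$ are genuinely used.
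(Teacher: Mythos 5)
The paper itself gives no argument here: it simply cites \cite[5.25]{bv-sat-v3} for the syntactic-splitting case and remarks that the proof of \cite[6.9]{bv-sat-v3} adapts to forking. So you are reconstructing an external proof, and your syntactic-splitting case does reconstruct it correctly: both $p$ and $q := \Av_{\gamma(\mu)}(\BI/M)$ fail to syntactically split over $M_0$ ($q$ by based-ness, i.e.\ \cite[5.20]{bv-sat-v3}), they agree on a universal extension of $M_0$, and uniqueness of non-splitting extensions finishes. The forking case, however, has a genuine gap at the sentence ``$q$ does not fork over $M_0$ because it is the average of a Morley sequence based on $M_0$.'' Based-ness yields only that $q$ does not \emph{syntactically split} over $M_0$. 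In this paper ``$q$ does not fork over $M_0$'' means, by definition, that there exists $M_0' \lea M_0$ with $M_0' \in \K_{\chi_0'}$ such that $q$ does not $\chi_0'$-split over $M_0'$ --- a statement about Galois splitting over a \emph{small} submodel of $M_0$, whereas here $\|M_0\| = \mu \ge (\chi_1')^+ \gg \chi_0'$. Neither based-ness nor convergence produces such a small base inside $M_0$, and set local character (Fact \ref{forking-props}(\ref{forking-props-2})) only produces one somewhere inside $M$, not inside $M_0$. So Fact \ref{forking-props}(\ref{forking-props-4}) cannot be invoked as you do; indeed, establishing that $q$ does not fork over $M_0$ is essentially as hard as the conclusion $p = q$ itself.

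The repair is to route the forking case through splitting rather than through nonforking-uniqueness: the \emph{moreover} clause of Fact \ref{forking-props}(\ref{forking-props-4}) says that the hypothesis ``$p$ does not fork over $M_0$'' already implies that $p$ does not $\lambda$-split over $M_0$ for every $\lambda \ge (\chi_0')^+$; since $q$ does not syntactically split over $M_0$, it also does not $\lambda$-split over $M_0$ (a Galois splitting witness yields a syntactic one); and two types over $M$ which do not $\lambda$-split over $M_0$ and agree on a universal extension of $M_0$ inside $M$ are equal, by the standard uniqueness argument together with tameness. This is presumably what the paper means by ``the proof of \cite[6.9]{bv-sat-v3} shows that this also works for forking.'' Two smaller corrections. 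First, the identification $\gtp(\bar a_i / M_i) = p \rest M_i$ is part of the definition of a Morley sequence for $p$ (it is the clause of \cite[5.14]{bv-sat-v3} checked as ``clause (4)'' in the proof of Lemma \ref{morley-union}), so no uniqueness argument is needed there --- which is just as well, since the uniqueness you appeal to at that point would itself require agreement over a universal extension of $M_0$, not merely over $M_0$. Second, the bound on the ``disagreeing'' index set should not be justified by saying that $\bar a$ enters the chain before stage $\gamma(\mu)$: the index set may be an arbitrary linear order, possibly of size larger than $\gamma(\mu)$. The correct reason is structural: all earlier elements of $\BI$ lie inside each later witness, and the witnesses have size at most $\mu^+$, so every proper initial segment of the index order --- whatever its order type --- has size at most $\mu^+ < \gamma(\mu)$.
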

\begin{proof}
  For syntactic splitting, this is \cite[5.25]{bv-sat-v3}. The Lemma is actually more general and the proof of \cite[6.9]{bv-sat-v3} shows that this also works for forking. 
\end{proof}

\begin{proof}[Proof of Lemma \ref{forking-charact}] 
  Before starting, note that if $\mu < \chi_2$, then $\K$ is stable in $2^{{\mu + \chi_0}} < \chi_2$ by Fact \ref{stab-spectrum-2}. Thus there are unboundedly many stability cardinals below $\chi_2$, so we have ``enough space'' to build Morley sequences.

  \begin{itemize}
    \item \underline{(\ref{forking-char-1}) implies (\ref{forking-char-2})}:
      By Fact \ref{ns-lc}, we can find $M_0' \lea M_0$ such that $p \rest M_0$ does not syntactically split over $M_0'$ and $\|M_0'\| \le \chi_1$. Taking $M_0'$ bigger, we can assume $M_0'$ is $\chi_1$-saturated and $p \rest M_0$ does not fork over $M_0'$. Thus by transitivity $p$ does not fork over $M_0'$. Let $\BI$ be a $(\chi_1^+, (\chi_1')^+, (\chi_1')^+)$-Morley sequence for $p \rest M_0$ over $M_0'$ inside $M_0$. By \cite[5.21]{bv-sat-v3}, $\BI$ is $\chi_1'$-convergent. By \cite[5.20]{bv-sat-v3}, $\BI$ is $\chi_1'$-based on $M_0'$. Note also that $\BI$ is a $(\chi_1^+, (\chi_1')^+, (\chi_1')^+)$-Morley sequence for $p$ over $M_0'$ and by Fact \ref{average-splitting}, $\Av_{\chi_1'} (\BI / M_0) = p$ so as $\BI$ is $\chi_1'$-based on $M_0'$, $p$ does not syntactically split over $M_0'$.
    \item \underline{(\ref{forking-char-2}) implies (\ref{forking-char-3})}:
      As in the proof of (\ref{forking-char-1}) implies (\ref{forking-char-2}) (except $\chi_1$ could be bigger).
    \item \underline{(\ref{forking-char-3}) implies (\ref{forking-char-2})}:
      By Fact \cite[5.21]{bv-sat-v3}, $\BI$ is $\gamma (\mu)$-convergent. Pick any $\BJ \subseteq \BI$ of length $\gamma (\mu)$ and use \cite[5.10]{bv-sat-v3} to find $M_0' \lea M_0$ of size $\gamma (\mu)$ such that $\BJ$ is $\gamma (\mu)$-based on $M_0'$. Since also $\BJ$ is $\gamma (\mu)$-convergent, we have that $\BI$ is $\gamma (\mu)$-based on $M_0'$. Thus $\Av_{\gamma (\mu)} (\BI / M) = p$ does not syntactically split over $M_0'$.
    \item \underline{(\ref{forking-char-2}) implies (\ref{forking-char-1})}:
      Without loss of generality, we can choose $M_0'$ to be such that $p \rest M_0$ also does not fork over $M_0'$. Let $\mu := \|M_0'\| + \chi_0$. Build a $(\mu^+, \mu^+, \gamma (\mu))$-Morley sequence $\BI$ for $p$ over $M_0'$ inside $M_0$. If $q$ is the nonforking extension of $p \rest M_0$ to $M$, then $\BI$ is also a Morley sequence for $q$ over $M_0'$ so by the proof of (\ref{forking-char-1}) implies (\ref{forking-char-2}) we must have $\Av_{\gamma (\mu)} (\BI / M) = q$, but also $\Av_{\gamma (\mu)} (\BI / M) = p$, since $p$ does not syntactically split over $M_0'$ (Fact \ref{average-splitting}). Thus $p = q$.
  \end{itemize}
\end{proof}

The next result is a version of \cite[III.3.10]{shelahfobook} in our context. It is implicit in the proof of \cite[5.27]{bv-sat-v3}.

\begin{lem}\label{saturation-charact}
  Let $M \in \Ksatp{\chi_2}$. Let $\lambda \ge \chi_2$ be such that $\K$ is stable in unboundedly many $\mu < \lambda$. The following are equivalent.

  \begin{enumerate}
    \item\label{sat-cond-1} $M$ is $\lambda$-saturated.
    \item\label{sat-cond-2} If $q \in \gS (M)$ is not algebraic and does not syntactically split over $M_0 \lea M$ with $\|M_0\| < \chi_2$, there exists a $((\|M_0\| + \chi_0)^+, (\|M_0\| + \chi_0)^+, \lambda)$-Morley sequence for $p$ over $M_0$ inside $M$.
  \end{enumerate}
\end{lem}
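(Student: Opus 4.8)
The plan is to prove the two implications separately, disposing of algebraic types at the outset: an algebraic type is trivially realized in its own (small) domain, so both conditions hold vacuously for such types and I may focus on nonalgebraic ones. I expect the heart of the argument to be the interplay between $\lambda$-saturation and the fact, drawn from the theory of averages in \cite{bv-sat-v3}, that a convergent Morley sequence of length $\lambda$ lying inside a model realizes its own average over any $\lea$-submodel of size $<\lambda$. Throughout I will use freely that, by Fact \ref{stab-spectrum-2}, there are unboundedly many stability cardinals below $\chi_2$, so there is ``enough space'' to build the Morley sequences.

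For (\ref{sat-cond-2}) implies (\ref{sat-cond-1}), I would fix $N \lea M$ with $\|N\| < \lambda$ and a nonalgebraic $p \in \gS (N)$ to be realized. First extend $p$ to some $q \in \gS (M)$ by amalgamation; if $q$ is algebraic then $p$ is already realized inside $M$ and we are done, so assume $q$ is nonalgebraic. Since $M \in \Ksatp{\chi_2}$ is in particular $\chi_0^+$-saturated, Fact \ref{ns-lc} produces $M_0 \lea M$ with $\|M_0\| \le \chi_0 < \chi_2$ over which $q$ does not syntactically split. Now hypothesis (\ref{sat-cond-2}) yields a $(\chi_0^+, \chi_0^+, \lambda)$-Morley sequence $\BI$ for $q$ over $M_0$ lying inside $M$. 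By \cite[5.21]{bv-sat-v3} $\BI$ is convergent, and by the computation of averages (Fact \ref{average-splitting}) its average over $M$ equals $q$; restricting, $\Av_{\gamma (\chi_0)} (\BI / N) = q \rest N = p$. Since $\BI \subseteq M$ has length $\lambda$ while $\|N\| < \lambda$, convergence guarantees that all but fewer than $\lambda$ elements of $\BI$ realize the full average over $N$, so some element of $\BI \subseteq M$ realizes $p$. As $N$ and $p$ were arbitrary, $M$ is $\lambda$-saturated.

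For the converse, assume $M$ is $\lambda$-saturated and let $q \in \gS (M)$ be nonalgebraic and not syntactically splitting over $M_0 \lea M$, setting $\mu_0 := (\|M_0\| + \chi_0)^+ < \chi_2 \le \lambda$. I would build the required sequence directly inside $M$ by recursion on $i < \lambda$: a $\lea$-increasing continuous chain $\seq{N_i : i < \lambda}$ of $\mu_0$-saturated models of size $\mu_0$ with $M_0 \lea N_0 \lea M$, together with $a_i \in M$ realizing $q \rest N_i$. At stage $i$ the model $N_i$ has size $\mu_0 + |i| < \lambda$, so $q \rest N_i$ is realized in $M$ by $\lambda$-saturation; choosing such a realization $a_i$ and then $N_{i+1} \lea M$ of size $\mu_0$, $\mu_0$-saturated, containing $N_i$ and $a_i$, completes the step. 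Because every $a_i$ realizes the restriction to $N_i$ of the single type $q$, which does not split over $M_0$, the resulting $\seq{a_i : i < \lambda}$ is indiscernible over $M_0$ with the right nonsplitting data, and matching this against the definition in \cite[Section 5]{bv-sat-v3} exhibits it as a $(\mu_0, \mu_0, \lambda)$-Morley sequence for $q$ over $M_0$ inside $M$.

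The main obstacle I anticipate lies in the direction (\ref{sat-cond-1}) implies (\ref{sat-cond-2}): confirming that the sequence produced by this naive recursion genuinely meets the technical definition of a Morley sequence rather than being merely an indiscernible sequence of realizations. Two points need care. First, the witness models $N_i$ must be found $\mu_0$-saturated of size $\mu_0$ while remaining inside $M$; this is exactly where stability in unboundedly many $\mu < \lambda$ (together with Fact \ref{stab-spectrum-2}) is used, guaranteeing that such saturated submodels exist and can be built inside the $\lambda$-saturated $M$. Second, one must check that realizing $q \rest N_i$ at each step delivers the based/convergent structure demanded by the $(\mu_0, \mu_0, \lambda)$-Morley notion, which should follow from the uniqueness of nonsplitting extensions over saturated models together with the based- and convergence-results of \cite[5.20, 5.21]{bv-sat-v3}. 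In the reverse direction the only subtlety is the quantitative bookkeeping ensuring that the set of elements of $\BI$ failing to realize the full average over $N$ has size $< \lambda$, which reduces to the precise form of convergence established in \cite{bv-sat-v3}.
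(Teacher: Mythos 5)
Your overall skeleton matches the paper's: (\ref{sat-cond-1}) $\Rightarrow$ (\ref{sat-cond-2}) by a direct construction inside the saturated model (the paper dismisses this direction as trivial), and for (\ref{sat-cond-2}) $\Rightarrow$ (\ref{sat-cond-1}) you extend $p$ to $q \in \gS (M)$, find a syntactic nonsplitting base, invoke (\ref{sat-cond-2}), compute the average of the resulting Morley sequence, and realize $p$ by an element of it. However, two steps of your (\ref{sat-cond-2}) $\Rightarrow$ (\ref{sat-cond-1}) argument are not justified as written. The first concerns the base: you apply Fact \ref{average-splitting} to the $M_0$ produced by Fact \ref{ns-lc}, which has size at most $\chi_0$. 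But Fact \ref{average-splitting} requires $M_0$ to be $(\chi_1')^+$-saturated, so in particular $\|M_0\| \ge (\chi_1')^+$; your $M_0$ is far too small and need not be saturated at all. This is exactly why the paper does not take the Fact \ref{ns-lc} shortcut: it first obtains $M_0$ of size $(\chi_1')^+$ over which $q$ does not \emph{fork} (set local character, Fact \ref{forking-props}(\ref{forking-props-2})) and then uses Lemma \ref{forking-charact} to enlarge it so that $q$ also does not syntactically split over it, which puts it within reach of Fact \ref{average-splitting}. Your route is repairable --- enlarge the small base to a $(\chi_1')^+$-saturated $M_0' \lea M$ of size $< \chi_2$ (using $\chi_2$-saturation of $M$ and stability in unboundedly many cardinals below $\chi_2$) and invoke monotonicity of syntactic nonsplitting --- but some such step must be supplied; as written the fact simply does not apply.

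The second gap is more serious: the final realization step rests on a false claim. Convergence alone does \emph{not} guarantee that fewer than $\lambda$ elements of $\BI$ fail to realize $\Av_{\gamma(\chi_0)} (\BI / N)$. Convergence bounds the failures \emph{per formula}, but in the Galois Morleyization a formula over $N$ carries a parameter sequence of length $< \kappa = \LS (\K)^+$, so the number of formulas over $N$ is roughly $\|N\|^{\LS(\K)}$, which can be $\ge \lambda$ even though $\|N\| < \lambda$ (for instance when $\|N\|$ has cofinality $\le \LS (\K)$ and $\lambda = \|N\|^+$); the union of the per-formula failure sets can then have size $\ge \lambda$, and no element of $\BI$ need realize the average. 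The paper closes this by citing \cite[5.6]{bv-sat-v3} \emph{together with} the hypothesis that $\K$ is stable in unboundedly many $\mu < \lambda$: stability bounds the number of distinct types over $N$ realized by elements of $\BI$, and convergence then bounds each class of elements realizing a wrong type, which is what makes the count come out below $\lambda$. That stability hypothesis appears in the statement of the lemma precisely for this step, and your proof never uses it in this direction --- a clear sign that the argument, as given, is incomplete.
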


%% We use one more fact in the proof, telling us when the average is realized by an element of the sequence.

%% \begin{fact}[Lemma 5.6 in \cite{bv-sat-v3}]\label{realize-counting}
%%   Let $\BI$ be a sequence and let $\mu$ be a cardinal such that $\BI \ge \mu$. Let $M \lea \sea$ and let $p \in \gS (M)$ be such that $\Av_\mu (\BI / M) = p$. If $|\BI| > \mu + |\gS (M)|$, then there exists $b \in \BI$ realizing $p$.
%% \end{fact}

\begin{proof}
  (\ref{sat-cond-1}) implies (\ref{sat-cond-2}) is trivial using saturation. Now assume (\ref{sat-cond-2}). Let $p \in \gS (N)$, $\|N\| < \lambda$, $N \lea M$. We show that $p$ is realized in $M$. Let $q \in \gS (M)$ extend $p$. If $q$ is algebraic, we are done so assume it is not. Let $M_0 \lea M$ have size $(\chi_1')^+$ such that $q$ does not fork over $M_0$. By Lemma \ref{forking-charact}, we can increase $M_0$ if necessary so that $q$ does not syntactically split over $M_0$ and $\mu := \|M_0\| \ge \chi_0$. Now by (\ref{sat-cond-2}), there exists a $(\mu^+, \mu^+, \lambda)$-Morley sequence $\BI$ for $q$ over $M_0$ inside $M$. Now by Fact \ref{average-splitting}, $\Av_{\gamma (\mu)} (\BI / M) = q$. Thus $\Av_{\gamma (\mu)} (\BI / N) = p$. By \cite[5.6]{bv-sat-v3} and the hypothesis of stability in unboundedly many cardinals below $\lambda$, $p$ is realized by an element of $\BI$ and hence by an element of $M$.
\end{proof}

We end by showing that if high-enough limit models are sufficiently saturated, then no long splitting chains holds. A similar argument already appears in the proof of \cite[IV.4.10]{shelahaecbook}. We start with a more local version, 

\begin{lem}\label{forking-chain-lc}
  Let $\lambda \ge \chi_2$. Let $\delta < \lambda^+$ be a limit ordinal and let $\seq{M_i : i < \delta}$ be an increasing chain of saturated models in $\K_\lambda$. Let $M_\delta := \bigcup_{i < \delta} M_i$. If $M_\delta$ is $\chi_2$-saturated, then for any $p \in \gS (M_\delta)$, there exists $i < \delta$ such that $p$ does not fork over $M_i$.
\end{lem}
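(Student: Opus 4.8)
The plan is to reduce, via the characterization of forking in terms of averages (Lemma \ref{forking-charact}), to finding a single $M_i$ together with a small convergent Morley sequence for $p$ whose witnesses lie inside $M_i$ and whose average is $p$. We may assume $p$ is not algebraic. Since $M_\delta$ is $\chi_2$-saturated, hence $\chi_0^+$-saturated, Fact \ref{ns-lc} gives $N^\ast \lea M_\delta$ with $\|N^\ast\| \le \chi_0$ over which $p$ does not syntactically split. Note that each $M_i$ is $\chi_2$-saturated (being saturated of size $\lambda \ge \chi_2$), so Lemma \ref{forking-charact} applies to every pair $(M_i, M_\delta)$, and we are free to use that there are unboundedly many stability cardinals below $\chi_2$ (Fact \ref{stab-spectrum-2}) to build Morley sequences inside $M_\delta$ or inside any fixed $M_i$.

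First I would dispose of the case $\cf{\delta} > \chi_0$. Then the at most $\chi_0$ elements of $N^\ast$ are bounded in the chain, so $|N^\ast| \subseteq |M_i|$ for some $i < \delta$, and the coherence axiom gives $N^\ast \lea M_i$. Enlarging $N^\ast$ inside $M_i$ to a sufficiently saturated model of size $< \chi_2$ (using that $M_i$ is saturated) and building a based, convergent Morley sequence for $p$ over it inside $M_i$, Fact \ref{average-splitting} realizes a nonforking extension of $p \rest M_i$ as the relevant average; Lemma \ref{forking-charact} then yields that $p$ does not fork over $M_i$.

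The main obstacle is the case $\cf{\delta} \le \chi_0$, where $N^\ast$ need not sit below any single $M_i$. Here I would pass to a cofinal subchain of length $\theta := \cf{\delta}$ and fix a regular $\mu$ with $\chi_0 < \mu < \chi_2$ (such $\mu$ exists since $\chi_2 = \beth_\omega (\chi_0)$). Using the $\chi_2$-saturation of $M_\delta$, build a $(\mu, \mu, \gamma (\mu)^+)$-Morley sequence $\BI$ for $p$ over a base extending $N^\ast$, with all witnesses inside $M_\delta$; by \cite[5.21]{bv-sat-v3} and Fact \ref{average-splitting} it is $\gamma (\mu)$-convergent with $\Av_{\gamma (\mu)} (\BI / M_\delta) = p$. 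Since $\BI$ has regular length $\mu > \theta$ and its $\mu$ elements are distributed among the $\theta$ models of the cofinal subchain, the pigeonhole principle produces a single $M_i$ containing a subsequence $\BJ \subseteq \BI$ of full length $\mu$. As a subsequence of a convergent Morley sequence of the same length is again convergent with the same average, $\BJ$ is convergent, lies in $M_i$, and satisfies $\Av_{\gamma (\mu)} (\BJ / M_\delta) = p$. Extracting from $M_i$ (via \cite[5.10]{bv-sat-v3}) a model $M_0' \lea M_i$ of size $< \chi_2$ on which $\BJ$ is based, the average of $\BJ$ does not syntactically split over $M_0' \lea M_i$, and this same average witnesses the nonforking-extension clause of condition (\ref{forking-char-3}) in Lemma \ref{forking-charact}. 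Hence $p$ does not fork over $M_i$, as required.

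I would expect the two delicate points to be (i) the localization in the small-cofinality case, which is precisely what the pigeonhole on a long Morley sequence is designed to overcome, and (ii) verifying the ``nonforking extension'' clause of Lemma \ref{forking-charact} in the possibly strictly stable setting, where forking need not have the global extension property; the latter is handled by observing that the average of the based convergent (sub)sequence living inside $M_i$ is itself a nonforking extension of $p \rest M_i$.
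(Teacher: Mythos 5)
Your overall architecture (reduce to Lemma \ref{forking-charact}, localize a small base when the cofinality is large, pigeonhole on a long convergent Morley sequence when it is small) is close to the paper's proof, which splits instead into the cases $\delta \ge \chi_2$ and $\delta < \chi_2$ (handling the first case even more simply, by set local character, Fact \ref{forking-props}(\ref{forking-props-2})). But there is a genuine gap at exactly the point you flag as delicate point (ii), and your proposed resolution is circular. In both of your cases, the average of the based convergent (sub)sequence over $M_\delta$ \emph{is} $p$ itself --- that is precisely how you use it to verify the Morley-sequence clause of condition (\ref{forking-char-3}). So the assertion that this average ``is itself a nonforking extension of $p \rest M_i$'' is literally the assertion that $p$ does not fork over $M_i$, which is the conclusion of the lemma. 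Nothing available gives you this: being $\gamma(\mu)$-based on some $M_0' \lea M_i$ only yields \emph{syntactic} non-splitting over $M_0'$, and since $\|M_0'\|$ (of size $\gamma(\mu)$ in your pigeonhole case) exceeds $\chi_0'$, this cannot be converted into non-$\chi_0'$-splitting over a model in $\K_{\chi_0'}$, which is what the definition of forking demands; non-splitting does not transfer down to smaller bases. Indeed, the whole reason Lemma \ref{forking-charact} carries the nonforking-extension clause (see the remark following it) is that in the strictly stable setting --- the setting of Theorem \ref{ss-from-chainsat}, where this lemma is applied --- extension can fail, so that clause must be supplied as an independent input, not extracted from the sequence.

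The paper closes this gap noncircularly using Fact \ref{forking-props}(\ref{forking-props-5}) (local extension over saturated models): since $M_i$ is saturated of cardinality $\lambda$ and one can find a saturated $M' \gea M_\delta$ of cardinality $\lambda$, the type $p \rest M_i$ has a nonforking extension to $\gS(M')$, hence to $\gS(M_\delta)$ by monotonicity; only then does Lemma \ref{forking-charact} apply. This is where the hypothesis that the $M_i$ are \emph{saturated} does real work, and your proof needs this step (or a substitute for it) in both cases. A secondary, fixable problem is the parameter bookkeeping in your small-cofinality case: as written, $\BI$ and the pigeonholed $\BJ$ have length $\mu < \gamma(\mu)$, so $\Av_{\gamma(\mu)}(\BJ/M_\delta)$ is not even well defined and $\gamma(\mu)$-convergence says nothing about $\BJ$; you need the sequence to have regular length which is both greater than $\cf{\delta}$ and at least $\gamma(\mu)$ (the paper takes length at least $\chi_2$, using $\delta < \chi_2$), so that the pigeonholed subsequence is long enough for its $\gamma(\mu)$-average to exist and agree with that of $\BI$.
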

\begin{proof}
  Without loss of generality, $\delta$ is regular. If $\delta \ge \chi_2$, by set local character (Fact \ref{forking-props}(\ref{forking-props-2})), there exists $M_0'$ of size $\chi_1$ such that $p$ does not fork over $M_0'$ and $M_0' \lea M_\delta$, so pick $i < \delta$ such that $M_0' \lea M_i$ and use monotonicity. 

Now assume $\delta < \chi_2$. By assumption, we have that $M_\delta$ is $\chi_2$-saturated. We also have that $p$ does not fork over $M_\delta$ (by set local character) so by Lemma \ref{forking-charact}, there exists $\mu \in [\chi_0^+, \chi_2)$ and $\BI$ a $(\mu, \mu, \gamma (\mu)^+)$-Morley sequence for $p$ with all the witnesses inside $M_\delta$ such that $\Av_{\gamma (\mu)} (\BI / M_\delta) = p$. Since $M_\delta$ is $\chi_2$-saturated (and there are unboundedly many stability cardinals below $\chi_2$), we can increase $\BI$ if necessary to assume that $|\BI| \ge \chi_2$. Write $\BI_i := |M_i| \cap \BI$. Since $\delta < \chi_2$, there must exists $i < \delta$ such that $|\BI_i| \ge \chi_2$. Note that $\BI_i$ is a $(\mu, \mu, \chi_2)$-Morley sequence for $p$. Because $\BI$ is $\gamma (\mu)$-convergent and $|\BI_i| \ge \chi_2 > \gamma (\mu)$, $\Av_{\gamma (\mu)} (\BI_i / M_\delta) = p$. Letting $M' \gea M_\delta$ be a saturated model of size $\lambda$ and using local extension over saturated models (Fact \ref{forking-props}(\ref{forking-props-5})), $p \rest M_i$ has a nonforking extension to $\gS (M')$ and hence to $\gS (M_\delta)$. By Lemma \ref{forking-charact}, $p$ does not fork over $M_i$, as desired.
\end{proof}

\begin{thm}\label{ss-from-chainsat}
  Assume that $\K$ has a monster model, is $\LS (\K)$-tame, and stable in some cardinal greater than or equal to $\LS (\K)$.

  Let $\chi_0 \ge \LS (\K)$ be such that $\K$ does not have the $\LS (\K)$-order property of length $\chi_0$, and let $\chi_2 := \beth_\omega (\chi_0)$. Let $\lambda \ge \chi_2$ be such that $\K$ is stable in $\lambda$ and there exists a saturated model of cardinality $\lambda$. If every limit model of cardinality $\lambda$ is $\chi_2$-saturated, then $\K$ has no long splitting chains in $\lambda$.
\end{thm}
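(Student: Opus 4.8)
The plan is to deduce the theorem from its local version, Lemma~\ref{forking-chain-lc}, by manufacturing a chain of genuinely saturated models out of the given universal chain and then translating nonforking back into nonsplitting. Fix a limit $\delta < \lambda^+$ and an increasing $\seq{M_i : i < \delta}$ in $\K_\lambda$ with $M_{i+1}$ universal over $M_i$ for all $i$, put $M_\delta := \bigcup_{i < \delta} M_i$, and let $p \in \gS(M_\delta)$; by Definition~\ref{loc-card-def} I must produce some $i < \delta$ with $p$ not $\lambda$-splitting over $M_i$. The key preliminary observation is that $M_\delta$ is a $(\lambda,\delta)$-limit model, so by hypothesis it is $\chi_2$-saturated; since $\chi_2 = \beth_\omega(\chi_0) > (\chi_0')^+$, the notion of forking developed in this section applies to types over $M_\delta$.

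Next I would build, by induction on $i < \delta$, an increasing chain $\seq{N_i : i < \delta}$ of saturated models of size $\lambda$ with $M_i \lea N_i \lea M_{i+1}$. At each stage, using that a saturated model of size $\lambda$ exists, I first take a saturated $S_i \gea M_i$ of size $\lambda$ (any $M_i$ embeds into the saturated model of size $\lambda$, which therefore witnesses such an $S_i$); since $M_{i+1}$ is universal over $M_i$ there is $f \colon S_i \to M_{i+1}$ fixing $M_i$, and I set $N_i := f[S_i]$. Then $N_i$ is saturated with $M_i \lea N_i \lea M_{i+1}$, and for $j < i$ we have $N_j \lea M_{j+1} \lea M_i \lea N_i$, so the chain is indeed $\lea$-increasing. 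Since $M_i \lea N_i \lea M_{i+1}$ for each $i$, taking unions gives $\bigcup_{i<\delta} N_i = \bigcup_{i<\delta} M_i = M_\delta$, which is $\chi_2$-saturated as noted above.

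Finally I would apply Lemma~\ref{forking-chain-lc} to the chain $\seq{N_i : i < \delta}$ (an increasing chain of saturated models in $\K_\lambda$ whose union $M_\delta$ is $\chi_2$-saturated): this yields $i < \delta$ such that $p$ does not fork over $N_i$. Since $N_i$ and $M_\delta$ are both $(\chi_0')^+$-saturated and $\lambda \ge \chi_2 > (\chi_0')^+$, the moreover clause of Fact~\ref{forking-props}(\ref{forking-props-4}) gives that $p$ does not $\lambda$-split over $N_i$. As $N_i \lea M_{i+1} \lea M_\delta$ and nonsplitting over a submodel is inherited by larger models (any isomorphism fixing $M_{i+1}$ also fixes $N_i$, so a witness to $\lambda$-splitting over $M_{i+1}$ would be one over $N_i$), $p$ does not $\lambda$-split over $M_{i+1}$, which is the required index. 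The genuine content all sits in Lemma~\ref{forking-chain-lc} (and hence in the average machinery of Section~\ref{forking-sec}); the main things to get right here are the direction of the monotonicity step --- one must pass from nonsplitting over the \emph{smaller} saturated model $N_i$ to nonsplitting over the \emph{larger} chain model $M_{i+1}$ --- together with the verification that the interleaved chain $\seq{N_i : i < \delta}$ is increasing with union exactly $M_\delta$.
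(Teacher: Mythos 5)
Your proof is correct, and its engine is the same as the paper's: everything funnels into Lemma~\ref{forking-chain-lc}, followed by the moreover clause of Fact~\ref{forking-props}(\ref{forking-props-4}) to convert nonforking into non-$\lambda$-splitting. Where you genuinely diverge is in the reduction from an arbitrary universal chain to a chain of saturated models. The paper first passes to the abstract class $\Ksatpnobf{\chi_2}_\lambda$ ordered by being equal or universal over, invokes the skeleton transfer result Fact~\ref{skeleton-facts}(\ref{skel-2b}) (whose content is \cite[6.7]{indep-aec-apal}) to reduce no long splitting chains in $\K$ to the same property for that skeleton, and then declares ``without loss of generality $M_{i+1}$ is saturated,'' citing uniqueness of limit models of the same length. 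You instead stay inside $\K$: you interleave saturated models $N_i$ with $M_i \lea N_i \lea M_{i+1}$ directly (using existence and universality of the saturated model of cardinality $\lambda$, plus universality of $M_{i+1}$ over $M_i$), apply the lemma to $\seq{N_i : i < \delta}$, and transfer the conclusion back to the original chain via upward monotonicity of nonsplitting in the base --- precisely the step your last paragraph flags as the one needing care, and indeed it is the step the paper's terse WLOG quietly relies on when unpacked. Your route buys self-containedness (no skeleton machinery, no uniqueness of limit models) and makes the monotonicity direction explicit; the paper's packaging is shorter and reuses a framework (skeletons) that it needs anyway for the main corollary. One small point both treatments gloss over: to see that $M_\delta$ is $\chi_2$-saturated one must know that the union of a not-necessarily-continuous universal chain is a limit model of cardinality $\lambda$; this follows by inserting unions at limit stages and noting that, given amalgamation, universality over a model implies universality over any $\lea$-submodel.
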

\begin{proof}
  Let $\K'$ be $\Ksatpnobf{\chi_2}_\lambda$ ordered by being equal or universal over. Note that, by stability in $\lambda$, $\K'$ is a skeleton of $\K_\lambda$ (see Definition \ref{skel-def}). Moreover since every limit model of cardinality $\lambda$ is $\chi_2$-saturated, for any limit $\delta < \lambda^+$, one can build an increasing continuous chain $\seq{M_i : i \le \delta}$ in $\K_\lambda$ such that for all $i \le \delta$, $M_i$ is $\chi_2$-saturated and (when $i < \delta$) $M_{i + 1}$ is universal over $M_i$. Therefore limit models exist in $\K'$, so the assumptions of Fact \ref{skeleton-facts}(\ref{skel-2b}) are satisfied. So it is enough to see that $\K'$ (not $\K$) has no long splitting chains in $\lambda$.
  
  Let $\delta < \lambda^+$ be limit and let $\seq{M_i : i < \delta}$ be an increasing chain of models in $\K'$, with $M_{i + 1}$ universal over $M_i$ for all $i < \delta$. Let $M_\delta := \bigcup_{i < \delta} M_i$. By assumption, $M_\delta$ is $\chi_2$-saturated. By uniqueness of limit models of the same length, we can assume without loss of generality that $M_{i + 1}$ is saturated for all $i < \delta$.

  Let $p \in \gS (M_\delta)$. By Lemma \ref{forking-chain-lc} (applied to $\seq{M_{i + 1} : i < \delta}$), there exists $i < \delta$ such that $p$ does not fork over $M_i$. By the moreover part of Fact \ref{forking-props}(\ref{forking-props-4}), $p$ does not $\lambda$-split over $M_i$, as desired.
\end{proof}

\section{No long splitting chains implies solvability}\label{solvability-sec}

From now on we assume no long splitting chains:

\begin{hypothesis}\label{ns-hyp} \
  \begin{enumerate}
    \item Hypothesis \ref{global-nf-hyp}, and we fix cardinals $\chi_0 < \chi_0' < \chi_1 < \chi_1' < \chi_2$ as defined in Notation \ref{notation-1}, \ref{notation-2}, and \ref{notation-3}. Note that by Fact \ref{stab-spectrum-2} $\K$ is stable in $\chi_0'$.
    \item $\K$ has no long splitting chains in $\chi_0'$.
  \end{enumerate}

  In Notation \ref{notation-4}, we will define another cardinal $\chi$ with $\chi_2 < \chi$. If $\chi_0 < H_1$, we will also have that $\chi < H_1$.
\end{hypothesis}

Note that no long splitting chains in $\chi_0'$ and stability in $\chi_0'$ implies (Fact \ref{ss-stable}) that $\K$ is stable in all $\lambda \ge \chi_0'$. Further, forking is well-behaved in the sense of Fact \ref{union-sat}. This implies that Morley sequences are closed under unions (here we use that they are indexed by arbitrary linear orders, as opposed to just well-orderings). Recall  that we say $\BI \smallfrown \seq{N_i : i \le \delta}$ is a Morley sequence when $\BI$ is a sequence of elements and the $N_i$'s are an increasing chain of sufficiently saturated models witnessing that $\BI$ is Morley, see \cite[5.14]{bv-sat-v3} for the precise definition.

\begin{lem}\label{morley-union}
  Let $\seq{I_\alpha : \alpha < \delta}$ be an increasing (with respect to substructure) sequence of linear orders and let $I_\delta := \bigcup_{\alpha < \delta} I_\alpha$. Let $M_0, M$ be $\chi_2$-saturated such that $M_0 \lea M$. Let $\mu_0, \mu_1, \mu_2$ be such that $\chi_2 < \mu_0 \le \mu_1 \le \mu_2$, $p \in \gS (M)$ and for $\alpha < \delta$, let $\BI_\alpha := \seq{a_i : i \in I_\alpha}$ together with $\seq{N_i^\alpha : i \in I_\alpha}$ be $(\mu_0, \mu_1, \mu_2)$-Morley for $p$ over $M_0$, with $N_i^\alpha \lea N_i^{\beta} \lea M$ for all $\alpha \le \beta < \delta$ and $i \in I_\alpha$. For $i \in I_\alpha$, let $N_i^\delta := \bigcup_{\beta \in [\alpha, \delta)} N_i^{\beta}$. Let $\BI_\delta := \seq{a_i : i \in I_\delta}$.

  If $p$ does not fork over $M_0$, then $\BI_\delta \smallfrown \seq{N_i^\delta : i \in I_\delta}$ is $(\mu_0, \mu_1, \mu_2)$-Morley for $p$ over $M_0$.
\end{lem}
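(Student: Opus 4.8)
The plan is to verify, clause by clause, that the pair $\BI_\delta \smallfrown \seq{N_i^\delta : i \in I_\delta}$ satisfies the definition of a $(\mu_0, \mu_1, \mu_2)$-Morley sequence for $p$ over $M_0$ from \cite[5.14]{bv-sat-v3}. First I would record the ambient bookkeeping. Since the $I_\alpha$ increase, their union $I_\delta$ is again a linear order, which is exactly the point at which indexing by arbitrary linear orders rather than well-orders is essential; each $N_i^\delta$ is then well-defined, and since $\mu_0 > \chi_2 > (\chi_0')^+$, the machinery of Fact \ref{union-sat} and Fact \ref{forking-props} is available throughout.

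The structural clauses I would dispatch using smoothness of $\lea$ together with the fact that every finite subset of $I_\delta$ is contained in a single $I_\alpha$. If $i$ precedes $j$ in $I_\delta$, then both lie in some common $I_\beta$, and $N_i^{\beta'} \lea N_j^{\beta'} \lea N_j^\delta$ for all $\beta' \ge \beta$, so $N_i^\delta \lea N_j^\delta$ by smoothness; similarly $M_0 \lea N_i^\delta \lea M$. That each $N_i^\delta$ is $\mu_0$-saturated follows because it is an increasing union of the $\mu_0$-saturated models $N_i^\beta$ and, by Fact \ref{union-sat}(2), $\Ksatp{\mu_0}$ is an AEC. The size requirements and the requirement that $a_j \in N_i^\delta$ whenever $j$ precedes $i$ are inherited from the corresponding clauses for the individual $\BI_\beta$.

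The crux is the independence clause: I must show that $\gtp(a_i / N_i^\delta)$ does not fork over $M_0$ and equals $p \rest N_i^\delta$. This is a continuity-of-nonforking argument and is the only place the hypothesis that $p$ does not fork over $M_0$ is used. Fix $i \in I_\delta$, say $i \in I_\alpha$. Since $p$ does not fork over $M_0$, the extension property of Fact \ref{union-sat} yields some $q \in \gS (N_i^\delta)$ extending $p \rest M_0$ and not forking over $M_0$. For each $\beta \ge \alpha$, both $q \rest N_i^\beta$ and $\gtp(a_i / N_i^\beta)$ are nonforking extensions of $p \rest M_0$ (the latter because $\BI_\beta$ is Morley and $p$ does not fork over $M_0$, so $\gtp(a_i / N_i^\beta) = p \rest N_i^\beta$), so by uniqueness (Fact \ref{forking-props}(\ref{forking-props-4}), applicable since $M_0$ and $N_i^\beta$ are $(\chi_0')^+$-saturated) they agree. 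As $N_i^\delta = \bigcup_{\beta} N_i^\beta$ and a Galois type is determined by its restrictions to a cofinal chain, we conclude $q = \gtp(a_i / N_i^\delta) = p \rest N_i^\delta$, which therefore does not fork over $M_0$.

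Finally, the indiscernibility and based-ness clauses transfer by finite character: any finite subtuple of $\BI_\delta$ is indexed inside some $I_\alpha$ and is thus already controlled by the Morley condition for $\BI_\alpha$, while the parameters $\mu_1, \mu_2$ are inherited because $\BI_\alpha$ is a subsequence of $\BI_\delta$ for every $\alpha$. Collecting these clauses yields the conclusion. I expect the continuity-of-nonforking step to be the main obstacle, both because it is the only step genuinely requiring the no-long-splitting-chains hypothesis (through the extension and uniqueness properties recorded in Fact \ref{union-sat} and Fact \ref{forking-props}) and because one must ensure that the nonforking witness can be chosen uniformly along the chain $\seq{N_i^\beta : \beta}$ so that the uniqueness property genuinely applies at every level.
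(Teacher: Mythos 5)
Your overall architecture (verify the clauses of the Morley-sequence definition, isolate the clause $\gtp(a_i/N_i^\delta) = p \rest N_i^\delta$ as the crux) matches the paper's, and your handling of the structural clauses, as well as the construction via extension of a type $q \in \gS(N_i^\delta)$ not forking over $M_0$ with $q \rest N_i^\beta = \gtp(a_i/N_i^\beta)$ for all $\beta$, is sound. But the step you use to finish --- ``a Galois type is determined by its restrictions to a cofinal chain'' --- is a genuine gap, not a citable fact. This is precisely the \emph{locality} property of Galois types, which fails in general AECs, and tameness does not rescue it: $\LS(\K)$-tameness determines a type by its restrictions to submodels of size $\LS(\K)$, and when the chain $\seq{N_i^\beta : \beta \in [\alpha,\delta)}$ has small cofinality (say $\omega$), a submodel of $N_i^\delta$ of size $\LS(\K)$ witnessing $q \neq \gtp(a_i/N_i^\delta)$ need not be contained in any single $N_i^\beta$. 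So agreement of the two types on every $N_i^\beta$ does not, by itself, give $q = \gtp(a_i/N_i^\delta)$.

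In the present setting the conclusion does hold, but proving it requires exactly the ingredient your closing paragraph claims is not needed: the \emph{chain local character} of forking (Fact \ref{union-sat}, available because Hypothesis \ref{ns-hyp} is in force in this section). The paper's proof applies it directly to $\gtp(a_i/N_i^\delta)$ along the chain $\seq{N_i^\beta : \beta \in [\alpha,\delta)}$ of $\mu_0$-saturated models: there is $\beta$ such that $\gtp(a_i/N_i^\delta)$ does not fork over $N_i^\beta$; since $p \rest N_i^\delta$ also does not fork over $N_i^\beta$ (monotonicity, as $M_0 \lea N_i^\beta$) and the two types agree on $N_i^\beta$ (the Morley hypothesis for $\BI_\beta$), uniqueness (Fact \ref{forking-props}(\ref{forking-props-4})) gives $\gtp(a_i/N_i^\delta) = p \rest N_i^\delta$; note that the extension property is then not needed at all. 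Your argument becomes correct once the locality assertion is replaced by this one. A secondary point: the paper first notes, via Lemma \ref{forking-charact}, that $p$ does not syntactically split over $M_0$, which is what makes the non-splitting clause of the Morley definition pass to the union; your ``finite character'' transfer does not cover that clause either, since syntactic splitting is witnessed by tuples of length up to $\LS(\K)$, which again need not lie inside any single $N_i^\beta$.
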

\begin{proof}
  By Lemma \ref{forking-charact}, $p$ does not syntactically split over $M_0$. Therefore the only problematic clauses in \cite[Definition 5.14]{bv-sat-v3} are (4) and (7). Let's check (4): let $i \in I_\delta$. By hypothesis, $\ba_i$ realizes $p \rest N_i^\alpha$ for all sufficiently high $\alpha < \delta$. By local character of forking, there exists $\alpha < \delta$ such that $\gtp (\ba_i / N_i^\delta)$ does not fork over $N_i^\alpha$. Since $\gtp (\ba_i / N_i^\delta) \rest N_i^\alpha = p \rest N_i^\alpha$ and $p$ does not fork over $M_0 \lea N_i^\alpha$, we must have by uniqueness that $p \rest N_i^\delta = \gtp (\ba_i / N_i^\delta)$. The proof of (7) is similar.
\end{proof}

For convenience, we make $\chi_2$ even bigger:

\begin{notation}\label{notation-4}
  Let $\chi := \gamma (\chi_2)$ (recall from Notation \ref{notation-1} that $\gamma (\chi_2) = \left(2^{2^{\chi_2}}\right)^+$). A Morley sequence means a $(\chi_2^+, \chi_2^+, \chi)$-Morley sequence.
\end{notation}
\begin{remark}\label{rmk-hanf-3}
  By Remark \ref{rmk-hanf-2}, we still have $\chi < H_1$ if $\chi_0 < H_1$.
\end{remark}

We are finally in a position to prove solvability (in fact even uniform solvability). We will use condition (\ref{equiv-cond-3}) in Lemma \ref{solvability-equiv}. 

\begin{defin}
  We define a class of models $K'$ and a binary relation $\leap{\K'}$ on $K'$ (and write $\K' := (K', \leap{\K'})$) as follows.

  \begin{itemize}
    \item $K'$ is a class of $\tau' := \tau (\K')$-structures, with:

      $$
      \tau' := \tau (\K) \cup \{N_0, N, F, R\}
      $$

      where:

      \begin{itemize}
        \item $N_0$ and $R$ are binary relations symbols.
        \item $N$ is a ternary relation symbol.
        \item $F$ is a binary function symbol.
      \end{itemize}
    \item A $\tau'$-structure $M$ is in $K'$ if and only if:

      \begin{enumerate}
        \item $M \rest \tau (\K) \in \Ksatp{\chi}$.
        \item $R^M$ is a linear ordering of $|M|$. We write $I$ for this linear ordering.
        \item\label{cond-3-k'} For\footnote{For a binary relation $Q$ we write $Q (a)$ for $\{b \mid Q (a, b)\}$, similarly for a ternary relation.} all $a \in |M|$ and all $i \in I$, $N^M (a, i) \lea M \rest \tau (\K)$ (where we see $N^M (a, i)$ as an $\tau (\K)$-structure; in particular, $N^M (a, i) \in \K$; it will follow from (\ref{morley-seq-cond}) that the $N^M (a, i)$'s are increasing with $i$, $N_0^M (a) \lea N^M (a, i)$, and $N_0^M (a)$ is saturated of size $\chi_2$.
        \item\label{cond-4-k'} There exists a map $a \mapsto p_a$ from $|M|$ onto the non-algebraic Galois types (of length one) over $M \rest \tau (\K)$ such that for all $a \in |M|$:
          \begin{enumerate}
          \item $p_a$ does not fork\footnote{Note that by Lemma \ref{forking-charact} this also implies that it does not syntactically split over some $M_0' \lea N_0^M (a)$ with $\|M_0'\| < \chi_2$.} over $N_0^M (a)$.
            \item\label{morley-seq-cond} $\seq{F^M (a, i) : i \in I} \smallfrown \seq{N^M (a, i) : i \in I}$ is a Morley sequence for $p_a$ over $N_0^M (a)$. 
          \end{enumerate}
      \end{enumerate}
    \item $M \leap{K'} M'$ if and only if:
      \begin{enumerate}
        \item $M \subseteq M'$.
        \item $M \rest \tau (\K) \lea M' \rest \tau (\K)$.
        \item For all $a \in |M|$, $N_0^M (a) = N_0^{M'} (a)$.
      \end{enumerate}
    \end{itemize}
\end{defin}

We show in Lemma \ref{k'-aec} that  $\K'$ is an AEC, but first let us see that this suffices: 

\begin{lem}\label{k'-expansion}
  Let $\lambda \ge \chi$.
  \begin{enumerate}
    \item If $M \in \K_\lambda$ is saturated, then there exists an expansion $M'$ of $M$ to $\tau'$ such that $M' \in \K'$. 
    \item If $M' \in \K'$ has size $\lambda$, then $M' \rest \tau (\K)$ is saturated.
  \end{enumerate}
\end{lem}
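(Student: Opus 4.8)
Looking at Lemma \ref{k'-expansion}, I need to prove two directions relating saturated models of $\K$ of size $\lambda \ge \chi$ to members of $\K'$.

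Let me think about what $\K'$ encodes. A structure $M'\in\K'$ is a $\tau(\K)$-structure $M = M'\rest\tau(\K)$ that is $\chi$-saturated, together with a linear ordering $R^{M'}$ of the universe, and extra data: for each element $a$, a "base model" $N_0^{M'}(a)$ (saturated of size $\chi_2$), an increasing chain of models $N^{M'}(a,i)$ indexed by $i\in I$, a function $F^{M'}(a,i)$ picking elements, and crucially condition (\ref{cond-4-k'}): the map $a\mapsto p_a$ must be **onto** the non-algebraic Galois types over $M$, with each $p_a$ a nonforking type over $N_0^{M'}(a)$ witnessed by the Morley sequence $\seq{F^{M'}(a,i):i\in I}$.

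For part (1), I start with $M\in\K_\lambda$ saturated. The plan is to build the expansion. First fix a linear order $R$ on $|M|$ of type $\lambda$. The key is producing the map $a\mapsto p_a$ onto the non-algebraic types over $M$; since $M$ is saturated of size $\lambda$, there are exactly $\lambda$ many such types, so I can enumerate them and surject $|M|$ onto them. For each $a$, I set $p_a$ to be the assigned type, use set local character (Fact \ref{forking-props}(\ref{forking-props-2})) to find $N_0(a)\lea M$ saturated of size $\chi_2$ such that $p_a$ does not fork over $N_0(a)$, and then build a Morley sequence (a $(\chi_2^+,\chi_2^+,\chi)$-Morley sequence) for $p_a$ over $N_0(a)$ with all elements inside $M$ — this is possible since $M$ is saturated of size $\lambda\ge\chi$. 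I then interpret $N_0^{M'}(a)=N_0(a)$, let $N^{M'}(a,i)$ be the witnessing chain, and $F^{M'}(a,i)$ the sequence elements. I must check all four clauses of membership in $K'$ hold, which is routine given the construction.

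For part (2), I start with $M'\in\K'$ of size $\lambda$ and must show $M := M'\rest\tau(\K)$ is saturated. Here I would invoke Lemma \ref{saturation-charact}: since $M$ is $\chi$-saturated hence $\chi_2$-saturated, and $\K$ is stable in unboundedly many cardinals below $\lambda$ (as there are unboundedly many stability cardinals, by Fact \ref{stab-spectrum-2}), it suffices to verify condition (\ref{sat-cond-2}) of that lemma. So let $q\in\gS(M)$ be non-algebraic, not syntactically splitting over some small $M_0\lea M$; I must produce a Morley sequence for $q$ over $M_0$ inside $M$. The point is that by condition (\ref{cond-4-k'}), the map $a\mapsto p_a$ is onto the non-algebraic types, so there is $a$ with $p_a = q$, and clause (\ref{morley-seq-cond}) hands me a Morley sequence for $q$ over $N_0^{M'}(a)$ living inside $M$. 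The main obstacle is a bookkeeping mismatch between the base models and Morley-sequence parameters: Lemma \ref{saturation-charact} asks for a sequence over the given $M_0$, whereas $\K'$ supplies one over $N_0^{M'}(a)$, and I need the length ($\lambda$) to be correct. I would resolve this by extracting a subsequence of the right length and using that the two bases yield the same type $q$ together with the convergence and basedness machinery (as in the proof of Lemma \ref{forking-charact}, via \cite[5.21, 5.20]{bv-sat-v3}) to re-base the Morley sequence, ensuring it witnesses condition (\ref{sat-cond-2}) for $M_0$. Matching all the Morley-parameter triples and lengths is the delicate technical point; everything else is a direct unwinding of the definition of $\K'$.
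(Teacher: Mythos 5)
Your proof of part (1) is essentially the paper's: well-order $|M|$ in type $\lambda$, map $|M|$ onto the nonalgebraic types, obtain the bases from set local character (Fact \ref{forking-props}(\ref{forking-props-2})) enlarged to saturated models of size $\chi_2$, and build the Morley sequences inside $M$ using saturation. One quibble: the bound $|\gS(M)|\le\lambda$ that makes the surjection possible comes from stability in $\lambda$ (ambient, by Hypothesis \ref{ns-hyp} and Fact \ref{ss-stable}), not from saturation of $M$.

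For part (2) the paper's entire proof is ``By Lemma \ref{saturation-charact}'', so your identification of the key lemma is correct, and the mismatch you flag is real: $\K'$ provides a $(\chi_2^+,\chi_2^+,\chi)$-Morley sequence for $q$ over $N_0^{M'}(a)$, a base of size exactly $\chi_2$, while condition (\ref{sat-cond-2}) of that lemma concerns bases $M_0$ of size $<\chi_2$, with parameters tied to $\|M_0\|$. However, your proposed bridge --- extracting a subsequence and ``re-basing'' onto the given $M_0$ via the convergence/basedness machinery of \cite[5.10, 5.20, 5.21]{bv-sat-v3} --- is a step that would fail. Being a Morley sequence \emph{over $M_0$} requires in particular that every witness contain $M_0$, and basedness cannot arrange this: it only produces a small submodel of the \emph{existing} base $N_0^{M'}(a)$ over which the sequence does not syntactically split, so at best you re-base onto yet another model unrelated to $M_0$. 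Manufacturing new witnesses that do contain $M_0$ would require realizing $q$ over ever-larger submodels of $M$, which is precisely the saturation being proved, so that route is circular (or needs a counting argument you do not supply). The correct bridge goes the other way: do not try to verify (\ref{sat-cond-2}) for the pair $(q,M_0)$; instead replay the proof of (\ref{sat-cond-2}) $\Rightarrow$ (\ref{sat-cond-1}) using the base that $\K'$ hands you. Given $p\in\gS(N)$ with $N\lea M$, $\|N\|<\lambda$, extend $p$ to $q\in\gS(M)$ inside the monster model; assume $q$ is nonalgebraic and pick $a$ with $p_a=q$. Then $N_0^{M'}(a)$ is saturated of size $\chi_2$, $q$ does not fork over it, and $\seq{F^{M'}(a,i): i\in I}$ is a $(\chi_2^+,\chi_2^+,\gamma(\chi_2))$-Morley sequence for $q$ over it of length $\lambda$; these are exactly the hypotheses of Fact \ref{average-splitting} with $\mu=\chi_2$ (this is why Notation \ref{notation-4} takes $\chi=\gamma(\chi_2)$). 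Hence $\Av_\chi(\BI/M)=q$, so $\Av_\chi(\BI/N)=p$, and \cite[5.6]{bv-sat-v3} (using stability in unboundedly many cardinals below $\lambda$) yields an element of $\BI$, hence of $M$, realizing $p$. In short, the architecture of your proof matches the paper's, but the one delicate step is resolved by running the cited lemma's \emph{proof} over $N_0^{M'}(a)$, not by forcing its \emph{statement} to apply over $M_0$.
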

\begin{proof} \
  \begin{enumerate}
    \item Let $R^{M'}$ be a well-ordering of $|M|$ of type $\lambda$. Identify $|M|$ with $\lambda$. By stability, we can fix a bijection $p \mapsto a_p$ from $\gS (M)$ onto $|M|$. For each $p \in \gS (M)$ which is not algebraic, fix $N_p \lea M$ saturated such that $p$ does not fork over $N_p$ and $\|N_p\| = \chi_2$. Then use saturation to build $\seq{a_p^i : i < \lambda} \smallfrown \seq{N_p^i : i < \lambda}$ Morley for $p$ over $N_p$ (inside $M$). Let $N_0^{M'} (a_p) := N_p$, $N^{M'} (a_p, i) := N_p^i$, $F^{M'} (a, i) := a_p^i$. For $p$ algebraic, pick $p_0 \in \gS (M)$ nonalgebraic and let $N_0^{M'} (a_p) := N_0^{M'} (a_{p_0})$, $N^{M'} (a_{p_0}) := N^{M'} (a_{p_0})$, $F^{M'} (a_{p}) := F^{M'} (a_{p_0})$.
    \item By Lemma \ref{saturation-charact}.
  \end{enumerate}
\end{proof}

\begin{lem}\label{k'-aec}
  $\K'$ is an AEC with $\LS (\K') = \chi$.
\end{lem}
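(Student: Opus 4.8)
The plan is to verify the axioms of an AEC for $\K'$ one at a time and then compute $\LS (\K')$. Closure under isomorphism and the fact that $\leap{\K'}$ is a partial order extending substructure are immediate from the definition together with the corresponding properties of $\K$ and of $\Ksatp{\chi}$, which is an AEC with $\LS (\Ksatp{\chi}) = \chi$ by Fact \ref{union-sat}. The coherence axiom (if $M_0 \subseteq M_1 \leap{\K'} M_2$ and $M_0 \leap{\K'} M_2$, then $M_0 \leap{\K'} M_1$) reduces to coherence in $\Ksatp{\chi}$ on the $\tau (\K)$-reducts, since the clause $N_0^M (a) = N_0^{M'} (a)$ is manifestly coherent. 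The substance of the proof is therefore the chain axioms and the Löwenheim--Skolem property.

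For the chain axioms, let $\seq{M_j : j < \delta}$ be $\leap{\K'}$-increasing with union $M_\delta$. First I would check $M_\delta \in K'$: condition (1) holds because $\Ksatp{\chi}$ is closed under unions of chains (Fact \ref{union-sat}), condition (2) holds because a union of a coherent chain of linear orders is a linear order, and conditions (3) and (\ref{morley-seq-cond}) follow from Lemma \ref{morley-union}. The point enabling Lemma \ref{morley-union} is that $N_0^{M_j} (a)$ is fixed as $j$ grows (this is built into $\leap{\K'}$), so the base over which nonforking is witnessed does not move; the lemma then yields that $\seq{F^{M_\delta} (a, i) : i \in I_\delta} \smallfrown \seq{N^{M_\delta} (a, i) : i \in I_\delta}$ is Morley for $p_a := \Av_\chi (\seq{F^{M_\delta} (a, i) : i \in I_\delta} / M_\delta)$ over $N_0^{M_\delta} (a)$, and in particular $p_a$ does not fork over $N_0^{M_\delta} (a)$ and restricts on each $M_j$ to the type $p_a^{(j)}$ witnessed in $M_j$ (by uniqueness of nonforking extensions, Fact \ref{forking-props}). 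The only nonroutine part is that $a \mapsto p_a$ is \emph{onto} the non-algebraic types over $M_\delta \rest \tau (\K)$: given such a $q$, chain local character of forking (Fact \ref{union-sat}) produces $j < \delta$ with $q$ not forking over $M_j$; one checks $q \rest M_j$ is non-algebraic, picks (using surjectivity in $M_j$) some $a \in |M_j|$ with $p_a^{(j)} = q \rest M_j$, and concludes $p_a = q$ again by uniqueness of nonforking extensions. Once $M_\delta \in K'$ is established, that each $M_j \leap{\K'} M_\delta$ and that $M_\delta$ is the least upper bound (and absorbs any common upper bound) are straightforward from the definition of $\leap{\K'}$ and smoothness in $\Ksatp{\chi}$.

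For the Löwenheim--Skolem property, the lower bound $\LS (\K') \ge \chi$ is immediate, since every member of $K'$ has a $\chi$-saturated $\tau (\K)$-reduct and hence size at least $\chi$. For the upper bound, given $M \in K'$ and $A \subseteq |M|$ with $|A| \le \chi$, I would build $M_0 \leap{\K'} M$ containing $A$ of size $\chi$ by a closure construction: at each stage (i) take a $\chi$-saturated $\tau (\K)$-substructure of the current reduct of size $\chi$, using $\LS (\Ksatp{\chi}) = \chi$; (ii) for each $a$ already included, throw in $N_0^M (a)$ (of size $\chi_2 < \chi$) together with a length-$\chi$ sub-Morley-sequence of $\seq{F^M (a, i), N^M (a, i) : i \in I}$ and its witnesses; and (iii) for each of the at most $\chi$ (by stability in $\chi$) non-algebraic types over the current reduct, add a witness $a \in |M|$ with $p_a$ extending it. Keeping the $N_0^M (a)$ intact guarantees $N_0^{M_0} (a) = N_0^M (a)$, so that $M_0 \leap{\K'} M$; each stage adds at most $\chi$ elements, so $\|M_0\| = \chi$.

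The step I expect to be the main obstacle is the bookkeeping in this Löwenheim--Skolem construction: one must simultaneously keep the $\tau (\K)$-reduct $\chi$-saturated, ensure the retained sub-Morley-sequences are genuinely $(\chi_2^+, \chi_2^+, \chi)$-Morley (using that a sufficiently long sub-linear-order of a based, convergent Morley sequence, with inherited witnesses, remains Morley), and verify that the restricted map $a \mapsto p_a \rest M_0$ is onto the non-algebraic types over $M_0 \rest \tau (\K)$ --- this last point, exactly as in the chain case, rests on uniqueness of nonforking extensions and on the surjectivity already available in $M$. The surjectivity argument in the chain axiom, which crucially invokes chain local character and hence the no-long-splitting-chains hypothesis of Hypothesis \ref{ns-hyp}, is the other delicate spot.
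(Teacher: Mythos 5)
Your proposal is correct and takes essentially the same approach as the paper: the chain axioms are verified via Fact \ref{union-sat} (unions of $\chi$-saturated models), Lemma \ref{morley-union} for the Morley-sequence condition (your average-based definition of $p_a$ is a cosmetic variant of the paper's construction of $p_a$ by extension and uniqueness of nonforking), and chain local character plus uniqueness for surjectivity of $a \mapsto p_a$. The L\"owenheim--Skolem bound, which the paper dismisses as ``an easy closure argument,'' is fleshed out in your proposal with the right ingredients (keeping the $N_0^M(a)$'s intact, closing under witnesses, and using nonforking extensions plus uniqueness for surjectivity at the union stage).
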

\begin{proof}
  It is straightforward to check that $\K'$ is an abstract class with coherence. Moreover:

  \begin{itemize}
    \item \underline{$\K'$ satisfies the chain axioms}: Let $\seq{M_i : i < \delta}$ be increasing in $\K'$. Let $M_\delta := \bigcup_{i < \delta} M_i$. 
      \begin{itemize}
      \item $M_0 \leap{\K'} M_\delta$, and if $N \geap{\K'} M_i$ for all $i < \delta$, then $N \geap{\K'} M_\delta$: Straightforward.
      \item $M_\delta \in \K'$: $M_\delta \rest \tau (\K)$ is $\chi$-saturated by Fact \ref{union-sat}. Moreover, $R^{M_\delta}$ is clearly a linear ordering of $M_\delta$. Write $I_i$ for the linear ordering $(M_i, R_i)$. Condition \ref{cond-3-k'} in the definition of $\K'$ is also easily checked. We now check Condition \ref{cond-4-k'}.

        Let $a \in |M_\delta|$. Fix $i < \delta$ such that $a \in |M_i|$. Without loss of generality, $i = 0$. By hypothesis, for each $i < \delta$, there exists $p_a^i \in \gS (M_i \rest \tau (\K))$ not algebraic such that $\seq{F^{M_i} (a, j) \mid j \in I_i} \smallfrown \seq{N^{M_i} (a, j) \mid j \in I_i}$ is a Morley sequence for $p_a^i$ over $N_0^{M_i} (a) = N_0^{M_0} (a)$. Clearly, $p_a^i \rest N_0^{M_0} (a) = p_a^0 \rest N_0^{M_0} (a)$ for all $i < \delta$. Moreover by assumption $p_a^i$ does not fork over $N_0^{M_0}$. Thus for all $i < j < \delta$, $p_a^j \rest M_i = p_a^i \rest M_i$. By extension and uniqueness, there exists $p_a \in \gS (M_\delta \rest \tau (\K))$ that does not fork over $N_0^{M_0} (a)$ and we have $p_a \rest M_i = p_a^i$ for all $i < \delta$. Now by Lemma \ref{morley-union}, $\seq{F^{M_\delta} (a, j) \mid j \in I_\delta} \smallfrown \seq{N^{M_\delta} (a, j) \mid j \in I_\delta}$ is a Morley sequence for $p_a$ over $N_0^{M_0} (a)$. 

        Moreover, the map $a \mapsto p_a$ is onto the nonalgebraic Galois types over $M_\delta \rest \tau (\K)$: let $p \in \gS (M_\delta \rest \tau (\K))$ be nonalgebraic. Then there exists $i < \delta$ such that $p$ does not fork over $M_i$. Let $a \in |M_i|$ be such that $\seq{F^{M_i} (a, j) \mid j \in I_i} \smallfrown \seq{N^{M_i} (a, j) \mid j \in I_i}$ is a Morley sequence for $p \rest M_i$ over $N_0^{M_i} (a)$. It is easy to check it is also a Morley sequence for $p$ over $N_0^{M_i} (a)$. By uniqueness of the nonforking extension, we get that the extended Morley sequence is also Morley for $p$, as desired.
      \end{itemize}
    \item \underline{$\LS (\K') = \chi$}: An easy closure argument.
  \end{itemize}
\end{proof}

\begin{thm}\label{strong-solvable}
  $\K$ is uniformly $(\chi, \chi)$-solvable.
\end{thm}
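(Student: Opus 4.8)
The plan is to invoke the equivalent characterization of uniform solvability given by condition (\ref{equiv-cond-3}) of Lemma \ref{solvability-equiv} (with $\mu = \lambda = \chi$), using the auxiliary class $\K'$ just constructed. Lemma \ref{solvability-equiv} reduces uniform $(\chi,\chi)$-solvability to exhibiting an AEC $\K'$ with $\tau(\K') \supseteq \tau(\K)$ and $\LS(\K') \le \chi$ such that (a) $\K'$ has arbitrarily large models, and (b) for \emph{all} $\lambda' \ge \chi$, every $M' \in \K'$ with $\|M'\| = \lambda'$ has the property that $M' \rest \tau(\K)$ lies in $\K$ and is superlimit in $\K$. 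So the proof is essentially an assembly of the three facts already established: Lemma \ref{k'-aec} (that $\K'$ is an AEC with $\LS(\K') = \chi$), Lemma \ref{k'-expansion} (the translation between saturated models of $\K$ and members of $\K'$), and Fact \ref{good-frame-existence} / Fact \ref{union-sat} (that under no long splitting chains the $\chi$-saturated models form a well-behaved superlimit class).

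First I would verify clause (a). By Hypothesis \ref{ns-hyp}, $\K$ has no long splitting chains and is stable in every $\lambda \ge \chi_0'$, so $\K$ has saturated models of every size $\lambda \ge \chi$ (existence of limit/saturated models in each stability cardinal, together with chain-saturation from Fact \ref{union-sat}). Given such a saturated $M \in \K_\lambda$, Lemma \ref{k'-expansion}(1) produces an expansion $M' \in \K'$ of the same cardinality. Hence $\K'$ has arbitrarily large models, which also re-confirms (via Fact \ref{EM-existence}, already used in the proof of Lemma \ref{solvability-equiv}) that the blueprint exists.

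Next I would verify clause (b), the substantive point. Fix $\lambda' \ge \chi$ and $M' \in \K'$ with $\|M'\| = \lambda'$. By Lemma \ref{k'-expansion}(2), $M' \rest \tau(\K)$ is saturated in $\K_{\lambda'}$; in particular it lies in $\K$. It remains to see it is \emph{superlimit} in $\lambda'$. Since $\K$ is stable in $\lambda'$ and $\lambda' \ge \chi > \LS(\K)$, I would apply Fact \ref{good-frame-existence}: under no long splitting chains, for all high-enough $\lambda'$ every $\lambda'$-saturated model of size $\lambda'$ is superlimit, and the union of any increasing chain of $\lambda'$-saturated models is $\lambda'$-saturated (equivalently, invoke Fact \ref{local-implications}(\ref{local-5}) directly, using that the saturated model is unique and that chains of saturated models stay saturated by Fact \ref{union-sat}). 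Either route shows $M' \rest \tau(\K)$ is superlimit, giving (b) for every $\lambda' \ge \chi$, which is exactly the ``uniform'' (all $\lambda' \ge \lambda$) strengthening in the bracketed clauses of Lemma \ref{solvability-equiv}.

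The only real subtlety — and the step I would treat most carefully — is the cardinal bookkeeping ensuring that the conclusions of Fact \ref{good-frame-existence} (which hold for all $\lambda' \ge \lambda_0$ for some $\lambda_0 < H_1$) are already available at the \emph{base} value $\lambda' = \chi$, not merely eventually; this is why $\chi$ was inflated through Notations \ref{notation-1}--\ref{notation-4} so that $\chi$ exceeds the relevant Hanf-type threshold $\lambda_0$. Concretely I would note that $\chi = \gamma(\chi_2) > \chi_2 > (\chi_1')^+$, so Fact \ref{union-sat}(2) applies to give that $\Ksatp{\chi}$ is an AEC, and stability in every $\lambda' \ge \chi_0'$ guarantees the hypotheses of Lemma \ref{k'-expansion} and of superlimit-ness uniformly from $\chi$ onward. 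Modulo this threshold check, everything else is a direct citation, so I expect the proof to be short once the three supporting lemmas are in hand.
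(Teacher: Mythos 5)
Your proposal is correct and follows essentially the same route as the paper's proof: cite Lemma \ref{k'-aec} to get that $\K'$ is an AEC with $\LS(\K') = \chi$, combine Lemma \ref{k'-expansion} with criterion (\ref{equiv-cond-3}) of Lemma \ref{solvability-equiv}, and use Fact \ref{union-sat} (with Fact \ref{local-implications}(\ref{local-5})) to see that for every $\lambda \ge \chi$ a saturated model exists and is superlimit. Your fallback on Fact \ref{union-sat} rather than Fact \ref{good-frame-existence} for the superlimit step is exactly what the paper does, and correctly avoids the threshold issue you flag.
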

\begin{proof}
  By Lemma \ref{k'-aec}, $\K'$ is an AEC with $\LS (\K') = \chi$. Now combine Lemma \ref{k'-expansion} and Lemma \ref{solvability-equiv}. Note that by Fact \ref{union-sat}, for each $\lambda \ge \chi$ there is a saturated model of size $\lambda$, and it is also a superlimit.
\end{proof}

For the convenience of the reader, we give a more quotable version of Theorem \ref{strong-solvable}. For the next results, we drop Hypothesis \ref{ns-hyp}.

\begin{thm}\label{strong-solvable-thm}
  Assume that $\K$ has a monster model, is $\LS (\K)$-tame, and is stable in some cardinal greater than or equal to $\LS (\K)$. There exists $\chi < H_1$ such that for any $\mu \ge \chi$, if $\K$ is stable in $\mu$ and has no long splitting chains in $\mu$ then $\K$ is uniformly $(\mu', \mu')$-solvable, where $\mu' := \left(\beth_{\omega + 2} (\mu)\right)^+$.
\end{thm}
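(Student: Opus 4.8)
The plan is to deduce this from Theorem \ref{strong-solvable} by instantiating the free parameter $\chi_0$ of Section \ref{solvability-sec} in terms of $\mu$. First I would fix, using Fact \ref{stab-op}, a cardinal $\chi^\ast < H_1$ such that $\K$ does not have the $\LS(\K)$-order property of length $\chi^\ast$, and set the threshold $\chi := \max\{\chi^\ast, (2^{\LS(\K)})^+\}$. Since both $\chi^\ast$ and $(2^{\LS(\K)})^+$ lie below $H_1 = \hanfe{\LS(\K)}$, we get $\chi < H_1$, as the statement requires.

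Now let $\mu \ge \chi$ be such that $\K$ is stable in $\mu$ and has no long splitting chains in $\mu$. The idea is to re-run the construction of Section \ref{solvability-sec} with the specific choice $\chi_0 := \mu^+$. I would check that this is a legitimate instance of Notation \ref{notation-1}: $\mu^+$ is regular, $\chi_0 \ge 2^{\LS(\K)}$ (because $\mu \ge \chi \ge 2^{\LS(\K)}$), and $\K$ does not have the $\LS(\K)$-order property of length $\chi_0^+ = \mu^{++}$, since $\mu^{++} \ge \chi \ge \chi^\ast$ and failure of the order property at length $\chi^\ast$ persists at every larger length (restrict a witnessing sequence to its first $\chi^\ast$ terms). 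With this choice, the cardinal $\chi_0' = \gamma(\chi_0)$ satisfies $\chi_0' \ge \mu$, so by Fact \ref{ss-stable} (upward transfer of no long splitting chains from stability and no long splitting chains in $\mu$) the class $\K$ is stable in $\chi_0'$ and has no long splitting chains in $\chi_0'$. Hence Hypothesis \ref{ns-hyp} holds for these parameters, and Theorem \ref{strong-solvable} applies, yielding that $\K$ is uniformly $(\gamma(\chi_2), \gamma(\chi_2))$-solvable, where $\chi_2 = \beth_\omega(\chi_0)$ and $\gamma(\chi_2) = (2^{2^{\chi_2}})^+$ are the cardinals of Notation \ref{notation-3} and Notation \ref{notation-4}.

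It then remains to compute the output cardinal. Here $\chi_2 = \beth_\omega(\mu^+)$, and since $\mu^+ \le 2^\mu = \beth_1(\mu)$ one has $\beth_n(\mu^+) \le \beth_{n+1}(\mu)$ for each $n < \omega$, whence $\beth_\omega(\mu^+) = \beth_\omega(\mu)$. Therefore $\gamma(\chi_2) = (2^{2^{\beth_\omega(\mu)}})^+ = (\beth_{\omega+2}(\mu))^+ = \mu'$, using $2^{\beth_\omega(\mu)} = \beth_{\omega+1}(\mu)$ and $2^{\beth_{\omega+1}(\mu)} = \beth_{\omega+2}(\mu)$. Thus $\K$ is uniformly $(\mu', \mu')$-solvable, as desired.

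The bulk of the work is already carried by Theorem \ref{strong-solvable}; the only genuinely new points to verify are that $\chi_0 = \mu^+$ meets the requirements of Notation \ref{notation-1} and that no long splitting chains transfers from $\mu$ up to $\chi_0'$. I expect the one error-prone step to be the cardinal arithmetic: it is precisely the identity $\beth_\omega(\mu^+) = \beth_\omega(\mu)$ that makes $\gamma(\chi_2)$ come out to \emph{exactly} $\mu' = (\beth_{\omega+2}(\mu))^+$ rather than to something slightly larger, so I would double-check that computation carefully (the passage between the two second-coordinate parameters being trivial once $\gamma(\chi_2) = \mu'$).
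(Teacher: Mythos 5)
Your proposal is correct and follows essentially the same route as the paper: both deduce the result from Theorem \ref{strong-solvable} by fixing, via Fact \ref{stab-op}, a threshold $\chi < H_1$ witnessing failure of the $\LS(\K)$-order property, instantiating the parameter $\chi_0$ of Notation \ref{notation-1} in terms of $\mu$, verifying Hypothesis \ref{ns-hyp} with Fact \ref{ss-stable}, and then computing that the output cardinal of Notation \ref{notation-4} is exactly $\mu'$. The only divergence is that the paper takes $\chi_0 := \mu$ where you take $\chi_0 := \mu^+$; your choice is actually the more careful one, since Notation \ref{notation-1} demands a \emph{regular} $\chi_0$ (which $\mu$ need not be) and a lower bound $\chi_0 \ge 2^{\LS(\K)}$ (which you secure by building $(2^{\LS(\K)})^+$ into the threshold), and your identity $\beth_\omega(\mu^+) = \beth_\omega(\mu)$ correctly shows this adjustment does not change the final cardinal $\mu'$.
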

\begin{proof}
  Hypothesis \ref{global-nf-hyp} holds. Let $\chi < H_1$ be such that $\K$ does not have the $\LS (\K)$-order property of length $\chi$ (see Fact \ref{stab-op}).

  Let $\mu \ge \chi$ be such that $\K$ is stable in $\mu$ and has no long splitting chains in $\mu$. We apply Theorem \ref{strong-solvable} by letting $\chi_0$ in Notation \ref{notation-1} stand for $\mu$ here. By Fact \ref{ss-stable}, $\K$ is stable in $\mu_1$ and has no long splitting chains in $\mu_1$ for every $\mu_1 \ge \mu$, thus Hypothesis \ref{ns-hyp} holds. Moreover $\chi_2$ in Notation \ref{notation-3} corresponds to $\beth_\omega (\mu)$ here, and $\chi$ in Notation \ref{notation-4} corresponds to $\mu'$ here. Thus the result follows from Theorem \ref{strong-solvable}.
\end{proof}

\begin{cor}
  Assume that $\K$ has a monster model and is $\LS (\K)$-tame. If there exists $\mu < H_1$ such that $\K$ is stable in $\mu$ and has no long splitting chains in $\mu$, then there exists $\mu' < H_1$ such that $\K$ is uniformly $(\mu', \mu')$-solvable.
\end{cor}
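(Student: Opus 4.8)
The plan is to derive this as a direct application of Theorem \ref{strong-solvable-thm}, where the only work is bookkeeping with the Hanf number $H_1$. Theorem \ref{strong-solvable-thm} produces a threshold $\chi < H_1$ and applies only to cardinals $\mu \ge \chi$, whereas the given $\mu$ with stability and no long splitting chains may lie below $\chi$. So the single obstacle to overcome is this mismatch, and the tool to resolve it is the upward transfer of superstability, namely Fact \ref{ss-stable}.

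Concretely, I would first invoke Theorem \ref{strong-solvable-thm} (its hypotheses hold, since $\K$ has a monster model, is $\LS(\K)$-tame, and is stable in $\mu \ge \LS(\K)$) to obtain $\chi < H_1$. Set $\mu_0 := \max(\mu, \chi)$; since both $\mu < H_1$ and $\chi < H_1$, we still have $\mu_0 < H_1$. Because $\K$ is stable in $\mu$ and has no long splitting chains in $\mu$, and $\mu \le \mu_0$, Fact \ref{ss-stable} gives that $\K$ is stable in $\mu_0$ and has no long splitting chains in $\mu_0$. Now $\mu_0 \ge \chi$, so Theorem \ref{strong-solvable-thm} applies to $\mu_0$ and yields that $\K$ is uniformly $(\mu_0', \mu_0')$-solvable, where $\mu_0' := \left(\beth_{\omega + 2}(\mu_0)\right)^+$. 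Taking $\mu' := \mu_0'$ then gives uniform $(\mu', \mu')$-solvability.

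The remaining point, and the only one requiring a moment's care, is verifying that $\mu' < H_1$. Here I would use that $H_1 = \hanfe{\LS(\K)} = \beth_{(2^{\LS(\K)})^+}$ has index $(2^{\LS(\K)})^+$, a successor cardinal and hence regular and uncountable; consequently $H_1$ is a strong limit closed under the beth operation. In detail, fix $\beta < (2^{\LS(\K)})^+$ with $\mu_0 \le \beth_\beta$; then $\beth_{\omega + 2}(\mu_0) \le \beth_{\beta + \omega + 2}$, and by regularity of $(2^{\LS(\K)})^+$ we have $\beta + \omega + 3 < (2^{\LS(\K)})^+$, so $\mu' = \left(\beth_{\omega+2}(\mu_0)\right)^+ \le \beth_{\beta + \omega + 3} < \beth_{(2^{\LS(\K)})^+} = H_1$. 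This completes the argument. I do not anticipate any genuine difficulty beyond this arithmetic: the substantive content is entirely carried by Theorem \ref{strong-solvable-thm} and Fact \ref{ss-stable}, and the corollary is essentially a restatement of the theorem with the hypothesis $\mu < H_1$ threaded through the closure properties of $H_1$.
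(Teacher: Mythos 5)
Your proof is correct and is essentially the paper's own argument: the paper likewise fixes $\chi < H_1$ from Theorem \ref{strong-solvable-thm}, assumes without loss of generality that $\mu \le \chi$, transfers stability and no long splitting chains up to $\chi$ via Fact \ref{ss-stable}, and applies the theorem there. The only difference is that you spell out the closure of $H_1 = \beth_{(2^{\LS(\K)})^+}$ under $\lambda \mapsto \left(\beth_{\omega+2}(\lambda)\right)^+$, which the paper leaves implicit; your verification of this point is accurate.
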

\begin{proof}
  Let $\mu < H_1$ be such that $\K$ is stable in $\mu$ and has no long splitting chains in $\mu$. Fix $\chi < H_1$ as given by Theorem \ref{strong-solvable-thm}. Without loss of generality, $\mu \le \chi$. By Fact \ref{ss-stable}, $\K$ is stable in $\chi$ and has no long splitting chains in $\chi$, so apply the conclusion of Theorem \ref{strong-solvable-thm}.
\end{proof}

\section{Superstability below the Hanf number}\label{hanf-section}

In this section, we prove the main corollary. In fact, we prove a stronger version that instead of asking for the properties to hold on a tail asks for them to hold only in a single high-enough cardinal. Toward this end, we start by explaining why no long splitting chains follows from categoricity in a high-enough cardinal. In fact, categoricity can be replaced by solvability. All the ingredients for this result are contained in \cite{shvi635} and this specific form has only appeared recently \cite[Theorem 3]{shvi-notes-v3-toappear}. Note also that Shelah states a similar result in \cite[5.5]{sh394} but his definition of superstability is different.

\begin{fact}[The ZFC Shelah-Villaveces theorem]\label{ns-lc-fact}
  Let $\K$ be an AEC with arbitrarily large models and amalgamation\footnote{In \cite{shvi635}, this is replaced by the generalized continuum hypothesis (GCH).} in $\LS (\K)$. Let $\lambda > \LS (\K)$ be such that $\K_{<\lambda}$ has no maximal models. If $\K$ is $(\lambda, \LS (\K))$-solvable, then $\K$ is stable in $\LS (\K)$ and has no long splitting chains in $\LS (\K)$.
\end{fact}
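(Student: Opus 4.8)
The plan is to adapt the Shelah--Villaveces argument of \cite{shvi635}, using the superlimit models supplied by solvability in place of the categorical model used there, and replacing the GCH hypothesis of \cite{shvi635} by the assumption of amalgamation in $\LS (\K)$. Write $\mu := \LS (\K)$ and fix an EM blueprint $\Phi \in \Upsilon_\mu[\K]$ witnessing $(\lambda, \mu)$-solvability. For any linear order $I$ with $|I| = \lambda$, the reduct $M^\ast_I := \EM_{\tau (\K)} (I, \Phi)$ is a superlimit model of $\K$ of size $\lambda$; thus it is properly universal and closed under unions (of length $< \lambda^+$) of increasing chains of isomorphic copies of itself. The first thing I would extract from this is that $M^\ast_I$ is a $(\lambda, \delta)$-limit model for every limit $\delta < \lambda^+$: one builds the witnessing chain out of copies of $M^\ast_I$, using proper universality to pass from one copy to the next and closure to take the union. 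This limit-model structure at $\lambda$ is the fingerprint of superstability that the argument aims to push down to $\mu$.

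Stability in $\mu$ is the easier half: the existence of a superlimit (equivalently, of a universal limit model) of size $\lambda$ forces $\K$ to be stable in every cardinal in $[\mu, \lambda)$, by the standard order-property argument. Indeed a $\mu$-order property would, through the indiscernibles built into $\Phi$, produce size-$\lambda$ configurations incompatible with a single properly universal limit model. I would simply transcribe this derivation of stability below a solvability (or categoricity) cardinal.

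The substantive part is no long splitting chains in $\mu$. Suppose it fails, so there is a limit $\delta < \mu^+$, an increasing chain $\seq{M_i : i < \delta}$ in $\K_\mu$ with each $M_{i+1}$ universal over $M_i$, and a type $p \in \gS(\bigcup_{i<\delta} M_i)$ that $\mu$-splits over every $M_i$. The heart of \cite{shvi635} is to convert this into a tree: using the splitting witnessed at each level, one constructs a $\leg$-increasing tree $\seq{M_\eta : \eta \in \fct{<\delta}{2}}$ of models in $\K_\mu$ together with elements whose types along incomparable branches $\eta \in \fct{\delta}{2}$ are pairwise contradictory. Using amalgamation in $\mu$ and the absence of maximal models in $\K_{<\lambda}$, I would then amalgamate and resolve this tree upward into a model $N$ of size $\lambda$ that omits a Galois type it would be forced to realize were it superlimit (equivalently, into two size-$\lambda$ limit models that cannot be isomorphic). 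Either outcome contradicts the limit-model structure of $M^\ast_I$ from the first paragraph, and so establishes no long splitting chains in $\mu$.

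The main obstacle is exactly this upward passage. In \cite{shvi635} the step from a splitting tree to a contradiction is run through a weak-diamond/GCH count of pairwise non-isomorphic models; to obtain a ZFC statement I must instead (i) arrange the construction so that a \emph{single} size-$\lambda$ object -- a model omitting the relevant type, or one failing the superlimit closure -- already yields the contradiction, thereby bypassing the cardinal counting, and (ii) feed every amalgamation and extension in the tree and in its ascent to $\lambda$ from amalgamation in $\mu = \LS (\K)$ together with no maximal models in $\K_{<\lambda}$, checking that these local hypotheses suffice with no appeal to GCH. The genuinely delicate point is verifying that solvability delivers, in this amalgamation-poor local setting (no global monster model, amalgamation assumed only in $\LS (\K)$), the same universality-and-saturation leverage that categoricity supplied in \cite{shvi635}; once that is in place, the remainder is bookkeeping along the lines of \cite{shvi635}.
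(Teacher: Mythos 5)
The first thing to note is that this paper does not prove Fact~\ref{ns-lc-fact} at all: it is quoted as a black box, with the ZFC form credited to \cite[Theorem 3]{shvi-notes-v3-toappear} (building on \cite{shvi635}). So your proposal is to be measured against the cited proof, not against an argument internal to the paper. Your outline does name the right ingredients (an EM blueprint $\Phi$ from solvability, universality of the superlimit reduct $\EM_{\tau(\K)}(I,\Phi)$, the Shelah--Villaveces splitting tree, and the need to eliminate the GCH counting), but several load-bearing steps are false or unavailable in this hypothesis-poor setting, and the step you describe as ``bookkeeping'' is exactly the mathematical content of the theorem.

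Concretely: (i) Your ``first extraction''--- that the superlimit $M^\ast_I$ is a $(\lambda,\delta)$-limit model for every limit $\delta<\lambda^+$--- does not follow. Building such a chain requires extensions that are universal \emph{over} prescribed models of cardinality $\lambda$, and producing those requires amalgamation (and more) in $\lambda$; here amalgamation is assumed only in $\LS(\K)$. This is precisely why the paper states the corresponding implication, Fact~\ref{local-implications}(\ref{local-4}), only under a monster-model hypothesis together with stability in $\lambda$. Since your final contradiction is aimed at ``the limit-model structure of $M^\ast_I$,'' the target of the contradiction has not been established and, without amalgamation at $\lambda$, cannot be. (ii) Your stability step invokes ``the standard order-property argument,'' but the equivalence of instability with the order property is a tameness-plus-monster-model result (Fact~\ref{stab-spectrum}); neither hypothesis is available here. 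The argument in \cite{shvi635, shvi-notes-v3-toappear} instead counts Galois types realized inside $\EM_{\tau(\K)}(\lambda,\Phi)$: a well-ordered index set has few cuts, so an EM model over it realizes at most $\LS(\K)+\theta$ types over any submodel of size $\theta$, and a putative instability configuration is transported into the EM model using universality of the superlimit together with iterated amalgamation in $\LS(\K)$ (arranged so every model amalgamated has size exactly $\LS(\K)$). (iii) For the main step, ``amalgamate and resolve this tree upward into a model $N$ of size $\lambda$'' uses amalgamation in cardinals above $\LS(\K)$, which you do not have; and even granting coexistence of the branches, the naive count fails in ZFC, since $2^\delta$ need not exceed $\LS(\K)$ (take $\delta=\omega$ and $\LS(\K)\ge 2^{\aleph_0}$), so ``too many types'' does not follow from the binary tree alone. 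Handling both obstructions--- feeding every construction from amalgamation in $\LS(\K)$ only, and replacing the cardinal count by an argument compatible with ZFC via the EM few-types property--- is the actual achievement of \cite[Theorem 3]{shvi-notes-v3-toappear}; your items (i) and (ii) at the end restate these difficulties rather than resolve them, so the proposal as written has a genuine gap.
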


\begin{cor}\label{ns-lc-cor}
  Let $\K$ be an AEC with a monster model. Let $\lambda > \LS (\K)$. If $\K$ is categorical in $\lambda$, then $\K$ is stable in $\mu$ and has no long splitting chains in $\mu$ for all $\mu \in [\LS (\K), \lambda)$.
\end{cor}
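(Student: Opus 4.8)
The plan is to reduce the statement to the ZFC Shelah--Villaveces theorem (Fact \ref{ns-lc-fact}) by first deriving $(\lambda, \LS (\K))$-solvability from categoricity in $\lambda$, and then to obtain the conclusion throughout the interval $[\LS (\K), \lambda)$ using the upward transfer of no long splitting chains (Fact \ref{ss-stable}).

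First I would record the consequences of the monster model hypothesis: $\K$ has amalgamation, joint embedding, and arbitrarily large models. From joint embedding together with arbitrarily large models one gets that $\K$ has no maximal models (embed any given model jointly with a strictly larger one), so in particular $\K_{<\lambda}$ has no maximal models, matching a hypothesis of Fact \ref{ns-lc-fact}.

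The core of the argument is to show that $\K$ is $(\lambda, \LS (\K))$-solvable. By Fact \ref{EM-existence} I may fix $\Phi \in \Upsilon_{\LS (\K)}[\K]$; for any linear order $I$ with $|I| = \lambda > \LS (\K)$ the model $\EM_{\tau (\K)}(I, \Phi)$ lies in $\K$ and has cardinality exactly $\lambda$. Let $M$ denote the model of cardinality $\lambda$, unique up to isomorphism by categoricity; then every such EM-model is isomorphic to $M$, so it suffices to verify that $M$ is superlimit in the sense of Definition \ref{sl-def}. Membership in $\K_\lambda$ is immediate. For proper universality, given $N \in \K_\lambda$ I would use no maximal models together with downward L\"owenheim--Skolem to produce a proper extension $N^+ \gta N$ still of cardinality $\lambda$; categoricity gives an isomorphism $g \colon N^+ \to M$, and $g \rest N$ then embeds $N$ onto a proper $\lea$-substructure, so $g[N] \lta M$. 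For the chain clause, the union of any $\lea$-increasing chain $\seq{M_i : i < \delta}$ with $\delta < \lambda^+$ and each $M_i \cong M$ belongs to $\K$ by the chain axioms and has cardinality $\lambda$ (since $|\delta| \le \lambda$), hence is isomorphic to $M$ by categoricity. Thus $\Phi$ witnesses $(\lambda, \LS (\K))$-solvability.

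With solvability in hand, Fact \ref{ns-lc-fact} gives that $\K$ is stable and has no long splitting chains in $\LS (\K)$, and Fact \ref{ss-stable} transfers both properties upward to every $\mu \ge \LS (\K)$, in particular to every $\mu \in [\LS (\K), \lambda)$. I expect the main obstacle to be the solvability step, and within it the verification of proper universality of $M$ (which genuinely needs a proper extension of the \emph{same} cardinality, not merely categoricity) together with the bookkeeping ensuring that $\EM_{\tau (\K)}(I, \Phi)$ has cardinality exactly $\lambda$ so that categoricity applies; the two closing invocations are then routine.
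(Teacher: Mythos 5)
Your derivation of $(\lambda, \LS(\K))$-solvability from categoricity is correct, and it is essentially the argument the paper leaves implicit in the remark ``since $\K$ has arbitrarily large models, categoricity in $\lambda$ implies $(\lambda, \LS (\K))$-solvability'': the EM-model over any linear order of cardinality $\lambda$ has cardinality exactly $\lambda$, so by categoricity it is \emph{the} model of cardinality $\lambda$, and that model is superlimit (proper universality via no maximal models plus downward L\"owenheim--Skolem and coherence, the chain clause via categoricity). Your derivation of no maximal models from joint embedding and arbitrarily large models is also fine.

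The gap is in your final step, where you invoke Fact \ref{ss-stable} to transfer stability and no long splitting chains from $\LS(\K)$ upward to every $\mu \in [\LS(\K), \lambda)$. That fact is 10.10 of \cite{indep-aec-apal}, and it requires \emph{tameness}: the paper introduces it with the sentence ``In tame AECs with a monster model, no long splitting chains transfers upward'' (the formal statement here omits the hypothesis, but the cited source has it, and tameness is exactly what drives every known upward transfer of stability and of non-splitting, in the style of Grossberg--VanDieren). Corollary \ref{ns-lc-cor} assumes only a monster model --- no tameness --- so this transfer is not available, and as written your argument only yields the conclusion at $\mu = \LS(\K)$, or else proves a weaker statement with an added tameness hypothesis. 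The paper's proof avoids any transfer: for each $\mu \in [\LS(\K), \lambda)$ separately, it applies Fact \ref{ns-lc-fact} to the AEC $\K_{\ge \mu}$, which has L\"owenheim--Skolem number $\mu$, inherits amalgamation and no maximal models below $\lambda$ from the monster model, and is $(\lambda, \mu)$-solvable by the same categoricity argument you gave; this yields stability and no long splitting chains in each $\mu$ directly. Your proof is repaired by replacing the appeal to Fact \ref{ss-stable} with a repetition of your own solvability argument at each level $\mu$, which is precisely the paper's route.
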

\begin{proof}
  By Fact \ref{ns-lc-fact} applied to $\K_{\ge \mu}$ for each $\mu \in [\LS (\K), \lambda)$. Note that, since $\K$ has arbitrarily large models, categoricity in $\lambda$ implies $(\lambda, \LS (\K))$-solvability.
\end{proof}

We conclude that solvability is equivalent to superstability in the first-order case:

\begin{cor}\label{solvability-fo}
  Let $T$ be a first-order theory and let $\K$ be the AEC of models of $T$ ordered by elementary substructure. Let $\mu \ge |T|$. The following are equivalent:

  \begin{enumerate}
  \item\label{ss-cond-1} $T$ is stable in all $\lambda \ge \mu$.
  \item\label{ss-cond-2} $\K$ is $(\lambda, \mu)$-solvable, for some $\lambda > \mu$.
  \item\label{ss-cond-3} $\K$ is uniformly $(\mu, \mu)$-solvable.
  \end{enumerate}
\end{cor}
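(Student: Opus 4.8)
The plan is to prove the cycle (\ref{ss-cond-3}) $\Rightarrow$ (\ref{ss-cond-2}) $\Rightarrow$ (\ref{ss-cond-1}) $\Rightarrow$ (\ref{ss-cond-3}). Throughout I will use the standard dictionary between $T$ and the AEC $\K = \Mod (T)$ ordered by $\lea \, = \, \prec$: this $\K$ is an $\LS (\K)$-tame AEC with a monster model (amalgamation, joint embedding, and arbitrarily large models all follow from compactness), $\LS (\K) = |T|$, and Galois types coincide with complete syntactic types. In particular $\K$ is stable in $\lambda$ if and only if $T$ is, and (by Fact \ref{fo-superstab}) $T$ is superstable, i.e.\ stable on a tail of cardinals, if and only if $\K$ has no long splitting chains on a tail.

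The implication (\ref{ss-cond-3}) $\Rightarrow$ (\ref{ss-cond-2}) is immediate, since a blueprint witnessing uniform $(\mu, \mu)$-solvability witnesses $(\lambda, \mu)$-solvability for every $\lambda \ge \mu$, in particular for some $\lambda > \mu$. For (\ref{ss-cond-2}) $\Rightarrow$ (\ref{ss-cond-1}), I would pass to the AEC $\K_{\ge \mu}$, which has $\LS (\K_{\ge \mu}) = \mu$, retains a monster model, and has no maximal models below $\lambda$. Any $\Phi$ witnessing $(\lambda, \mu)$-solvability of $\K$ also witnesses it for $\K_{\ge \mu}$, because for $\lambda \ge \mu$ the superlimit models of $\K_\lambda$ and of $(\K_{\ge \mu})_\lambda$ coincide. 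Hence the ZFC Shelah--Villaveces theorem (Fact \ref{ns-lc-fact}) applies to $\K_{\ge \mu}$ with $\LS (\K_{\ge \mu}) = \mu < \lambda$ and yields that $\K_{\ge \mu}$ is stable in $\mu$ and has no long splitting chains in $\mu$. Fact \ref{ss-stable} transfers both properties upward, so $T$ is stable in every $\nu \ge \mu$, which is (\ref{ss-cond-1}).

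The heart of the argument is (\ref{ss-cond-1}) $\Rightarrow$ (\ref{ss-cond-3}). From stability on $[\mu, \infty)$ and Fact \ref{fo-superstab}, $T$ is superstable, so $T$ has a saturated model in every $\lambda \ge \mu$ and the union of any increasing chain of saturated models is again saturated. I would then take $\K'$ to be the class of saturated models of $T$, ordered by $\prec$. The key observation is that $\K'$ is itself an AEC with $\LS (\K') \le \mu$: coherence is inherited from $\prec$; closure under unions of $\prec$-chains is precisely the superstability fact just quoted; and given $A$ inside a saturated $M$, stability in $|A| + \mu$ produces a saturated $N \prec M$ of size $|A| + \mu$ containing $A$, giving the Löwenheim--Skolem bound. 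Every $M \in \K'$ of size $\ge \mu$ is superlimit in the sense of Definition \ref{sl-def}, being saturated (hence properly universal) and closed under isomorphic chains. Since $\tau (\K') = \tau (\K)$ and $\K'$ has arbitrarily large models, $\K'$ verifies the bracketed (``for all $\lambda' \ge \mu$'') form of condition (\ref{equiv-cond-3}) in Lemma \ref{solvability-equiv}, and the implication (\ref{equiv-cond-3}) $\Rightarrow$ (\ref{equiv-cond-1}) there --- which only invokes Fact \ref{EM-existence} --- delivers uniform $(\mu, \mu)$-solvability.

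I expect the only real friction to be in this last step, and it is cardinal bookkeeping rather than anything deep: one must note that (\ref{ss-cond-1}) forces $\mu$ to lie at or above the first stability cardinal of $T$, so that saturated (hence superlimit) models of size exactly $\mu$, and of every larger size, truly exist, and that the class of such models has Löwenheim--Skolem number at most $\mu$. This is exactly where the first-order setting beats the general tame AEC argument of Section \ref{solvability-sec}: there, the lack of compactness forces the auxiliary class to code, for each type, a Morley sequence over a base of size $\chi_2 = \beth_\omega (\chi_0)$, inflating $\LS (\K')$ to $\chi$ and yielding solvability only far above $\mu$, whereas here the finite character of first-order forking ($\kappa (T) = \aleph_0$) collapses these parameters and lets the saturated models serve directly as the witnessing class.
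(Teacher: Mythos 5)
Your implications (\ref{ss-cond-3}) $\Rightarrow$ (\ref{ss-cond-2}) and (\ref{ss-cond-2}) $\Rightarrow$ (\ref{ss-cond-1}) are correct and are essentially the paper's own route: the paper also reduces (\ref{ss-cond-2}) $\Rightarrow$ (\ref{ss-cond-1}) to the ZFC Shelah--Villaveces theorem (Fact \ref{ns-lc-fact} applied to $\K_{\ge \mu}$, which is the content of Corollary \ref{ns-lc-cor}) followed by the upward transfer of Fact \ref{ss-stable}.

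The gap is in (\ref{ss-cond-1}) $\Rightarrow$ (\ref{ss-cond-3}). You take $\K'$ to be the class of saturated models of $T$ ordered by $\prec$ and claim that closure under unions of chains ``is precisely the superstability fact just quoted.'' It is not: Fact \ref{fo-superstab}(2) concerns $\lambda$-saturated models for a \emph{fixed} $\lambda$, whereas membership in your $\K'$ requires each model to be $\|M\|$-saturated, a threshold that grows along the chain. For chains whose length exceeds the cardinality of the models this fails outright. Concretely, let $T$ be the theory of an equivalence relation with infinitely many classes, all infinite ($\omega$-stable, stable in every infinite cardinal, so (\ref{ss-cond-1}) holds with $\mu = \aleph_0 = |T|$). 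Take a strictly increasing $\prec$-chain $\seq{M_i : i < \omega_1}$ of countable saturated models built by only adding new elements to existing classes; every $M_i$, including at limit stages, is saturated, but $\bigcup_{i < \omega_1} M_i$ has size $\aleph_1$ and only $\aleph_0$ classes, hence omits the ``new class'' type over a countable set meeting every class, and so is not saturated. Thus your $\K'$ fails the Tarski--Vaught chain axiom, is not an AEC, and Lemma \ref{solvability-equiv}(\ref{equiv-cond-3}) cannot be applied to it.

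This failure is not cardinal bookkeeping; it is exactly the reason the paper's proof of (\ref{ss-cond-1}) $\Rightarrow$ (\ref{ss-cond-3}) runs ``as in the proof of Theorem \ref{strong-solvable-thm},'' i.e.\ through the construction of Section \ref{solvability-sec}, where models are \emph{expanded} by predicates coding a Morley sequence for each nonalgebraic type. The expanded class is closed under arbitrary unions because the witnessing sequences glue along chains (Lemma \ref{morley-union}), and the presence of long coded sequences forces saturation of the large models in the class (Lemma \ref{saturation-charact}). What the first-order setting buys is not the ability to discard this coding, but the collapse of its cardinal parameters: finite character of forking ($\kappa (T) = \aleph_0$) and bases of size at most $|T|$ let the same construction run with Löwenheim--Skolem number $\mu$, which is how one gets uniform $(\mu, \mu)$-solvability rather than the $(\mu', \mu')$ of Theorem \ref{strong-solvable-thm}. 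To repair your argument you would need to replace the bare class of saturated models by such an expanded class (or otherwise prove chain-closure of whatever class you feed into Lemma \ref{solvability-equiv}), at which point you are reproducing the paper's construction.
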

\begin{proof}[Proof sketch]
  (\ref{ss-cond-3}) implies (\ref{ss-cond-2}) is trivial. (\ref{ss-cond-2}) implies (\ref{ss-cond-1}) is by Corollary \ref{ns-lc-cor} together with Fact \ref{ss-stable}). Finally, (\ref{ss-cond-1}) implies (\ref{ss-cond-3}) is as in the proof of Theorem \ref{strong-solvable-thm}.
\end{proof}

We can also use the ZFC Shelah-Villaveces theorem to prove the following interesting result, showing that the solvability spectrum satisfies an analog of Shelah's categoricity conjecture in tame AECs (Shelah asks what the behavior of the solvability spectrum should be in \cite[Question N.4.4]{shelahaecbook}).

\begin{thm}\label{solv-transfer}
  Assume that $\K$ has a monster model and is $\LS (\K)$-tame. There exists $\chi < H_1$ such that for any $\mu \ge \chi$, if $\K$ is $(\lambda, \mu)$-solvable for \emph{some} $\lambda > \mu$, then $\K$ is uniformly $(\mu', \mu')$-solvable, where $\mu' := \left(\beth_{\omega + 2} (\mu)\right)^+$.
\end{thm}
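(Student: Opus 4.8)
The plan is to reduce to Theorem \ref{strong-solvable-thm} by using the ZFC Shelah--Villaveces theorem (Fact \ref{ns-lc-fact}) to extract stability and no long splitting chains in $\mu$ out of the solvability hypothesis. I would choose $\chi < H_1$ by cases: if $\K$ is stable in some cardinal $\geq \LS (\K)$, let $\chi$ be the threshold produced by Theorem \ref{strong-solvable-thm}; if $\K$ is stable in no such cardinal, let $\chi := \LS (\K)^+$ (or anything below $H_1$), since in that case the solvability hypothesis will turn out to be vacuous. Since $\K$ is fixed, exactly one case applies, so this legitimately produces a single $\chi < H_1$. Now fix $\mu \geq \chi$ and suppose $\K$ is $(\lambda, \mu)$-solvable for some $\lambda > \mu$.

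The core of the argument is to pass to the AEC $\K' := \K_{\geq \mu}$, which has $\LS (\K') = \mu$ and inherits from the monster model of $\K$ both amalgamation in $\mu$ and the absence of maximal models, while $\lambda > \LS (\K')$. The key transfer is that $(\lambda, \mu)$-solvability of $\K$ yields $(\lambda, \LS (\K'))$-solvability of $\K'$: given a blueprint $\Phi$ witnessing $(\lambda,\mu)$-solvability for $\K$, for linear orders $I$ with $|I| \geq \mu$ the reduct $\EM_{\tau (\K)} (I, \Phi)$ already has size $\geq \mu$ and so lies in $\K'$, and the relations of $\lea$-extension, of increasing chains of models of cardinality $\lambda$, and hence of being superlimit in $\lambda$, all agree between $\K$ and $\K'$ for models of size $\lambda \geq \mu$; to meet Definition \ref{em-def} on \emph{all} linear orders one may harmlessly pad $\Phi$ with $\mu$ constant symbols so that even small $I$ produce reducts of size $\geq\mu$ (this keeps $|\tau (\Phi)| \leq \mu$ and does not disturb the superlimit of size $\lambda$). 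Granting this, Fact \ref{ns-lc-fact} applied to $\K'$ gives that $\K'$ is stable in $\mu$ and has no long splitting chains in $\mu$. Because models of size $\mu$, the Galois types over them, $\mu$-splitting, and universal extension all coincide in $\K$ and $\K' = \K_{\geq \mu}$, this transfers back: $\K$ is stable in $\mu$ and has no long splitting chains in $\mu$.

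At this point $\K$ is stable in $\mu \geq \LS (\K)$, so the standing stability hypothesis of Theorem \ref{strong-solvable-thm} is satisfied and $\chi$ is indeed the threshold it provides; applying that theorem to $\K$ (which is $\LS (\K)$-tame with a monster model) yields that $\K$ is uniformly $(\mu', \mu')$-solvable with $\mu' = \left(\beth_{\omega + 2} (\mu)\right)^+$, which is exactly the conclusion. This argument also validates the unstable case of the choice of $\chi$: were $\K$ stable in no cardinal $\geq \LS (\K)$, the previous paragraph would still derive stability of $\K$ in $\mu$ from the solvability hypothesis (Fact \ref{ns-lc-fact} needs no stability input), a contradiction; hence no $\mu \geq \chi$ can satisfy the hypothesis and the implication holds vacuously.

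I expect the main obstacle to be precisely the transfer of solvability between $\K$ and $\K_{\geq \mu}$, and the matching of $(\lambda, \mu)$-solvability of $\K$ with $(\lambda, \LS (\K'))$-solvability of $\K'$ so that Fact \ref{ns-lc-fact} (whose statement fixes $\mu = \LS (\K)$) becomes applicable. The delicate point there is ensuring the witnessing blueprint remains an EM blueprint for the restricted class on \emph{all} linear orders, which the padding above handles. Everything else is bookkeeping: the transfer back of stability and no long splitting chains is immediate since the two classes agree at $\mu$, and the cardinal parameters already match the output of Theorem \ref{strong-solvable-thm}.
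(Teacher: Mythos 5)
Your proof is correct and follows essentially the same route as the paper: apply the ZFC Shelah--Villaveces theorem (Fact \ref{ns-lc-fact}) to $\K_{\ge \mu}$ to extract stability and no long splitting chains in $\mu$ from the solvability hypothesis, then invoke Theorem \ref{strong-solvable-thm}. The only difference is that you spell out two bookkeeping details the paper leaves implicit, namely the padding of the blueprint so that solvability transfers to $\K_{\ge \mu}$, and the case split in the choice of $\chi$ when $\K$ is unstable (which makes the statement vacuous there).
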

\begin{proof}
  Let $\chi < H_1$ be as given by Theorem \ref{strong-solvable-thm}. Let $\mu \ge \chi$ and fix $\lambda > \mu$ such that $\K$ is solvable in $\lambda$. By Fact \ref{ns-lc-fact}, $\K$ is stable in $\mu$ and has no long splitting chains in $\mu$. Now apply Theorem \ref{strong-solvable-thm}.
\end{proof}

We are now ready to prove the stronger version of the main corollary where the properties hold only in a single high-enough cardinal below $H_1$ (but the cardinal may be different for each property).

\begin{cor}\label{main-cor-unbounded}
  Assume that $\K$ has a monster model, is $\LS (\K)$-tame, and is stable in some cardinal greater than or equal to $\LS (\K)$. Then there exists $\chi \in (\LS (\K), H_1)$ such that the following are equivalent:

  \begin{itemize}
  \item[$(\ref{sc0-1})^-$]\label{ssm-1} For some $\lambda_1 \in [\chi, H_1)$, $\K$ is stable in $\lambda_1$ and has no long splitting chains in $\lambda_1$.

  \item[$(\ref{sc0-2})^-$]\label{ssm-2} For some $\lambda_2 \in [\chi, H_1)$, there is a good $\lambda_2$-frame on a skeleton of $\K_{\lambda_2}$.

  \item[$(\ref{sc0-3})^-$]\label{ssm-3} For some $\lambda_3 \in [\chi, H_1)$, $\K$ has a unique limit model of cardinality $\lambda_3$.

  \item[$(\ref{sc0-4})^-$]\label{ssm-4} For some $\lambda_4 \in [\chi, H_1)$, $\K$ is stable in $\lambda_4$ and has a superlimit model of cardinality $\lambda_4$.

  \item[$(\ref{sc0-5})^-$]\label{ssm-5} For some $\lambda_5 \in [\chi, H_1)$, the union of any increasing chain of $\lambda_5$-saturated models is $\lambda_5$-saturated.

  \item[$(\ref{sc0-6})^-$]\label{ssm-6} For some $\lambda_6 \in [\chi, H_1)$, for some $\mu < \lambda_6$, $\K$ is $(\lambda_6, \mu)$-solvable.
  \end{itemize}
\end{cor}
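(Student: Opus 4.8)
The plan is to route all six equivalences through $(\ref{sc0-1})^-$ as a hub: I will show that $(\ref{sc0-1})^-$ implies each of $(\ref{sc0-2})^-$ through $(\ref{sc0-6})^-$ and, conversely, that each of those implies $(\ref{sc0-1})^-$, using the upward transfer of stability together with no long splitting chains (Fact \ref{ss-stable}) to reconcile the fact that the witnessing cardinals $\lambda_\ell$ need not agree. First I fix $\chi_0 \in (\LS(\K), H_1)$ bounding the order property (Fact \ref{stab-op}) and set $\chi := \beth_\omega(\chi_0)$, so that $\LS(\K) < \chi < H_1$. This $\chi$ plays the role of $\chi_2$ in Theorem \ref{ss-from-chainsat} and dominates the threshold of Theorem \ref{strong-solvable-thm}.

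For $(\ref{sc0-1})^-$ implies the rest, suppose $\K$ is stable with no long splitting chains in some $\lambda_1 \in [\chi, H_1)$. By Fact \ref{ss-stable} both properties hold at every $\mu \geq \lambda_1$, and Fact \ref{good-frame-existence} gives a threshold $\lambda_0 < H_1$ beyond which there is a good $\lambda$-frame on the skeleton $\Ksatp{\lambda}_\lambda$, every member of which is superlimit, and unions of increasing chains of $\lambda$-saturated models are $\lambda$-saturated. Since $H_1$ is a limit cardinal, the interval $[\max(\lambda_0, \chi), H_1)$ is nonempty, and any $\lambda$ in it simultaneously witnesses $(\ref{sc0-2})^-$, $(\ref{sc0-4})^-$, and $(\ref{sc0-5})^-$; Fact \ref{good-frame-uq-limit} then gives $(\ref{sc0-3})^-$, and Theorem \ref{strong-solvable-thm} gives uniform $(\mu', \mu')$-solvability with $\mu' = (\beth_{\omega+2}(\lambda_1))^+ < H_1$, from which $(\ref{sc0-6})^-$ follows by choosing any $\lambda_6 \in (\mu', H_1)$.

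For the converses the common target is the hypothesis of Theorem \ref{ss-from-chainsat}: stability at the relevant $\lambda_\ell$, a saturated model of size $\lambda_\ell$, and $\chi$-saturation of every limit model of size $\lambda_\ell$; these yield no long splitting chains in $\lambda_\ell$, hence $(\ref{sc0-1})^-$. Conditions $(\ref{sc0-2})^-$, $(\ref{sc0-3})^-$, and $(\ref{sc0-4})^-$ produce these inputs directly: a good frame, a unique limit model, or a superlimit each forces stability at $\lambda_\ell$ (limit-model existence implies stability) and, via Facts \ref{good-frame-uq-limit} and \ref{local-implications}(\ref{local-3})--(\ref{local-4}), that every limit model of size $\lambda_\ell$ is saturated, hence $\chi$-saturated because $\lambda_\ell \geq \chi$. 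For $(\ref{sc0-6})^-$ I proceed differently, applying the ZFC Shelah--Villaveces theorem (Fact \ref{ns-lc-fact}) to $\K_{\geq \mu}$: the $(\lambda_6, \mu)$-solvability there (with $\mu = \LS(\K_{\geq \mu})$) yields stability and no long splitting chains in $\mu$, which Fact \ref{ss-stable} transfers up into $[\chi, H_1)$.

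The main obstacle is $(\ref{sc0-5})^-$ implies $(\ref{sc0-1})^-$, because, as the introduction warns, $(\ref{sc0-5})$ alone does not give stability at $\lambda_5$. Here I would first exploit the global stability hypothesis and tameness (Fact \ref{stab-spectrum}) to secure a saturated model $M$ of size $\lambda_5$; Fact \ref{local-implications}(\ref{local-5}) then upgrades $(\ref{sc0-5})^-$ to the assertion that $M$ is superlimit, returning us to the situation already handled for $(\ref{sc0-4})^-$. Besides producing this saturated model in the possibly strictly stable regime, the remaining delicate point is the bookkeeping that keeps every auxiliary threshold below $H_1$ and places each intermediate cardinal in $[\chi, H_1)$.
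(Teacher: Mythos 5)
Most of your outline coincides with the paper's own proof: the same hub (no long splitting chains), the same forward implications via Facts \ref{ss-stable}, \ref{good-frame-existence}, \ref{good-frame-uq-limit} and Theorem \ref{strong-solvable-thm}, the same use of Fact \ref{ns-lc-fact} for solvability, and the same common target for the converses (the hypotheses of Theorem \ref{ss-from-chainsat}, which the paper isolates as an auxiliary condition $(\ref{sc0-3})^\ast$). But your treatment of $(\ref{sc0-5})^- \Rightarrow (\ref{sc0-1})^-$ has a genuine gap, and it is exactly at the point the paper flags as delicate. You claim that Fact \ref{stab-spectrum} ``secures a saturated model $M$ of size $\lambda_5$.'' It does not: Fact \ref{stab-spectrum} only yields stability in cardinals $\lambda$ with $\lambda = \lambda^{\mu}$, and $\lambda_5$ need not be one of them. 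Without stability in $\lambda_5$ there is in general no saturated model of cardinality exactly $\lambda_5$ (for instance, if $\lambda_5$ is singular of cofinality $\omega$ and not a stability cardinal, no such model can exist). Moreover, Fact \ref{local-implications} carries the blanket hypothesis that $\K$ is stable in $\lambda$, so clause (\ref{local-5}) cannot be invoked at $\lambda_5$; and your fallback of reducing to the $(\ref{sc0-4})^-$ case is blocked for the same reason, since $(\ref{sc0-4})^-$ explicitly includes stability in $\lambda_4$.

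The paper's fix is to leave $\lambda_5$ entirely: by Fact \ref{stab-spectrum} there are unboundedly many \emph{regular stability} cardinals below $H_1$, so one picks such a $\lambda_3^\ast \in [\lambda_5, H_1)$. There stability holds, so a saturated model of cardinality $\lambda_3^\ast$ exists (Fact \ref{local-implications}(\ref{local-2})), and $(\ref{sc0-5})^-$ is used only to show that every limit model of cardinality $\lambda_3^\ast$ is $\lambda_5$-saturated: build one limit chain whose members are $\lambda_5$-saturated (possible by stability in $\lambda_3^\ast$), apply $(\ref{sc0-5})^-$ to its union, and quote uniqueness of limit models of the same length. Since $\chi \le \lambda_5$, these limit models are $\chi$-saturated, so Theorem \ref{ss-from-chainsat} applies at $\lambda_3^\ast$ and gives $(\ref{sc0-1})^-$. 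Your proof becomes correct once this step replaces your attempt to manufacture a saturated model at $\lambda_5$ itself.
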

\begin{remark}\label{type-full-rmk}
  In $(\ref{sc0-2})^-$, we do \emph{not} assume that the good frame is type-full (i.e.\ it may be that there exists some nonalgebraic types which are not basic, so fork over their domain). However if $(\ref{sc0-1})^-$ holds, then the proof of $(\ref{sc0-1})^-$ implies $(\ref{sc0-2})^-$ (Fact \ref{good-frame-existence}) actually builds a \emph{type-full} frame. Therefore, in the presence of tameness, the existence of a good frame implies the existence of a \emph{type-full} good frame (in a potentially much higher cardinal, and over a different class).
\end{remark}

\begin{proof}[Proof of Corollary \ref{main-cor-unbounded}]
  By Fact \ref{stab-spectrum}, $\K$ does not have the $\LS (\K)$-order property. By Fact \ref{stab-op}, there exists $\chi_0 < H_1$ such that $\K$ does not have the $\LS (\K)$-order property of length $\chi_0$. Let $\chi := \beth_{\omega}\left(\chi_0 + \LS (\K)\right)$.

  We will use the following auxiliary condition, which is a weakening of $(\ref{sc0-3})^-$ (the problem is that we do not quite know that $(\ref{sc0-5})^{-}$ implies $(\ref{sc0-3})^-$ as $\K$ might not be stable in $\lambda_5$):

  \begin{itemize}
      \item[$(\ref{sc0-3})^\ast$] For some $\lambda_3^\ast \in [\chi, H_1)$, $\K$ is stable in $\lambda_3^\ast$, has a saturated model of cardinality $\lambda_3^\ast$, and every limit model of cardinality $\lambda_3^\ast$ is $\chi$-saturated.
  \end{itemize}

  We will prove the following claims, which put together give us what we want:

  \underline{Claim 1}: $(\ref{sc0-1})^- \Leftrightarrow (\ref{sc0-6})^-$.

  \underline{Claim 2}: $(\ref{sc0-3})^\ast \Rightarrow (\ref{sc0-1})^-$.
  
  \underline{Claim 3}: For $\ell \in \{1,2, 3, 4, 5\}$, $(\ell)^- \Rightarrow (\ref{sc0-3})^\ast$.

  \underline{Proof of Claim 1}: By Theorem \ref{strong-solvable-thm} and Fact \ref{ns-lc-fact}. $\dagger_{\text{Claim 1}}$

  \underline{Proof of Claim 2}: This is Theorem \ref{ss-from-chainsat}, where $\chi_2$ there stands for $\chi$ here. $\dagger_{\text{Claim 2}}$

  \underline{Proof of Claim 3}: It is enough to prove the following subclaims:
  \begin{itemize}
    \item[] \underline{Subclaim 1}: $(\ref{sc0-1})^- \Rightarrow (\ref{sc0-2})^- \Rightarrow (\ref{sc0-3})^-$.
      
    \item[] \underline{Subclaim 2}: $(\ref{sc0-4})^- \Rightarrow (\ref{sc0-3})^-$.

    \item[] \underline{Subclaim 3}: $(\ref{sc0-3})^- \Rightarrow (\ref{sc0-3})^\ast$.

    \item[] \underline{Subclaim 4}: $(\ref{sc0-5})^- \Rightarrow (\ref{sc0-3})^\ast$.

  \end{itemize}
  \begin{itemize}
  \item[] \underline{Proof of Subclaim 1}: By Fact \ref{good-frame-existence}. $\dagger_{\text{Subclaim 1}}$
  \item[] \underline{Proof of Subclaim 2}: By Fact \ref{local-implications}(\ref{local-4}). $\dagger_{\text{Subclaim 2}}$
  \item[] \underline{Proof of Subclaim 3}: By Fact \ref{local-implications}(\ref{local-3}). $\dagger_{\text{Subclaim 3}}$
  \item[] \underline{Proof of Subclaim 4}: Let $\lambda_3^\ast \in [\lambda_5, H_1)$ be a regular stability cardinal. Then $\K$ has a saturated model of cardinality $\lambda_3^\ast$, and from $(\ref{sc0-5})^-$ it is easy to see that any limit model of cardinality $\lambda_3^\ast$ is $\lambda_5$-saturated, hence $\chi$-saturated. $\dagger_{\text{Subclaim 4}}$
  \end{itemize}
\end{proof}

We can now prove the main result of this paper (Corollary \ref{main-cor}):

\begin{proof}[Proof of Corollary \ref{main-cor}]
  Let $\chi$ be as given by Corollary \ref{main-cor-unbounded}. By Fact \ref{stab-spectrum}, there exists unboundedly-many regular stability cardinals in $(\chi, H_1)$. This implies that for $\ell \in \{1, 2, 3, 4, 5, 6\}$, $(\ell)$ (from Corollary \ref{main-cor}) implies $(\ell)^-$ (from Corollary \ref{main-cor-unbounded}). Moreover $(\ref{sc0-1})^-$ implies both (\ref{sc0-1}) and (\ref{sc0-7}) by Fact \ref{ss-stable}. Since Corollary \ref{main-cor-unbounded} tells us that $(\ell_1)^-$ is equivalent to $(\ell_2)^-$ for $\ell_1, \ell_2 \in \{1, 2, 3, 4, 5 ,6\}$, it follows that $(\ell_1)$ is equivalent to $(\ell_2)$ as well, and (\ref{sc0-7}) is implied by any of these conditions. 
\end{proof}

\begin{question}\label{superlimit-question}
  Is stability in $\lambda_4$ needed in condition $(\ref{sc0-4})^-$ of Corollary \ref{main-cor-unbounded}? That is, can one replace the condition with:

  \begin{itemize}
    \item[$(\ref{sc0-4})^{--}$] For some $\lambda_4 \in [\chi, \theta)$, $\K$ has a superlimit model of cardinality $\lambda_4$.
  \end{itemize}
\end{question}

The answer is positive when $\K$ is an elementary class \cite[ 3.1]{sh868}.

\section{Future work}\label{end-section}

While we managed to prove that some analogs of the conditions in Fact \ref{fo-superstab} are equivalent, much remains to be done.

For example, one may want to make precise what the analog to (5) and (6) in \ref{fo-superstab} should be in tame AECs. One possible definition for (6) could be:

\begin{defin} Let $\lambda, \mu > \LS (\K)$.  We say that $\K$ has the \emph{$(\lambda,\mu)$-tree property} provided there exists $\{p_n(\x;\y_n) \mid n<\omega\}$ Galois-types over models of size less than $\mu$ and $\{M_\eta \mid \eta\in \sq{\leq\omega}\lambda\}$ such that for all $n<\omega, \nu\in\sq{n}\lambda$ and every $\eta\in\sq{\omega}\lambda$:
\[
\langle M_\eta,M_\nu\rangle\models p_n \iff \nu \text{ is an initial segment of }\eta.  
\]

We say that $\K$ has the \emph{tree property} if it has it for all high-enough $\mu$ and all high-enough $\lambda$ (where the ``high-enough'' quantifier on $\lambda$ can depend on $\mu$).
\end{defin}

We can ask whether no long splitting chains (or any other reasonable definition of superstability) implies that $\K$ does not have the tree property, or at least obtain many models from the tree property as in \cite{grsh238}. This is conjectured in \cite{sh394} (see the remark after Claim 5.5 there).

As for the D-rank in \ref{fo-superstab}(5), perhaps a simpler analog would be the $U$-rank defined in terms of $(<\kappa)$-satisfiability in \cite[7.2]{bg-v11-toappear} (another candidate for a rank is Lieberman's $R$-rank, see \cite{liebermanrank}). By \cite[7.9]{bg-v11-toappear}, no long splitting chains implies that the $U$-rank is bounded but we do not know how to prove the converse. Perhaps it is possible to show that $U[p] = \infty$ implies the tree property.

\bibliographystyle{amsalpha}
\bibliography{superstab-defs}

\end{document}